\newtheorem{theorem}{Theorem}
\newtheorem{definition}{Definition}
\newtheorem{lem}{Lemma}
\newtheorem{corol}{Corollary}
\newtheorem{remark}[theorem]{Remark}
\def \R{\mathbb{R}}
\def\my_c{c_\infty}
\def \ltheta{\overleftarrow{\theta}}
\def \rtheta{\overrightarrow{\theta}}
\newcommand{\mynewtheorem}[2]{
	\newaliascnt{#1}{dummy}
	\newtheorem{#1}[#1]{#2}
	\aliascntresetthe{#1}
	\expandafter\def\csname #1autorefname\endcsname{#2}
}
\newcommand{\be}{\begin{equation}}
\newcommand{\ee}{\end{equation}}
\newcommand{\bde}{\begin{displaymath}}
\newcommand{\ede}{\end{displaymath}}
\newcommand{\beq}{\begin{eqnarray*}}
\newcommand{\eeq}{\end{eqnarray*}}
\newcommand{\beqa}{\begin{eqnarray}}
\newcommand{\eeqa}{\end{eqnarray}}
\newcommand{\bel }{\left\{\begin{array}{ll}}
	\newcommand{\eel}{\cr \end{array} \right.}
\newcommand{\seq}[1]{{\lbrace #1 \rbrace}}
\newcommand{\dcb}{\begin{array}{lll}}
	\newcommand{\dce}{\end{array}}
\newcommand{\ebe}{\begin{enumerate}\setlength{\baselineskip}{13pt}\setlength{\parskip}{0pt}}
	\newcommand{\dbe}{\end{enumerate}}
\newcommand{\E}{\mathcal{E}}
\def\F{{\cal F}}
\def\rr{{\mathbb R}}
\def\P{{\mathbb P}}
\def\I{\mathsf{1}}
\newcommand{\leftrightharpoonup}{%
	\mathrel{\mathpalette\lrhup\relax}%
}
\newcommand{\lrhup}[2]{%
	\ooalign{$#1\leftharpoonup$\cr$#1\rightharpoonup$\cr}%
}
\newcommand\rharp[1]{\mathstrut\mkern2.5mu#1\mkern-11mu\raise1.2ex%
	\hbox{$\scriptscriptstyle\rightharpoonup$}}
\newcommand\lharp[1]{\mathstrut\mkern2.5mu#1\mkern-11mu\raise1.2ex%
	\hbox{$\scriptscriptstyle\leftharpoonup$}}
\newcommand\lrharp[1]{\mathstrut\mkern2.5mu#1\mkern-11mu\raise1.2ex%
	\hbox{$\scriptscriptstyle\leftrightharpoonup$}}
\newcommand \A[1]{{\bf (#1)}}
\def\F{{\mathcal F}}
\def\R{{\mathbb{R}} }
\def\N{{\mathbb{N}} }
\def\E{{\mathbb{E}}  }
\def\P{{\mathbb{P}}  }
\def\I{{\mathbf{1}}}
\def\bint#1^#2{\displaystyle{\int_{#1}^{#2}}}
\def\bsum#1^#2{\displaystyle{\sum_{#1}^{#2}}}
\def\xdt_#1{X_#1(\Delta t)}
\def\0{{\mathbf{0}}}
\begin{document}

\title[IBP formula for killed process]{Integration by parts formula for killed processes: A point of view from approximation theory}

\author{Noufel Frikha}
\address{Noufel Frikha, Universit\'e de Paris, Laboratoire de Probabilit\'es, Statistiques et Mod\'elisation, F-75013 Paris, France}
\email{frikha@math.univ-paris-diderot.fr}


\author{Arturo Kohatsu-Higa}
\address{Arturo Kohatsu-Higa\footnote{This author was supported by grants of the Japanese government KAKENHI 16K05215 and 16H03642},
Department of Mathematical Sciences
Ritsumeikan University
1-1-1 Nojihigashi, Kusatsu, Shiga, 525-8577, Japan}
\email{khts00@fc.ritsumei.ac.jp}

\author{Libo Li}
\address{Libo Li, Department of Mathematics and Statistics, University of New South Wales, Sydney, Australia}
\email{libo.li@unsw.edu.au }

\subjclass[2010]{Primary:60H07 }
\keywords{Expansions, Stochastic Differential Equations, Killed process, Integration by Parts, Monte Carlo simulation}

\begin{abstract}
In this paper, we establish a probabilistic representation for two integration by parts formulas, one being of Bismut-Elworthy-Li's type, for the marginal law of a one-dimensional diffusion process killed at a given level. These formulas are established by combining a Markovian perturbation argument with a tailor-made Malliavin calculus for the underlying Markov chain structure involved in the probabilistic representation of the original marginal law. Among other applications, an unbiased Monte Carlo path simulation method for both integration by parts formula stems from the previous probabilistic representations. 
\end{abstract}
\maketitle

\section{Introduction}

In this article, we consider the following one-dimensional stochastic differential equation (SDE in short)
\begin{equation}
\label{sde:dynamics}
X_t = x + \int_0^{t} b(X_s) ds + \int_{0}^t \sigma(X_s) dW_s, \, x\in \rr
\end{equation}

\noindent where the coefficients $b, \, \sigma: \rr \rightarrow \rr $ are smooth and bounded functions and $(W_t)_{t \geq 0}$ stands for a one-dimensional Brownian motion on a given filtered probability space $(\Omega,\F, (\F_t)_{t\ge 0},\P) $. 

The aim of this paper is to provide a probabilistic representation for two integration by parts (IBP) formulas for the marginal law of the process $X$ killed at a fixed given level $L$. To be more specific, for a starting point $x\geq  L$, let $\tau = \inf\seq{t \geq 0:  X_t < L} $ be the first hitting time of the level $L$ by the one-dimensional process $X$. For a given finite horizon $T>0$, we are interested in establishing probabilistic representations for IBP formulae related to the following quantities
\begin{align}
\E[f'(X_T) \I_\seq{\tau >T}] \quad \mbox{ and } \quad \partial_x \E[f(X_T) \I_\seq{\tau >T}] \label{the:two:ibp:formulas}
\end{align}

\noindent where $f$ is a real-valued smooth function defined on $[L, \infty)$. Extensions to non-smooth functions or to the transition density of the killed process are also obtained.

In the recent past, IBP formulae have raised a lot of interest as these explicit formulae can be further analyzed to obtain properties of densities or for Monte Carlo simulation among other applications, see Nualart \cite{Nuabook} or Malliavin and Thalmaier \cite{malliavinT}. The former quantity in \eqref{the:two:ibp:formulas} is commonly considered in the literature on Malliavin calculus, while the latter quantity is referred in the literature to as the Bismut-Elworthy-Li (BEL for short) formula. The BEL formula is of interest for many practical applications such as the numerical computation of price sensitivities in finance for Delta hedging purpose. For a more detailed discussion on this topic, we refer the interested reader to Fourni\'e and al. \cite{fournie:1}, Fourni\'e and al. \cite{fournie:2}, Gobet et al. \cite{Gobet:Kohatsu:1} \cite{Gobet:Kohatsu:2} for a short sample.

In particular, in Section 2.6 of \cite{malliavinT}, the authors propose a continuous time version of the IBP formula for $d$-dimensional diffusion process $X$ killed when it exits an open sub-domain $ D $ of $\rr^d$. Denoting by $\tau$ the first exit time of $X$ from $D$, the formula writes $\partial_x \E[f(X_T) \I_\seq{\tau>T}]=\E[f(X_T)\I_\seq{\tau>T}H]$ where $ H $ has an explicit expression using stochastic integrals. Extensions have also been proposed by Arnaudon and Thalmaier \cite{ADT}, \cite{AT}. At this stage, it is important to observe that, from a numerical perspective, these formulae will inevitably involve a time discretization, thus introducing a bias, in order to devise a Monte Carlo simulation method, as it is already the case for the quantity $\E[f(X_T) \I_\seq{\tau>T}]$, see e.g. Gobet \cite{Gobet:2000}, Gobet and Menozzi \cite{gobe:meno:spa:04}. As observed in \cite{AT}, it may also require to compute the solution of a control problem which can be done off-line. One may thus claim as stated at the beginning of Section 2.6 in \cite{malliavinT}: ``Its Monte-Carlo implementation, which has not yet been done, seems to be relatively expensive in computing time''. 

Our approach is probabilistic and relies on a perturbation argument of Markov semigroups to derive a probabilistic representation for the marginal law of the killed process based on a simple Markov chain approximation scheme for which we develop an appropriate Malliavin calculus machinery. 

The main novelty in comparison with the aforementioned previous works lies in the fact that an unbiased Monte Carlo simulation method directly stems from the integration by parts formulae derived here. One may thus devise an estimator which does not involve any bias but only a statistical error. To the best of our knowledge, this feature appears to be new. As a by product of our analysis, we propose a probabilistic representation for the two derivatives $\partial_x p(T, x, z)$ and $\partial_z p(T, x, z)$ where $(0,\infty) \times [L,\infty)^2 \ni (T, x, z)\mapsto p(T, x, z)$ stands for the transition density evaluated at terminal point $z$ at time $T$ of the process $X$ starting from $x$ and killed at level $L$. We also point out that devising a Monte Carlo estimator for IBP formulae without introducing a bias from the exact simulation methods of Jenkins \cite{Jenkins} or Herrmann and Zucca \cite{Herrmann} does not seem to be apparent. An extension of the exact simulation method introduced by Beskos and al. \cite{beskos2006} to compute the two first derivatives with respect to the starting point $x$ of the quantity $ \E[f(X^{x}_T)]$, $X$ being a one-dimensional diffusion with constant diffusion coefficient, has been recently proposed by Tanr\'e and Reutenauer \cite{reutenauer:tanre}. However, it seems difficult to implement from this method a Monte Carlo estimator for the aforementioned derivatives of the transition density of the diffusion without introducing any bias. 

The first step towards obtaining an IBP formula is to prove a probabilistic representation for the marginal law of the killed diffusion process, in the spirit of Bally and Kohatsu-Higa \cite{BK} which developed such a formula for multi-dimensional diffusion processes (without stopping) and some L\'evy driven SDEs by means of a probabilistic perturbation argument for Markov semigroups. We also refer the reader to Labord\`ere et al. \cite{henry-labordere2017} and Agarwal and Gobet \cite{GobetA} for some recent contributions in that direction for multi-dimensional diffusion processes.

 Such representation involves a simple Markov chain structure evolving along a time grid given by the jump times of an independent renewal process. A similar representation was derived by the same authors in \cite{FKL1} by means of analytic arguments. However, the representation obtained here is different and more amenable for the implementation of Monte Carlo simulation methods or to establish IBP formulae. 

Once such representation is established, we then want to prove suitable IBP formulae using the underlying Markov chain structure and eventually the noise provided by the jump times. For this purpose, we set up a tailor-made Malliavin calculus for this new approximation process and perform a careful propagation argument of spatial derivatives, backward in time for the first quantity in \eqref{the:two:ibp:formulas} and forward in time for the second quantity in \eqref{the:two:ibp:formulas}.

These developments are not free of mathematical hurdles. In fact, the proposed methodology leads to the appearance of boundary terms which have to be treated carefully. The key idea that we develop to deal with this issue consists in using the noise provided by the jump times. Contrary to the IBP formulae developed here, we point out that in most cases the explicitness of the IBP formulae for diffusions killed at a boundary demands a number of simplifications and approximations. A technical argument commonly used consists in performing a localisation of the underlying process in order to ensure that it is not close to the boundary. This technique is successful but has some theoretical and practical limitations as shown in Delarue \cite{Delarue}, \cite{Gobet:Kohatsu:1} and Nakatsu \cite{Nakatsu2}. This is one of the main reasons for the previously quoted statement in Section 2.6 of \cite{malliavinT}.

We finally emphasize that the variance of the Monte Carlo estimators associated to the IBP formulae established here tends to be large and even infinite. This feature is not new and appears to be reminiscent of the probabilistic representation originally obtained in \cite{BK}. Importance sampling or higher order methods have been proposed to circumvent this issue in the case of multi-dimensional diffusions, see Andersson and Kohatsu-Higa \cite{APKA} and more recently Andersson et al. \cite{AKY}. Following the ideas developed in \cite{AKY}, we show how to achieve finite variance for the Monte Carlo estimators obtained from the probabilistic representation formulas of the marginal law of the killed process and of both IBP formulas by employing an importance sampling scheme on the jump times of the renewal process. We finally provide some numerical tests illustrating our previous analysis.

The article is organized as follows. In Section \ref{preliminaries:sec}, we provide some basic definitions, assumptions and present a reflection principle based on a simple one step Markov chain that will play a central role in our probabilistic representations for the marginal law of the killed process and for our IBP formulae. In addition, we also construct the adequate Malliavin calculus machinery related to the underlying Markov chain upon which both IBP formulae are made. In Section \ref{prob:repres:sec}, the probabilistic representation for the marginal law of the killed process, based on the Markov chain of Section \ref{preliminaries:sec}, is established. The change from the process $X$ to the Markov chain coming from the reflection principle of Section \ref{preliminaries:sec} simplifies our analysis as all the irregularities of the process appear as indicator functions. 
Section \ref{sec:ingr} is devoted to the main ingredients to obtain our first IBP formula, that is, the IBP formula for the quantity $\E[f'(X_T) \I_\seq{\tau >T}]$.  

These ingredients are put to work in Section \ref{sec:!IBP}. Theorem \ref{prop:5.2} is the main result of this section. As a by product, we obtain a probabilistic representation for the first derivative of the transition density of the killed process with respect to its terminal point in Corollary \ref{cor:ibp:backward}. In Section \ref{BEL:sec}, we establish the BEL formula for the law of the killed process. The main result of this section is Theorem \ref{thm:forward:ibp} and, as a by product, we obtain a probabilistic representation for the first derivative of the transition density of the killed process with respect to the initial condition $x$ in Corollary \ref{cor:ibp:forward}. Many of the proofs of Sections \ref{prob:repres:sec} and \ref{sec:ingr} are technical and postponed to the appendix in Section \ref{app:sec}. In Section \ref{importance:sampling:section}, we show how to achieve finite variance for our unbiased Monte Carlo estimators by an importance sampling technique that we briefly present. Some numerical results are presented in Section \ref{sec:7}. Clearly, one needs to study numerical issues in more detail and these are left for later studies.

\subsection*{Notations }\label{mal:calculus}
For a fixed given point $z\in \rr$, the Dirac measure is denoted by $\delta_z(dx)$. Derivatives may be denoted by $f'(x) $ in the one-dimensional case or by $ \partial_if (x)$ in the multi-dimensional case for the partial derivative with respect to the $ i $-th variable appearing in the multivariate function $ f $ or also by $ \partial^i_xf(x)\equiv f^{(i) }(x)$ where the latest is used in functions of one variable and the former is used mostly for multivariate functions. 

We will often work with continuous or smooth functions defined on $ [L,\infty) $. In order to shorten notation, we will consider their extensions\footnote{In most cases, unless explicitly said we assume that functions are extended using e.g. Whitney's extension theorem.} on $ \mathbb{R} $. This is done just in order to shorten the length of equations and notation. Therefore all statements can be rewritten using the same assumptions but restricted to the domain $ [L,\infty) $.

The space of functions which are $ k $-times continuously differentiable on a closed domain $ D $  is denoted by $ \mathscr{C}^k(D) $. At the boundary points of $ D $, all derivatives are considered as limits taken from the interior of $ D $ only. In the case that the derivatives are bounded, the space of corresponding functions is denoted by $\mathscr{C}^k_b(D) $, $ k\in\mathbb{N}\cup\{0,\infty\} $. In particular, note that functions in $\mathscr{C}^1_b $ may not be bounded but are at most linearly growing. Finally, $  \mathscr{C}^k_p(D) $  denote the class of $ k $-times differentiable functions with at most polynomial growth at infinity. The space of $ p $-integrable random variables (r.v.'s) is denoted by $ \mathbb{L}^p $ with its extension $ \mathbb{L}^\infty $ for $p=\infty $. 

Given a measure space $S$, the space of real valued Borel measurable functions on $S$ will be denoted by $\mathcal{M}(S)$. We also introduce the simplex $ A_{n}:=\{t\in (0,T]^n;0<t_1<\cdots<t_n\leq T\}$, $ n\in\mathbb{N} $ where $T$ is fixed throughout the paper.

 The transition density function at $ x $ of the standard Brownian motion at time $t$ is denoted by $g(t, x) = (2\pi t )^{-1/2}\exp(-x^2/(2t))$. Its associated Hermite polynomials of order $i$, $i \in \mathbb{N}$, are defined by $\mathcal{H}_i(t, x)= (g( t, x))^{-1} \partial^{i}_x g(t,x)$.

%
%
%
%
%



In order to simplify lengthy equation, we may use the symbol $ \stackrel{\E}{=} $ to mean that two quantities are equal in expectation. Sometimes the same symbol maybe used for equality on conditional expectation. This will be clearly indicated at the point where it is used.

Generic constants are usually denoted by $ C $ and are independent of all variables unless otherwise explicitly stated. As usual they may change value from one line to the next.


As we will be using discrete Markov chains in this article, we will often have indexes whose range may be the set of integers. In order to shorten the length of statements, we will use the following notation for the most common set of indexes $ \mathbb{N}_n\equiv \{1,...,n\}$ or $ \bar{\mathbb{N}}_n \equiv \{0,...,n\}$ with $ n\in\bar{\mathbb{N}}\equiv\mathbb{N}\cup\{0\} $. In the case that $ n \leq 0 $, then $ \mathbb{N}_n:=\emptyset $.

\section{Preliminaries}\label{preliminaries:sec}

\subsection{Assumptions and basic definitions}
Throughout the article, we work on a probability space $(\Omega, \mathcal{F}, \mathbb{P})$ which is assumed to be rich enough to support all r.v's considered in what follows. In addition, we will work under the following assumptions on the coefficients:
\subsection*{Assumption (H)}
\begin{itemize}
	\item[(i)] The coefficients of the SDE \eqref{sde:dynamics} are smooth and bounded, in particular, $b\in \mathscr{C}^{\infty}_b(\rr)$ and $ a \in \mathscr{C}^{\infty}_b(\rr)$. 
	\item[(ii)] The function $\sigma$ is bounded and uniformly elliptic, that is, there exist $\underline{a}, \overline{a}>0$ such that for any $x\in \rr$, $\underline{a} \leq a(x)=\sigma^2(x) \leq \overline{a}$.
	Therefore, without loss of generality, we will assume that $ \sigma(x)>0 $.
\end{itemize}

\subsection{A reflection principle}
Our probabilistic representation involves the following approximation process 
\begin{align}
\label{eq:defbX}
\bar{X}^{s, x}_{t}\equiv \bar{X}^{s, x}_{t}(\rho) = \rho x+(1-\rho)(2L-x)+\sigma(x)(W_t-W_s),
\end{align} 

\noindent where $ \rho $ is a r.v. distributed according to a Bernoulli(1/2) law, independent of $ W $, namely $\P(\rho=1) = \P(\rho=0) = 1/2 $. Under our assumption on the coefficients, the flow derivatives of $\bar{X}^{s, x}_t$ exist. In particular, one has
\begin{align}
\label{eq:fd}	\partial_{x}\bar X^{s,x}_{t}=&2\rho-1+\sigma'(x){(W_t-W_s)},	\quad \partial^2_{{x}}\bar X^{s,x}_{t}=
\sigma''(x){(W_t-W_s)}.
\end{align}

 In the particular case that $ s=0 $, we may use the simplified form $ \bar{X}_t=\bar{X}^{0,x}_{t} $. At this point, we give a brief explanation about how the approximation process $ \bar{X} $ appears in the forthcoming probabilistic representation. The proof of the following lemma is straightforward by using the reflection principle, see e.g. Karatzas and Shreve \cite{Karatzas1991}.
\begin{lem} 
	\label{lem:1}Define the following approximation process:
	\begin{align*}
	\bar{Y}_t=x+\sigma(x)W_t, \quad x\geq L,
	\end{align*}
together with its associated exit time $ \bar{\tau}:=\inf\left\{t,\bar{Y}_t=L\right\} $. Then, for any bounded measurable function $f$, the following property is satisfied:
		\begin{align}
	\E\left[f(\bar{Y}_T)\I_\seq{\bar{\tau} >T}\right]=&
	\E\left[f(\bar{Y}_T)\I_\seq{\bar{Y}_T\geq L}\right]-
	\E\left[f(2L-\bar{Y}_T)\I_\seq{\bar{Y}_T<L}\right] = 2\E\left[(2\rho-1)f(\bar{X}_T)\I_\seq{\bar{X}_T\geq L}\right].\label{eq:refa}
	\end{align}
\end{lem}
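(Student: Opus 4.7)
The plan is to split the claim into its two equalities and treat each separately. The first equality is the classical reflection principle and the second is a direct computation using the definition of $\bar{X}_T$ together with the symmetry $W_T \stackrel{d}{=} -W_T$.

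For the first equality, I would write $\E[f(\bar Y_T)\I_{\bar\tau>T}] = \E[f(\bar Y_T)] - \E[f(\bar Y_T)\I_{\bar\tau\le T}]$ and decompose the hitting-time term according to whether $\bar Y_T\ge L$ or $\bar Y_T<L$. Since $\bar Y_0=x\ge L$ and $\bar Y$ has continuous paths, the event $\{\bar Y_T<L\}$ is contained in $\{\bar\tau\le T\}$, so the contribution from $\{\bar Y_T<L\}$ is simply $\E[f(\bar Y_T)\I_{\bar Y_T<L}]$. The contribution from $\{\bar\tau\le T,\,\bar Y_T\ge L\}$ is handled by Karatzas--Shreve's reflection principle for Brownian motion with constant volatility $\sigma(x)$: the process $(\bar Y_{\bar\tau+s}-L)_{s\ge 0}$ started at $0$ is a Brownian motion independent of $\F_{\bar\tau}$, so conditionally on $\bar\tau\le T$, the law of $\bar Y_T-L$ is symmetric about $0$, giving $\E[f(\bar Y_T)\I_{\bar\tau\le T,\bar Y_T\ge L}] = \E[f(2L-\bar Y_T)\I_{\bar\tau\le T,\bar Y_T<L}] = \E[f(2L-\bar Y_T)\I_{\bar Y_T<L}]$. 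Combining these two pieces yields the first equality.

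For the second equality, I would condition on $\rho$ and use the fact that $\P(\rho=1)=\P(\rho=0)=1/2$. The case $\rho=1$ gives $\bar X_T=x+\sigma(x)W_T=\bar Y_T$ with weight $(2\rho-1)=1$, producing the term $\E[f(\bar Y_T)\I_{\bar Y_T\ge L}]$ after multiplication by $2\cdot\tfrac{1}{2}$. The case $\rho=0$ gives $\bar X_T=2L-x+\sigma(x)W_T$ with weight $-1$, producing $-\E[f(2L-x+\sigma(x)W_T)\I_{2L-x+\sigma(x)W_T\ge L}]$. I would then observe that $\{2L-x+\sigma(x)W_T\ge L\}=\{\sigma(x)W_T\ge x-L\}$, and apply the substitution $W_T\mapsto -W_T$ (which preserves the law of $W_T$) to rewrite this as $\E[f(2L-x-\sigma(x)W_T)\I_{\sigma(x)W_T\le L-x}]=\E[f(2L-\bar Y_T)\I_{\bar Y_T<L}]$, using that $\P(\bar Y_T=L)=0$. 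Matching this with the right-hand side of the first equality closes the chain.

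I do not anticipate any real obstacle: both steps are purely distributional manipulations with no regularity or moment issues, since $f$ is bounded measurable and the strong Markov property together with symmetry of Brownian increments are used in their classical form. The only minor care-point is keeping strict versus weak inequalities consistent at $\bar Y_T=L$, which is harmless because $\sigma(x)W_T$ has a density and hence $\P(\bar Y_T=L)=0$.
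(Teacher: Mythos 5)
Your proof is correct and follows exactly the route the paper intends: the paper gives no written proof, stating only that the lemma is ``straightforward by using the reflection principle, see e.g. Karatzas and Shreve,'' and your argument supplies precisely that — the reflection principle for the first equality and the conditioning on $\rho$ combined with the symmetry $W_T\stackrel{d}{=}-W_T$ for the second. Both steps, including the handling of the null event $\{\bar Y_T=L\}$, are sound.
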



\subsection{Basic Markov chain framework}

We also consider a Poisson process with parameter $\lambda>0$, independent of the one-dimensional Brownian motion $W$ with jump times $T_i$, $i\in \mathbb{N}$ and we set $ \zeta_i:= T_i\wedge T$, $ i\in\mathbb{N} $ with the convention that $\zeta_0= T_0 = 0$. 

Define $ \pi $ to be the partition of $ [0,T]$ given by $\pi:=\{0=:\zeta_0<\cdots<\zeta_{N_T} \leq T\} $. Associated with this set, we recall the definition of the simplex $ A_{n}:=\{t\in (0,T]^n;0<t_1<\cdots<t_n\leq T\}$. For instance, on the set $ \{N_T=n\} $, $ n\in\mathbb{N} $, we have $ (\zeta_1,\dots,\zeta_n)\in A_n $ and $\zeta_{n+1} = T$.  In particular, for the set $ \{N_T=0\} $ (i.e. $ n=0 $), we let $ \pi:=\{0,T\} $ and $ A_0=\emptyset $. In this sense, we will use throughout the rest of the paper the index $ n\in\bar{\mathbb{N}} $ without any further mention of its range of values.

As it is the case in the previous observation, many proofs and definitions will be carried out conditioning on the set  $\seq{N_T = n}$, $ n\in\bar{\mathbb{N}} $. 

Let $\bar{X}:= (\bar{X}_i)_{ i \in\bar{\mathbb{N}}} $ be the discrete time Markov chain starting at time $0$ from $\bar{X}_0 = x$ and evolving according to 
\begin{align}
\label{eq:MCa}
\bar{X}_{i+1}:=&\rho_{i+1}\bar{X}_i+(1-\rho_{i+1}) (2L-\bar{X}_i)+\sigma_i Z_{i+1}, \,\quad  i \in\bar{\mathbb{N}}_ {N_T},
\end{align} 

\noindent where for simplicity we set $ \sigma_i:=\sigma(\bar{X}_i) $, $ Z_{i+1}:= W_{\zeta_{i+1}} - W_{\zeta_i}=\sigma_i^{-1}\left(\bar{X}_{i+1}-\rho_{i+1}\bar{X}_i-(1-\rho_{i+1})(2L-\bar{X}_i)\right)$ and $\{\rho_i;i\in\mathbb{N}\} $ is an i.i.d. sequence of Bernoulli$(1/2)$ r.v.'s such that $W, \, N $ and $\{\rho_i;i\in\mathbb{N}\} $ are mutually independent. In what follows we use the notation $ h_i\equiv h(\bar{X}_i)$, $ i\in\bar{\mathbb{N}}_{n+1} $ for any function $ h:\mathbb{R}\rightarrow\mathbb{R} $. In particular, the reader may have noticed that we already used this notation in the above formula for $ h=\sigma $. We also associate to the Markov chain $\bar{X}$ the following sets
\begin{equation}
\label{set:Din:X}
 D_{i,n}:=\{\bar{X}_{i}\geq L, N_T=n\} , \quad \mbox{ for } i\in \bar{\N}_{n+1}.
\end{equation}


 It is important to point out that given $ N_T=n $, the conditional distribution of $\bar X_{n+1}$ given $\bar X_{n}$ is not the same as the conditional distribution of $\bar X_{i+1}$ given $\bar X_{i}$ for $i\in\bar{\mathbb{N} }_{n-1}$. This is due to the fact that the length of the last interval is $T-\zeta_{n}$, rather than the waiting time between two consecutive Poisson jumps. 
 This remark applies to various definitions and results to be stated through the rest of the article.

 We define the filtration $\mathcal{G} := (\mathcal{G}_i)_{i\in \bar{\mathbb{N}}}$ where $\mathcal{G}_i := \sigma(Z^i,\zeta^i,\rho^i) $ with the notation $ a^i :=(a_1,\dots,a_i) $ for $ a=Z,\ \zeta,\ \rho $, $ i \in\mathbb{N}$ and $ \mathcal{G} _0$ defined as the trivial $ \sigma $-field. 
We assume that the filtration $\mathcal{G}$ satisfies the usual conditions.

\subsection{Simplified Malliavin Calculus for the underlying Markov chain}
\label{sec:3a}
In this section we introduce the required material for our Malliavin calculus computations. 
Instead of using an infinite dimensional calculus as it is usually done in the literature, the approach developed below is based on a finite dimensional calculus for which the dimension is given by the number of jumps of the underlying Poisson process involved in the Markov chain $\bar{X}$. In what follows, $ n\in\bar{\mathbb{N}} $ unless stated otherwise.

We start by defining the following space of smooth r.v.'s.

\begin{definition}
	\label{def:1}
	For $ i\in\bar{\mathbb{N}}_n$, we define the set $ {\mathbb{S}}_{i+1,n}(\bar{X}) $ as the subset of r.v.'s $ H\in \mathbb{L}^0 $ such that there exists a measurable function $ h:\mathbb{R}^2\times \{0,1\}\times A_2\rightarrow \mathbb{R} $ satisfying
	\begin{enumerate}
		\item $\displaystyle{ H=
			h(\bar{X}_i,\bar{X}_{i+1},\rho_{i+1},\zeta_i,\zeta_{i+1}) 
			}
			$ on the set $\{N_T=n\} . $ 
		\item For any $ r\in\{0,1\} $ and any $ (s,t)\in A_2 $, $ h(\cdot,\cdot,r,s,t)\in\mathscr{C}_p^\infty(\mathbb{R}^2) $.
	\end{enumerate}
\end{definition} 

For a r.v. $ H\in  {\mathbb{S}}_{i+1,n}(\bar{X})$, $ i\in\bar{\mathbb{N}}_n$, we may sometimes abuse the notation and write 
\begin{equation}
\label{abuse:notation:space}
 H\equiv H(\bar{X}_i,\bar{X}_{i+1},\rho_{i+1}, \zeta_i,\zeta_{i+1}),
\end{equation}
that is the same symbol $ H $ may denote the r.v. or the function in the set $ {\mathbb{S}}_{i+1,n}(\bar{X}) $.  One can easily define the flow derivatives for $ H\in\mathbb{S}_{i+1,n}(\bar{X}) $ as follows:
\begin{align}
\partial_{\bar{X}_{i+1}}H:=&\partial_2h
(\bar{X}_i,\bar{X}_{i+1},\rho_{i+1},\zeta_i,\zeta_{i+1}), \nonumber\\
\partial_{\bar{X}_{i}}H:=&\partial_1h(\bar{X}_i,\bar{X}_{i+1},\rho_{i+1},\zeta_i,\zeta_{i+1})+\partial_2h(\bar{X}_i,\bar{X}_{i+1},\rho_{i+1},\zeta_i,\zeta_{i+1})\partial_{\bar{X}_{i}}\bar{X}_{i+1}, \label{eq:flow} \\
\partial_{\bar{X}_{i}}\bar{X}_{i+1}:=&
(2\rho_{i+1}-1)+\sigma'_i Z_{i+1}.
\nonumber
\end{align}

We now define the derivative and integral operators for $ H\in {\mathbb{S}}_{i+1,n}(\bar{X}) $, $ i\in\bar{\mathbb {N}}_n $, as 
\begin{align}
\label{eq:I1}
\mathcal{I}_{i+1}(H)
:= &H \frac{ Z_{i+1}}{\sigma_i(\zeta_{i+1}-\zeta_i) }- {\mathcal{D}_{i+1} H}, \qquad \mathcal{D}_{i+1}H:=\partial_{\bar{X}_{i+1}}H.
\end{align}

Note that due to the above definitions and Assumption $ \mathbf{(H)} $, we also have that  $ \mathcal{I}_{i+1}(H),\mathcal{D}_{i+1}H \in{\mathbb{S}}_{i+1,n}(\bar{X})$ so that we can define iterations of the above operators, namely $ \mathcal{I}_{i+1}^{\ell+1}(H) = \mathcal{I}_{i+1}(\mathcal{I}^{\ell}_{i+1}(H))$ and similarly $\mathcal{D}^{\ell+1}_{i+1}(H) = \mathcal{D}_{i+1}(\mathcal{D}^{\ell}_{i+1}H)$, $ \ell\in\bar {\mathbb{N}} $, with the convention $\mathcal{I}^{0}_{i+1} (H) = \mathcal{D}^{0}_{i+1}(H) = H$.

Through this article, we will use the following notation for a certain type of conditional expectation that will appear frequently.
 For any $X\in \mathbb{L}^{1}$ and any $ i\in \bar{\mathbb{N}}_{n}$, 
\begin{align*}
	\E_{i,n}[X]:=\E[X\,\vert\,\mathcal{G}_i,T^{n+1},\rho^{n+1}, N_T=n].
\end{align*}

With the above definitions, the following duality\footnote{This duality is obtained using the Gaussian density of $ \bar{X}_{i+1} $ while in classical Malliavin calculus it is based on the density of the Wiener process. Therefore the derivative and integral, $ \mathcal{D}_{i+1} $ and $ \mathcal{I}_{i+1} $, $ i\in\bar{\mathbb{N}}_n $ defined in the  formula \eqref{eq:IBP} are renormalizations of the usual duality principle in Malliavin calculus. In our case this notation simplifies greatly many equations.} is satisfied for any $ f\in\mathscr {C}^1_p(\rr)$ and any $ (i,\ell) \in\bar{\mathbb{N}}_n \times \mathbb{N}$:
\begin{align}
\label{eq:IBP}
\E_{i,n}\left[{\mathcal{D}}^\ell_{i+1}f(\bar{X}_{i+1})H\right]=&\E_{i,n}\left[f(\bar{X}_{i+1}){\mathcal{I}}^\ell_{i+1}(H)\right].
\end{align}
 In order to obtain explicit norm estimates for r.v.'s in $ {\mathbb{S}}_{i+1,n}(\bar{X}) $ it is useful to define for $H\in \mathbb{S}_{i+1,n}(\bar{X})$, $ i\in\bar{\mathbb{N}}_n $ and $p\geq1$
$$
\|H\|_{p, i, n}^p:=\E_{i,n}\left[|H|^p\right].
$$

  Another useful formula that is used at several places later on is the following \emph{extraction formula} for $ H_1,H_2 \in\mathbb{S}_{i+1,n}(\bar{X})$ :
\begin{align}
\label{eq:exta}
\mathcal{I}^\ell_{i+1}(H_1H_2)=\sum_{j=0}^{\ell}(-1)^{j}\binom{\ell}{j}
\mathcal{I}_{i+1}^{\ell-j}(H_1)\mathcal{D}^{j}_{i+1}H_2.
\end{align}
The proof of the above statement is done by induction. Then, by iteration, one obtains that $ {\mathcal{I}}^\ell_{i}(1)\in \mathbb{S}_{i,n}(\bar{X}) $ and it satisfies ${\mathcal{I}}^{\ell+1}_{i}(1)={\mathcal{I}}^\ell_{i}(1){\mathcal{I}}_{i}(1) -\ell {\mathcal{I}}^\ell_{i}(1)$ which, in particular, implies:
\begin{align}
\label{eq:link:integral:hermite:pol}
&{{\mathcal{I}}}_{i}^\ell(1)=(-1)^\ell \mathcal{H}_\ell({a_{i-1}}(\zeta_i-\zeta_{i-1}),\sigma_{i-1} Z_i).
\end{align}

 Using \eqref{eq:exta} for $ H_1=1 $ and $ H_2=H $ as well as \eqref{eq:I1}, the following norm bound for stochastic integrals is clearly satisfied for $ i\in \bar{\mathbb{N}}$ and any measurable set $ A\in\mathcal{F} $
\begin{align}
\label{eq:Hesta}
\left\|\I_{A}{{\mathcal{I}}}_{i+1}^{\ell}(H)\right\|_{p, i, n}\leq C_{\ell,p}\sum_{j=0}^{\ell}(\zeta_{i+1}-\zeta_i)^{\frac{j-\ell}{2}}\|\I_{A}\mathcal{D}^j_{i+1}H\|_{p, i, n}.
\end{align}

The following H\"older like inequality for smooth r.v.'s $ H_1,H_2 \in\mathbb{S}_{i+1,n}(\bar{X})$ is also frequently used in our computations without any further mentioning for any $ i\in \bar{\mathbb{N}} $, for any $ p,p_1,p_2\geq 1 $ satisfying $ p^{-1}=p_1^{-1}+p_2^{-1} $ and any $ A\in\mathcal{F} $:
\begin{align}
\label{eq:Hest1}
\|\I_{A}H_1H_2\|_{p, i, n}\leq& C\|\I_{A}H_1\|_{p_1, i, n}\|\I_{A}H_2\|_{p_2,i,n}.
\end{align}

We will also frequently manipulate quantities such as $ \E_{i,n}[\delta_L(\bar{X}_{i+1})H] $ for $ H\in\mathbb{S}_{i+1,n}(\bar{X}) $. The previous expression has a clear meaning due to the IBP formula \eqref{eq:IBP} (see the theory in \cite{IW}, Chapter V.9. for a much more general framework) or in the sense of conditional laws
\begin{align*}
\E_{i,n}[\delta_L(\bar{X}_{i+1})H] =&\E_{i,n}[\I_{D_{i+1,n}}{\mathcal{I}}_{i+1}(H)] = \E_{i,n}[H\vert \bar{X}_{i+1}=L]g(a_i(\zeta_{i+1}-\zeta_i),L-\bar{X}_i).\nonumber
\end{align*}

We finally introduce the following space of r.v.'s with certain time (ir)regularity estimates.
\begin{definition}
	 For $\ell\in\mathbb{Z}$, $ i\in \bar{\mathbb{N}}_n$, we define the space $ {\mathbb{M}}_{i+1,n}(\bar{X},\ell/2) $ as the set of r.v.'s $ H\in\mathbb{L}^0 $, satisfying the property {\it (1)} in Definition \ref{def:1} and such that 
	\begin{align*}
	\I_{D_{i,n}}\|\I_{D_{i+1,n}}H\|_{p, i, n}\leq C(\zeta_{i+1}-\zeta_i)^{\ell/2}
	\end{align*}
	 {for some deterministic constant $ C $ independent of $ (p,i,n) $.}
\end{definition}

  We again remark that since the definition of the space $ {\mathbb{M}}_{i+1,n}(\bar{X},\ell/2)  $ uses the conditional norm $\E_{i, n}[.]$ and property {\it (1)} in Definition \ref{def:1}, when we say that a r.v. $ H\in  {\mathbb{M}}_{i+1,n}(\bar{X},\ell/2)$, this statement is understood on the set $\left\{N_T = n \right\}$, $n\in \N$.


A straightforward consequence of equation \eqref{eq:exta} and \eqref{eq:Hesta} is the following property.
%

\begin{lem}
	\label{lem:6}
	For $ j\in\{0,1\} $, $ i\in \bar{\mathbb{N}}, $ and $ k\in\mathbb{N} $, if $ H_1\in{\mathbb{M}}_{i+1,n}(\bar{X},j/2)\cap\mathbb{S}_{i+1,n}(\bar{X}) $ with $ \|\mathcal{D}_{i+1}^kH_1\|_{p, i, n}\leq C $ then $ {\mathcal{I}}_{i+1}^k(H_1)\in{\mathbb{M}}_{i+1,n} (\bar{X},(j-k)/2)$. Furthermore, if $ H_2\in  {\mathbb{M}}_{i+1,n}(\bar{X},k/2) $ then the product $ H_1H_2\in {\mathbb{M}}_{i+1,n}(\bar{X},(j+k)/2)  $.
\end{lem}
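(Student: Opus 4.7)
Both assertions follow from combining three tools already established: the extraction formula \eqref{eq:exta}, the stochastic-integral norm bound \eqref{eq:Hesta}, and the H\"older-type inequality \eqref{eq:Hest1}, together with property (1) of Definition \ref{def:1}.

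For the first assertion, my first step is to note that, since $\mathbb{S}_{i+1,n}(\bar{X})$ is stable under $\mathcal{I}_{i+1}$ and $\mathcal{D}_{i+1}$, the r.v.\ $\mathcal{I}_{i+1}^{k}(H_1)$ lies in $\mathbb{S}_{i+1,n}(\bar{X})$, so property (1) of Definition \ref{def:1} is automatic. Next, applying \eqref{eq:Hesta} with $A = D_{i+1,n}$ gives
\[
\left\|\I_{D_{i+1,n}}\,\mathcal{I}_{i+1}^{k}(H_1)\right\|_{p,i,n} \;\leq\; C_{k,p}\sum_{j'=0}^{k} (\zeta_{i+1}-\zeta_i)^{(j'-k)/2}\,\left\|\I_{D_{i+1,n}}\,\mathcal{D}_{i+1}^{j'}H_1\right\|_{p,i,n}.
\]
On $D_{i,n}$, the $j'=0$ term is treated by the hypothesis $H_1 \in \mathbb{M}_{i+1,n}(\bar{X}, j/2)$, which produces exactly a factor $(\zeta_{i+1}-\zeta_i)^{(j-k)/2}$. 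For each $j' \geq 1$, the uniform bound on $\|\mathcal{D}_{i+1}^{j'}H_1\|_{p,i,n}$ (where, for intermediate orders, the hypothesis on the top derivative is extended to lower orders using the smoothness built into $\mathbb{S}_{i+1,n}(\bar{X})$) is combined with the trivial inequality $\zeta_{i+1}-\zeta_i \leq T$ and $j'-k \geq j-k$ (since $j' \geq 1 \geq j$) to absorb the time factor into $C(\zeta_{i+1}-\zeta_i)^{(j-k)/2}$. Summing the $k+1$ terms yields $\mathcal{I}_{i+1}^{k}(H_1) \in \mathbb{M}_{i+1,n}(\bar{X},(j-k)/2)$.

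For the second assertion, property (1) of Definition \ref{def:1} for the product $H_1 H_2$ is immediate from the same property for $H_1$ and $H_2$ separately. The H\"older-type inequality \eqref{eq:Hest1} with $p_1 = p_2 = 2p$, combined with the $\mathbb{M}$-estimates on $H_1$ and $H_2$ (whose constants are uniform in $p$ by definition), gives, on $D_{i,n}$,
\[
\|\I_{D_{i+1,n}} H_1 H_2\|_{p,i,n} \;\leq\; C\,\|\I_{D_{i+1,n}} H_1\|_{2p,i,n}\,\|\I_{D_{i+1,n}} H_2\|_{2p,i,n} \;\leq\; C(\zeta_{i+1}-\zeta_i)^{(j+k)/2},
\]
which is precisely what is required. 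The only mildly delicate point I foresee is the reading of the derivative hypothesis: controlling the intermediate terms $j' \in \{1,\ldots,k-1\}$ in the extraction sum needs analogous bounds on all lower-order derivatives of $H_1$, not only the $k$-th one. This is implicit in the smooth, polynomially-bounded structure of $\mathbb{S}_{i+1,n}(\bar{X})$, but should be flagged explicitly in the proof to keep the argument self-contained.
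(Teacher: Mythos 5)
Your proof is correct and follows exactly the route the paper intends: the paper gives no written proof, merely asserting the lemma as ``a straightforward consequence of \eqref{eq:exta} and \eqref{eq:Hesta}'', and your argument fleshes this out via \eqref{eq:Hesta} for the integral part and the H\"older-type bound \eqref{eq:Hest1} for the product part. Your flag about needing control of the intermediate derivatives $\mathcal{D}_{i+1}^{j'}H_1$, $1\leq j'\leq k-1$, is a fair observation about an implicit reading of the hypothesis, but it does not change the argument.
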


\begin{lem}\label{chain:rule}
Let $h\equiv h(\bar X_i, \bar X_{i+1}, \rho_{i+1}, \zeta_i,\zeta_{i+1}) \in \mathbb{S}_{i+1,n}(\bar X)$ with 
$ i\in \bar{\mathbb{N}}, $ then the following chain rule type formula holds
\begin{gather*}
\partial_{\bar X_i}\mathcal{I}_{i+1}(h) = \mathcal{I}_{i+1}(\partial_{\bar X_i}h) - \frac{\sigma'_i}{\sigma_i}\mathcal{I}_{i+1}(h) .
\end{gather*}
\end{lem}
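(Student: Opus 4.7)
The plan is to expand $\mathcal{I}_{i+1}(h)$ using its defining formula \eqref{eq:I1}, apply the flow derivative $\partial_{\bar X_i}$ termwise, and then reassemble the pieces. Writing $U:=Z_{i+1}/(\sigma_i(\zeta_{i+1}-\zeta_i))$ so that $\mathcal{I}_{i+1}(h)=hU-\mathcal{D}_{i+1}h$, the flow-derivative rule \eqref{eq:flow} together with the product rule gives
$$\partial_{\bar X_i}\mathcal{I}_{i+1}(h)=(\partial_{\bar X_i}h)\,U+h\,\partial_{\bar X_i}U-\partial_{\bar X_i}\mathcal{D}_{i+1}h.$$
The proof then reduces to computing the two auxiliary quantities $\partial_{\bar X_i}U$ and $\partial_{\bar X_i}\mathcal{D}_{i+1}h$ and rearranging.

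For the first, I would use the identity $\sigma_i Z_{i+1}=\bar X_{i+1}-(2\rho_{i+1}-1)\bar X_i-2L(1-\rho_{i+1})$ together with $\partial_{\bar X_i}\bar X_{i+1}=(2\rho_{i+1}-1)+\sigma'_i Z_{i+1}$ from \eqref{eq:fd}: the numerator of $U$ has flow derivative $\sigma'_i Z_{i+1}$, while the denominator $\sigma_i^2(\zeta_{i+1}-\zeta_i)$ has derivative $2\sigma_i\sigma'_i(\zeta_{i+1}-\zeta_i)$. Combining via the quotient rule yields the clean identity
$$\partial_{\bar X_i}U=-\frac{\sigma'_i}{\sigma_i}\,U,$$
which is the source of one half of the boundary correction.

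For the second, since $\mathcal{D}_{i+1}=\partial_2$ is the partial derivative with respect to the second slot of the defining function of $h$, whereas $\partial_{\bar X_i}=\partial_1+\partial_2\cdot\partial_{\bar X_i}\bar X_{i+1}$ is a flow derivative, I would compute the commutator $[\partial_{\bar X_i},\mathcal{D}_{i+1}]h$. Equality of mixed partials handles all second-order partials of $h$ itself; the only surviving term comes from differentiating $\partial_{\bar X_i}\bar X_{i+1}$ with respect to the $\bar X_{i+1}$-slot, which through $Z_{i+1}=\sigma_i^{-1}(\bar X_{i+1}-\cdots)$ equals $\sigma'_i/\sigma_i$. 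This gives the commutation
$$\partial_{\bar X_i}\mathcal{D}_{i+1}h=\mathcal{D}_{i+1}(\partial_{\bar X_i}h)-\frac{\sigma'_i}{\sigma_i}\mathcal{D}_{i+1}h.$$

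Substituting both identities back, the terms $(\partial_{\bar X_i}h)U-\mathcal{D}_{i+1}(\partial_{\bar X_i}h)$ assemble into $\mathcal{I}_{i+1}(\partial_{\bar X_i}h)$, while the remaining contributions collapse to $-(\sigma'_i/\sigma_i)(hU-\mathcal{D}_{i+1}h)=-(\sigma'_i/\sigma_i)\mathcal{I}_{i+1}(h)$, which is the claimed formula. The main subtlety, and the only real obstacle, is bookkeeping the two distinct notions of derivative at play: the flow derivative $\partial_{\bar X_i}$ propagates through $\bar X_{i+1}$ via the Markov chain dynamics, while $\mathcal{D}_{i+1}=\partial_{\bar X_{i+1}}$ is a pure partial derivative holding $\bar X_i$ fixed. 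The factor $\sigma'_i/\sigma_i$ is precisely the price paid for swapping their order, and it arises in the same form both from the $\partial_{\bar X_i}U$ computation and from the commutator $[\partial_{\bar X_i},\mathcal{D}_{i+1}]$.
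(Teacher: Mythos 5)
Your proposal is correct and follows essentially the same route as the paper's proof: decompose $\mathcal{I}_{i+1}(h)$ as $\mathcal{I}_{i+1}(1)h-\mathcal{D}_{i+1}h$ (your $U$ is exactly $\mathcal{I}_{i+1}(1)$), use $\partial_{\bar X_i}\mathcal{I}_{i+1}(1)=-\frac{\sigma'_i}{\sigma_i}\mathcal{I}_{i+1}(1)$, and compute the commutator $[\partial_{\bar X_i},\mathcal{D}_{i+1}]h=-\frac{\sigma'_i}{\sigma_i}\mathcal{D}_{i+1}h$ via $\mathcal{D}_{i+1}\partial_{\bar X_i}\bar X_{i+1}=\sigma'_i/\sigma_i$. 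The only (harmless) difference is that you verify the identity for $\partial_{\bar X_i}U$ by an explicit quotient-rule computation, whereas the paper states it without proof.
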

\begin{proof}
From the extraction formula \eqref{eq:exta}, the usual chain rule and the fact that $\partial_{\bar X_i}\mathcal{I}_{i+1}(1) = -\frac{\sigma'_i}{\sigma_i}\mathcal{I}_{i+1}(1)$, 
\begin{align*}
\partial_{\bar X_i} \mathcal{I}_{i+1}(h)
& = -\frac{\sigma'_i}{\sigma_i}\mathcal{I}_{i+1}(1) h + \mathcal{I}_{i+1}(1) \partial_{\bar X_{i}} h - \partial_{\bar X_{i}} {\mathcal{D}}_{i+1}h\\
\mathcal{I}_{i+1}(\partial_{\bar X_i} h)
& = \mathcal{I}_{i+1}(1) \partial_{\bar X_i} h - {\mathcal{D}}_{i+1}\partial_{\bar X_i} h.
\end{align*}
Note that $\partial_{\bar X_i}$ and ${\mathcal{D}}_{i+1}$ do not commute. Indeed, by the usual chain rule, one has
\begin{align*}
\partial_{\bar X_{i}} {\mathcal{D}}_{i+1}h -  {\mathcal{D}}_{i+1}\partial_{\bar X_i} h & = \partial_1\partial_2 h  + \partial^2_2 h \frac{\partial  \bar X_{i+1}}{\partial \bar X_i} - {\mathcal{D}}_{i+1}(\partial_1h + \partial_2 h\frac{\partial  \bar X_{i+1}}{\partial \bar X_i} )= -\partial_2 h  \frac{\sigma'_i}{\sigma_i}
\end{align*}
where in the last equality we used the fact that ${\mathcal{D}}_{i+1}\frac{\partial  \bar X_{i+1}}{\partial \bar X_i}  = \frac{\sigma'_i}{\sigma_i}$. By combining the above computations we obtain
\begin{align*}
\partial_{\bar X_i} \mathcal{I}_{i+1}(h) - \mathcal{I}_{i+1}(\partial_{\bar X_i} h) & = - \frac{\sigma'_i}{\sigma_i}\left(\mathcal{I}_{i+1}(1) - {\mathcal{D}}_{i+1} h\right) = -\frac{\sigma'_i}{\sigma_i}\mathcal{I}_{i+1}(h).
\end{align*}
\end{proof}
\begin{remark}
Heuristically, the result of Lemma \ref{chain:rule} can be viewed as a chain rule formula of the type 
\begin{align*}
\partial_{\bar X_i} (\mathcal{I}_{i+1}(h)) \equiv(\mathcal{I}_{i+1})(\partial_{\bar X_i} h) + (\partial_{\bar X_i} \mathcal{I}_{i+1})(h).
\end{align*}
\end{remark}

\section{Markov chain representation for killed processes}
\label{prob:repres:sec}
In this section, we establish a probabilistic representation for the marginal law of the killed process, that is, for the law of $ X_T $ on the set $ \{\tau> T\} $ based on the Markov chain $\bar{X}$ introduced in the previous section. The proof of the following result is postponed to Appendix \ref{app:sec}. 
\begin{theorem}
	\label{th:3.2a}Let $ f:\mathbb{R} \rightarrow\mathbb{R}$ be a measurable function with at most polynomial growth at infinity such that $ f(L)=0 $. Then, one has
	\begin{align}
	\label{eq:PR}
	\E\left[f(X_{T})\I_\seq{\tau> T}\right ]  &= \mathbb{E}\Big[  f(  
	\bar{X}
	_{N_T+1}) \prod_{{i}=1}^{N_T+1}  \I_{D_{i,N_T}}\bar{\theta}_i  \Big].
	\end{align}
	Here, for $ i\in\mathbb{N}_{n+1}$, we let 
		\begin{align}
	\label{eq:13aa}
	\bar{\theta}_i :=&\I_\seq{N_T>i-1}
	2(2\rho_i-1)\lambda^{-1}\left({{\mathcal{I}}}_{{{i}}}(c^i_1)+{{\mathcal{I}}}^2_{ {{i}}}(c^i_2)\right)
	+\I_\seq{N_T=i-1}2e^{\lambda T}(2\rho_{N_T+1}-1)
	.
	\end{align}
	 In the above definitions, we have used $ c^i_j\in\mathbb{S}_{i,n} (\bar{X})$, $ j=1,2 $, $ i\in \mathbb{N}_{n+1} $, $ n\in\bar{\mathbb{N} }$ where 
	\begin{align}
	c^i_1\equiv c^{i}_1(\bar{X}_{{i-1}}, \bar{X}_{i}) & {:= }b_i = b(\bar{X}_{i}),\label{def:c1:c2a}\\
	c_2^i \equiv c^{i}_2(\bar{X}_{{i-1}}, \bar{X}_{i}) & := \frac12 (a_i-a_{i-1}) = \frac12 (a(\bar{X}_{{i}}) - a(\bar{X}_{{i-1}})). \nonumber
	\end{align}
	
	Furthermore, we have 
	$\bar{\theta}_i \in\mathbb{S}_{i,n}(\bar{X}) $, $ i\in \mathbb{N}_{n+1} $, $ n\in\bar{\mathbb{N} }$ and the following time degeneracy estimate is satisfied: for all $p\geq 1$, there exists some positive constant $C:=C(T, a,  b, p)$ such that for $ i\in\mathbb{N}_n $,
		\begin{equation}
		\label{moment:estimates:prob:representation} 
		  \I_\seq{i \leq  n}(\zeta_{i}-\zeta_{i-1})^{\frac{p}{2}}\I_{D_{i-1,n}}\E_{i-1,n}\left[\I_{D_{i,n}}|{\bar\theta}_i |^p\right]+\I_\seq{i=n+1}\I_{D_{n,n}}\E_{n,n}\left[\I_{D_{n+1,n}}|{\bar\theta}_{n+1}|^p\right ]\leq C. 
		\end{equation}
		As a consequence, for all $ p\in [0,2) $, one has $ \E\left[\big|\prod_{{i}=1}^{N_T+1} \I_{D_{i,N_T}}\bar {\theta}_i \big|^p\right ]<\infty $ .
\end{theorem}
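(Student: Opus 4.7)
The plan is to combine a Markov semigroup perturbation argument with the reflection principle of Lemma \ref{lem:1} and the simplified Malliavin calculus of Section \ref{sec:3a}. First, I would introduce the ``frozen'' killed kernel $\bar{P}^{s,x}_{t}f := 2\E[(2\rho-1)f(\bar{X}^{s,x}_t)\I_{\{\bar{X}^{s,x}_t\geq L\}}]$, which by Lemma \ref{lem:1} is precisely $\E[f(\bar{Y}^{s,x}_T)\I_{\{\bar\tau>T\}}]$ for a Brownian motion with constant diffusion coefficient $\sigma(x)$. The generator discrepancy between the true diffusion generator $\mathcal{L} = b\partial + \tfrac{a}{2}\partial^2$ and the frozen one $\bar{\mathcal{L}}^x = \tfrac{a(x)}{2}\partial^2$ applied to a smooth $g$ is exactly $b(y)g'(y) + \tfrac{a(y)-a(x)}{2} g''(y)$, which produces the coefficients $c_1^i = b(\bar{X}_i)$ and $c_2^i = (a(\bar{X}_i)-a(\bar{X}_{i-1}))/2$ appearing in \eqref{def:c1:c2a}.

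Next, I would write a Duhamel-type identity $P^{kill}_T f - \bar P^{0,x}_T f = \int_0^T \bar P^{0,s}_s (\mathcal{L}-\bar{\mathcal{L}}^{\bar X_0}) P^{kill}_{T-s} f\,ds$ and iterate it, using the independent Poisson process of rate $\lambda$ as a randomization device: conditioning on $\{N_T=n\}$ with jump times $\zeta_1<\cdots<\zeta_n$ selects exactly the $n$-th term of the series, with the factor $\lambda^{-1}$ compensating the Poisson intensity $\lambda d\zeta_i$ at each perturbation time, and the factor $e^{\lambda T}$ in $\bar\theta_{n+1}$ compensating $\P(N_T=n) = e^{-\lambda T}(\lambda T)^n/n!$ on the no-perturbation endpoint. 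The integration by parts identity \eqref{eq:IBP}, applied in each sub-interval of the chain with the appropriate conditional Gaussian structure, converts the derivatives $g'$ and $g''$ coming from $\mathcal{L}-\bar{\mathcal{L}}^{\bar X_{i-1}}$ into the stochastic integrals $\mathcal{I}_i$ and $\mathcal{I}_i^2$ respectively. The assumption $f(L)=0$ is used to kill boundary contributions produced by IBP on the killing level. On the final interval $[\zeta_{N_T},T]$, no perturbation is applied and only the reflection factor $2(2\rho_{N_T+1}-1)$ remains, explaining the two cases in \eqref{eq:13aa}.

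For the time degeneracy estimate \eqref{moment:estimates:prob:representation}, I would combine the norm bound \eqref{eq:Hesta} with Lemma \ref{lem:6}. Since $b$ is bounded, $c_1^i\in\mathbb{M}_{i,n}(\bar X,0)$ and hence $\|\I_{D_{i,n}}\mathcal{I}_i(c_1^i)\|_{p,i-1,n}\leq C(\zeta_i-\zeta_{i-1})^{-1/2}$. For $c_2^i$, the Lipschitz regularity of $a$ together with $\bar X_i-\bar X_{i-1}$ being Gaussian of scale $(\zeta_i-\zeta_{i-1})^{1/2}$ gives $c_2^i\in\mathbb{M}_{i,n}(\bar X,1/2)$, from which Lemma \ref{lem:6} yields $\|\I_{D_{i,n}}\mathcal{I}^2_i(c_2^i)\|_{p,i-1,n}\leq C(\zeta_i-\zeta_{i-1})^{-1/2}$. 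Multiplying by $(\zeta_i-\zeta_{i-1})^{p/2}$ gives \eqref{moment:estimates:prob:representation}. For the global bound $\E|\prod_{i=1}^{N_T+1}\I_{D_{i,N_T}}\bar\theta_i|^p<\infty$ when $p<2$, I would condition on $\{N_T=n\}$, use the tower property with $\E_{i-1,n}[\cdot]$ iteratively to reduce the expectation to an integral of $\prod_{i=1}^n (\zeta_i-\zeta_{i-1})^{-p/2}$ against the joint density of the Poisson jump times, and observe that this Dirichlet-type integral is finite as soon as $p/2<1$; a final summation on $n$ is controlled by $\P(N_T=n)=e^{-\lambda T}(\lambda T)^n/n!$.

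The main obstacle I anticipate is making the Duhamel expansion rigorous in the presence of the Dirichlet boundary at $L$, in particular ensuring that the IBP manipulations produce no uncontrolled boundary contributions at the successive endpoints $\bar X_i$: the condition $f(L)=0$ handles the terminal one, but the intermediate boundary terms must be shown to vanish through the combined effect of the indicator $\I_{D_{i,n}}$, the specific reflection structure $\bar X_{i+1}=\rho_{i+1}\bar X_i+(1-\rho_{i+1})(2L-\bar X_i)+\sigma_i Z_{i+1}$ (which symmetrizes around $L$), and the cancellation provided by the factor $(2\rho_i-1)$. The careful bookkeeping of these cancellations, together with the justification of convergence of the perturbation series term-by-term, is the delicate part that will be deferred to the appendix.
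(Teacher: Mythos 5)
Your proposal is correct and follows essentially the same route as the paper: the Duhamel identity you write is exactly what the paper obtains by applying It\^o's formula to $P_{T-t}f(\bar{Y}_{t\wedge\bar\tau})$ (using the regularity and Dirichlet condition $P_tf(L)=0$ of the killed semigroup), followed by Poisson randomization of the time integrals, the finite-dimensional IBP \eqref{eq:IBP} with the extraction formula to produce $\mathcal{I}_i(c_1^i)$ and $\mathcal{I}_i^2(c_2^i)$, and the same $\mathbb{M}_{i,n}$-based degeneracy estimates and Dirichlet-type integral giving integrability for $p<2$. The two cancellation mechanisms you flag for the intermediate boundary terms are indeed the ones the paper uses: the parity of $(2\rho_i-1)\mathcal{I}_i^k(1)$ on $\{\bar X_i=L\}$ (Lemma \ref{lem:2.1}) and the propagated Dirichlet condition $P_tf(L)=0$.
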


We importantly observe that the above probabilistic representation allows to implement an unbiased Monte-Carlo simulation method since one just has to simulate the Poisson process $N$, then the Markov chain $ \bar{X}$ along the jump times of $N$ and finally to compute the explicit product of weights. Contrary to our previous work \cite{FKL1}, the Markov chain $ \bar{X}$ is defined from the reflection principle introduced in Lemma \ref{lem:1} thus reducing further the number of variables in the problem. This probabilistic representation is thus more workable for the forthcoming IBP formulae.

\begin{remark}\label{rem:3}
We make several remarks before moving on:\hfill\break
(i) The assumption $ f(L)=0 $ can be avoided at the cost of longer formulae as one can obtain a probabilistic representation based on the same Markov chain for the probability $ \P(\tau\geq T) $ using the results in Section 3 of \cite{FKL1}, but we do not pursue this goal here.\hfill\break
(ii) In the proof of the above result, one uses the following crucial property: For $ i\in\bar{\mathbb{N}}_n $, the random variables $\I_{D_{i+1,n}} c^{i+1}_1$, $\I_{D_{i+1,n}}\partial_{\bar{X}_i}c^{i+1}_2\in{\mathbb{M}}_{i+1,n}(\bar{X},0) $ and $ \I_{D_{i+1,n}}c^{i+1}_2\in{\mathbb{M}}_{i+1,n}(\bar{X},1/2) $ so that $\I_{D_{i+1,n}}{\mathcal{I}}_{i+1}(c^{i+1}_1)$, $ \I_{D_{i+1,n}}{\mathcal{I}}_{i+1}^2(c^{i+1}_2)\in {\mathbb{M}}_{i+1,n}(\bar{X},-1/2)  $ and therefore $ \I_{D_{i+1,n}}\bar{\theta}_{i+1}\in  {\mathbb{M}}_{i+1,n}(\bar{X},-1/2)$. 
\hfill\break	
(iii) The power $ p/2 $ appearing in the time degeneracy estimates in \eqref{moment:estimates:prob:representation} is crucial in order to determine the integrability of the r.v. appearing on the right hand side of \eqref{eq:PR}. This motivates the definition below.

\end{remark}

\begin{definition}
	\label{def:td}
	We say that a weight r.v. $ H\in \mathbb{S}_{i,n} $ satisfies the time degeneracy estimate if for all $ p\geq 1 $
	\begin{align}
	\label{eq:td}
	 \I_{D_{i-1,n}}\left\|\I_{D_{i,n}}H\right\|_{p,i-1,n}\leq C(\zeta_i-\zeta_{i-1})^{-\frac 12} 
	\end{align}
	  in the case that $ i\in\mathbb{N}_n $ and 
	$ \I_{D_{n,n}}\left\|\I_{D_{n+1,n}}H\right \|_{p,n,n}\leq C $ in the case that $ i=n+1 $.	
\end{definition}

At this stage, we find it useful to show graphically the dynamic structure of the Markov chain and the random {\it weights} $ \bar\theta_i $, $ i\in\mathbb{N}_{N_T+1}  $ for the probabilistic representation \eqref{eq:PR}. This will be important in order to understand the structure of the IBP formula.


\begin{figure}[t]
\vspace{1cm}
\hspace{-16cm}
\begin{tikzpicture}(500,400)
\put(32,3){$x$}
\put(32,12){$ \bar{X}_0 $}
\put(40,5){\vector(1,0){30}}
\put(50,10){$\bar\theta_{1}$}
\put(71,3){$\bullet$}
\put(70,12){$ \bar{X}_1$}
\put(78,5){\vector(1,0){30}}
\put(85,10){$\bar\theta_{2}$}
\put(110,3){$\bullet$}
\put(109,12){$ \bar{X}_2$}
\put(120,3){$\cdot$}
\put(125,3){$\cdot$}
\put(130,3){$\cdot$}
\put(135,3){$\cdot$}
\put(142,3){$\bullet$}
\put(140,12){$ \bar{X}_{n-1} $}
\put(150,5){$\vector(1,0){43}$}
\put(170,10){$\bar\theta_{n}$}
\put(197,3){$\bullet$}
\put(197,12){$ \bar{X}_{n} $}
\put(230,3){$=:\bigstar_{n}$}
\put(256,5){$\vector(1,0){30}$}
\put(264,10){$\bar\theta_{n+1}$}
\put(296,3){$f(\bar{X}_{n+1})\,\,\,\,\mathrm{ Space }$}
\put(365,3){$ \,\,\mathrm{ evolution}$}
\put(32,-17){$\zeta_0$}
\put(41,-15){\vector(1,0){28}}
\put(71,-17){$\zeta_1$}
\put(81,-15){\vector(1,0){28}}
\put(110,-17){$\zeta_2$}
\put(120,-17){$\cdot$}
\put(125,-17){$\cdot$}
\put(130,-17){$\cdot$}
\put(135,-17){$\cdot$}
\put(140,-18){$\zeta_{n-1}$}
\put(150,-14){$\vector(1,0){40}$}
\put(192,-17){$\zeta_{n}$}
\put(256,-15){$\vector(1,0){28}$}
\put(286,-15){ \hspace*{.3cm}$ T $}
\put(306,-17){$\,\,\mathrm{  Continuous}$}
	\put(358,-17){$\,\, \mathrm{time} $}
	\put(383,-17){$\,\, \mathrm{evolution} $}
\end{tikzpicture}
\vspace*{10mm}
\caption{The time evolution of the Markov chain and its weights} \label{fig:M1}
\end{figure}
\vspace{0.1cm}

In the above figure, one observes the evolution of the Markov chain $ \bar{X}_i $, $ i\in\bar{\mathbb{N}}_{n+1} $, together with its associated weight sequence appearing in the probabilistic representation. We also note that at the end of the Markov chain evolution which always happens at time $ T $, the test function $ f $ is evaluated at $ \bar{X}_{n+1}$. Furthermore, the right hand side of \eqref{eq:PR} is the product of all elements on top of the arrows on the first line of the above figure and the corresponding indicator functions of the sets $ D_{i,n} $ defined by \eqref{set:Din:X}, with $ f(\bar{X}_{n+1}) $. The second line gives the time evolution followed by the jump times of the Poisson process $ N $ which coincide with the times at which transitions of the Markov chain $ \bar{X}$ happen.


 In other figures that appear later on, we will use the general symbol $ \bigstar_k$, $ k\in\mathbb{N}_{n+1} $ defined in Figure \ref{fig:M1} which stands for product $ \bigstar_k:=\prod_{i=1}^k\I_{D_{i,n}}\bar{\theta}_i  $. We remark here that as stated in \eqref{eq:13aa}, on the set $\left\{ N_T=n\right\}$, the definition of $(\bar{X}_{n+1}, \bar\theta_{n+1} )$ differs from all other $(\bar{X}_i,\bar{\theta}_i)  $, $ i\in\mathbb{N}_n $.

\section{Ingredients for an IBP formula}
\label{sec:ingr}
In this section we give the main ingredients in order to establish an integration by parts formula for the marginal law taken at time $T$ of the killed process. 

In order to understand the main ingredients to be introduced in this section, one starts by supposing that the problem is to find an IBP formula for 
$ \E\left[f'(X_{T})\I_\seq{\tau\geq T}\right ] $. The BEL formula will be easier to handle and is tackled in Section \ref{BEL:sec}.

\subsection{Some Heuristics}
The following heuristic arguments may help the reader to understand the strategy developed in the next sections to prove our IBP formulae. 

{\it Step 1:}
 The first step was performed with the probabilistic representation established in Theorem \ref{th:3.2a} involving the Markov chain $\bar{X}$ evolving on a time grid governed by the set of jump times of the Poisson process $N$.

{\it Step 2:} As already explained in the introduction, the central idea is to reduce the infinite dimensional problem of finding an IBP formula for $\E[f'(X_T) \I_\seq{\tau >T}]$ to a finite dimensional problem, namely finding an IBP formula for the quantity $\mathbb{E}\Big[  f'(\bar{X}_{N_T+1}) \prod_{{i}=1}^{N_T+1}  \I_{D_{i,N_T}}\bar{\theta}_i  \Big]$, for which the dimension is random and given by the number of jumps of the Poisson process at time $T$. 

At this stage, unfortunately, one cannot perform a standard integration by parts formula as in \cite{Nuabook} on the whole time interval $[0,T]$ for various reasons. For example, the Skorokhod integral of the product of weights $\prod_{{i}=1}^{N_T+1}  \I_{D_{i,N_T}}\bar{\theta}_i$ will inevitably involve the Malliavin derivatives of $\bar{\theta}_i$ and the indicator function $\I_{D_{i,N_T}}$, which in turn will raise integrability problems of the resulting Malliavin weight. 

The key idea that we use consists in performing IBP formulae locally on each random intervals $[\zeta_i, \zeta_{i+1}]$, for $ i\in\bar{\mathbb{N}}_n $ on the set $ \{N_T=n\} $, that is, by using the noise of the Markov chain on this specific time interval and then by combining them in an suitable way. The case $ i=n $ is easy to consider because, by taking the conditional expectation $\E_{n,n}[.]$ in the original probabilistic representation on the set $\left\{N_T=n\right\}$, the IBP reduces to $\E_{n,n}[f'(\bar{X}_{N_T+1}) \I_{D_{n+1,n}} \bar{\theta}_n] = \E_{n,n}[f(\bar{X}_{N_T+1}) \I_{D_{n+1,n}} \mathcal{I}_{n+1}(\bar{\theta}_{n+1})]$ by \eqref{eq:IBP} and the fact that $f(L) = 0$. However, performing the IBP formula on a random interval $[\zeta_{i}, \zeta_{i+1}]$ for $0\leq i<n$ is more challenging. 

In order to do it, our first ingredient consists in transferring the derivative operator appearing on the test function $f$ backward in time from the last interval to the interval on which we perform the local IBP, say $[\zeta_{i},\zeta_{i+1}]$. This operation will unfortunately generate a boundary term each time a transfer is performed, i.e. one for each time interval $[\zeta_{j}, \zeta_{j+1}]$, $j=i,\cdots, n$. As previously mentioned, the boundary terms will induce integrability problems. In order to circumvent this issue, we then introduce our second ingredient which consists in performing a \emph{boundary merging procedure} of this term using the time randomness provided by the Poisson process. We refer the reader to Section \ref{sec:conv} for a more detailed discussion of this issue. Then, an additional ingredient that we use is a time merging lemma about the reduction of jump times of the Poisson process. It is described in Lemma \ref{lem:Unif} and directly employed in the proof of our main result, namely Theorem \ref{prop:5.2}. 
   
  As already explained above, the last ingredient consists in combining these various local IBP formulae in a suitable way. Roughly speaking, we consider a weighted sum of each integral operator, the weight being the length of the corresponding time interval. 

\subsection{The transfer of derivative lemma}
\label{sec:td}
\begin{lem}
	\label{lem:5.1}
	Let $f\in \mathscr{C}^1_p(\mathbb{R} )$ and $n\in \bar{\N}$. The following transfer of derivative formula holds for $ i\in\bar{\mathbb{N}}_{n-1} $ :
\begin{align}
\label{eq:5.1a}
 \E_{i,n}\big[\partial_{\bar{X}_{i+1}}f(\bar X_{i+1})\I_{D_{i+1,n}}
		\bar{\theta}_{i+1}\big]   =& \partial_{\bar{X}_i}\mathbb{E}_{i,n}\big[f(\bar X_{i+1}) 
		\I_{D_{i+1,n}} \ltheta^{e}_{i+1}\big] \\
		& \quad + \mathbb{E}_{i,n}\big[f(\bar X_{i+1})
		\big(\I_{D_{i+1,n}}
		\ltheta^c_{i+1}
		 +\delta_L(\bar{X}_{i+1})\ltheta^{\partial}_{i+1}\big)\big].\nonumber 
\end{align}
	
	\noindent where the r.v.'s $(\ltheta^e_{i+1},\ltheta^c_{i+1},\ltheta^\partial_{i+1}) \in {\mathbb{S}}_{i+1,n}(\bar{X}) $ are defined by
	\begin{align}
		\nonumber
		\ltheta^e_{i+1}:=&
		2\lambda^{-1}\left ({\mathcal{I}}_{i+1}^2(d_2^{i+1})
		+{\mathcal{I}}_{i+1}(d^{i+1}_1)
		\right ),
		\\
		\label{eq:defDc}
		{{\ltheta}}_{i+1}^c:=&
		{{\mathcal{I}}}_{i+1}\left(
		\bar\theta_{i+1}-(2\rho_{i+1}-1)
		\ltheta^e_{i+1}
		\right) -\partial_{\bar{X}_i}
		\ltheta^e_{i+1}
		-
		\sigma'_i{{\mathcal{I}}}_{i+1}\left(Z_{i+1}
		\ltheta^e_{i+1}
		\right) ,\\
			\nonumber
		{{\ltheta}}_{i+1}^\partial:=&
		2(2\rho_{i+1}-1)\lambda^{-1}(a'(L)-b(L)){\mathcal{I}}_{i+1}(1),\\
		d_1^{i+1}:=&c_1^{i+1}-(2\rho_{i+1}-1)\partial_{\bar{X}_{i}}c_2^{i+1},\label{d1}\\
		d_2^{i+1}:=&c_2^{i+1}.\label{d2}
	\end{align}

	Assume additionally that $f(L)=0$. Then, one has 
	\begin{align}
	\label{eq:sdf}
		 \E_{n,n}\big[\partial_{\bar{X}_{n+1}}f(\bar X_{n+1})\I_{D_{n+1,n}} \bar {\theta}_{n+1}\big]   
		& =  \partial_{\bar{X}_{n}} \E_{n,n}\big[f(\bar X_{n+1}) 
		\I_{D_{n+1,n}} {\ltheta}^e_{n+1}\big] \\
		& \quad + \E_{n,n}\big[f(\bar X_{n+1})
		\I_{D_{n+1,n}} {\ltheta}^c_{n+1}\big] \nonumber  
	\end{align}
	with $ \ltheta^\partial_{n+1}:=0 $,
	 ${\ltheta}^e_{n+1}:=2e^{\lambda T}$ and ${\ltheta}^c_{n+1}:=-2e^{\lambda T} (\sigma'\sigma)_{n} (T-\zeta_{n}){\mathcal{I}}^2_{n+1}(1)\in \mathbb{S}_{n+1,n}$.	 
	 With the above definitions, 
		 $x\mapsto \E_{i,n}[f(\bar X_{i+1}) \I_{D_{i+1,n}} \ltheta^e_{i+1} \, | \bar{X}_i = x] \in \mathscr{C}^1_p(\mathbb{R} )$, a.s. for $i\in \bar{\mathbb{N}}_n$. Moreover, one has that 
		 $ \ltheta^a_{i+1} $, for $ a\in\{e,c,\partial\} $ satisfies the time degeneracy estimates for $ i\in \bar{\mathbb{N}}_n $.
		
\end{lem}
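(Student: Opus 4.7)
The plan is to start from the left-hand side and peel a full derivative $\partial_{\bar X_i}$ off $f'(\bar X_{i+1})$, then move it outside $\E_{i,n}$ by Leibniz, carefully accounting for two sources of extra terms: the discontinuity of $\I_{\{\bar X_{i+1}\geq L\}}$ (which produces $\delta_L$-boundary contributions) and the flow dependence of $\ltheta^e_{i+1}$ on $\bar X_i$. The key algebraic input, verified by direct expansion using $(2\rho_{i+1}-1)^2=1$ and the definitions \eqref{d1}, \eqref{d2}, is the identity
\begin{equation*}
\bar\theta_{i+1} \,=\, (2\rho_{i+1}-1)\,\ltheta^e_{i+1} \,+\, 2\lambda^{-1}\mathcal{I}_{i+1}\bigl(\partial_{\bar X_i} c_2^{i+1}\bigr),
\end{equation*}
which cleanly separates the part of $\bar\theta_{i+1}$ that will carry the main derivative from the residual integration-by-parts piece.

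Plugging this decomposition into the LHS, I would handle the $(2\rho_{i+1}-1)\ltheta^e_{i+1}$ part via the chain-rule identity $f'(\bar X_{i+1})(2\rho_{i+1}-1)=\partial_{\bar X_i}f(\bar X_{i+1})-\sigma'_i Z_{i+1}f'(\bar X_{i+1})$ (immediate from \eqref{eq:fd}), and the residual $\mathcal{I}_{i+1}(\partial_{\bar X_i}c_2^{i+1})$ part by the IBP formula derived from classical Gaussian IBP on $[L,\infty)$,
\begin{equation*}
\E_{i,n}\!\bigl[f'(\bar X_{i+1})H\I_{D_{i+1,n}}\bigr] \,=\, \E_{i,n}\!\bigl[f(\bar X_{i+1})\I_{D_{i+1,n}}\mathcal{I}_{i+1}(H)\bigr] \,-\, f(L)\,\E_{i,n}\!\bigl[\delta_L(\bar X_{i+1})H\bigr],
\end{equation*}
which extends \eqref{eq:IBP} with the indicator-induced boundary term. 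Next, I would rewrite $\E_{i,n}[\partial_{\bar X_i}f(\bar X_{i+1})\I_{D_{i+1,n}}\ltheta^e_{i+1}]$ by Leibniz using $\partial_{\bar X_i}\I_{\{\bar X_{i+1}\geq L\}}=\delta_L(\bar X_{i+1})\partial_{\bar X_i}\bar X_{i+1}$, isolating $\partial_{\bar X_i}\E_{i,n}[\cdots]$ on one side. The leftover $\sigma'_iZ_{i+1}f'(\bar X_{i+1})$ term is then also processed by the above IBP. At this stage all $f'$ contributions have been eliminated; the boundary terms carrying $\sigma'_iZ_{i+1}$ cancel between the Leibniz and IBP outputs since $f(\bar X_{i+1})\delta_L(\bar X_{i+1})=f(L)\delta_L(\bar X_{i+1})$. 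The surviving interior coefficient of $f(\bar X_{i+1})\I_{D_{i+1,n}}$ is $2\lambda^{-1}\mathcal{I}^2_{i+1}(\partial_{\bar X_i}c_2^{i+1})-\partial_{\bar X_i}\ltheta^e_{i+1}-\sigma'_i\mathcal{I}_{i+1}(Z_{i+1}\ltheta^e_{i+1})$, which by the decomposition identity above is precisely $\ltheta^c_{i+1}$ from \eqref{eq:defDc}.

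The case $i=n$ is notably simpler: $\bar\theta_{n+1}=(2\rho_{n+1}-1)\ltheta^e_{n+1}$ with constant $\ltheta^e_{n+1}=2e^{\lambda T}$, so $\partial_{\bar X_n}\ltheta^e_{n+1}=0$, and the assumption $f(L)=0$ annihilates every boundary term. The chain rule and IBP then produce two pieces, the first being $\partial_{\bar X_n}\E_{n,n}[f(\bar X_{n+1})\I_{D_{n+1,n}}\ltheta^e_{n+1}]$ and the second, after using the identity $\mathcal{I}_{n+1}(Z_{n+1})=\sigma_n(T-\zeta_n)\mathcal{I}^2_{n+1}(1)$ obtained from the extraction formula \eqref{eq:exta}, giving exactly the stated $\ltheta^c_{n+1}=-2e^{\lambda T}(\sigma'\sigma)_n(T-\zeta_n)\mathcal{I}^2_{n+1}(1)$. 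For the $\mathscr{C}^1_p$-regularity in $x$, dominated convergence applied to the Gaussian conditional density handles everything, given the smoothness and at most polynomial growth of $f$ and of each smooth weight. The time-degeneracy estimates follow by invoking Lemma \ref{lem:6} factor by factor, using Remark \ref{rem:3}(ii) to identify that $c_1, \partial_{\bar X_i}c_2 \in\mathbb{M}(0)$ and $c_2\in\mathbb{M}(1/2)$.

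The main obstacle is the identification of the residual boundary piece with $\ltheta^\partial_{i+1}$. After the cancellations, the surviving coefficient of $f(\bar X_{i+1})\delta_L(\bar X_{i+1})$ is $-(2\rho_{i+1}-1)\ltheta^e_{i+1}-2\lambda^{-1}\mathcal{I}_{i+1}(\partial_{\bar X_i}c_2^{i+1})$, and one must show that when enforced at $\bar X_{i+1}=L$ via $\delta_L$ it reduces to $2(2\rho_{i+1}-1)\lambda^{-1}(a'(L)-b(L))\mathcal{I}_{i+1}(1)$. This requires expanding $\mathcal{I}_{i+1}(d_1^{i+1})$ and $\mathcal{I}^2_{i+1}(d_2^{i+1})$ via the extraction formula \eqref{eq:exta}, substituting $\bar X_{i+1}=L$, and observing a chain of cancellations between the $a'_i$-pieces coming from $\partial_{\bar X_i}c_2^{i+1}$ and those generated by the $\mathcal{D}$-operators acting on $a(\bar X_{i+1})$. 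A close second obstacle is the time-degeneracy for $\ltheta^c_{i+1}$, where the a priori order-$(-1)$ piece $\mathcal{I}^2_{i+1}(\partial_{\bar X_i}c_2^{i+1})$ must be paired with matching contributions from $\partial_{\bar X_i}\ltheta^e_{i+1}$ (exploited through Lemma \ref{chain:rule}) and $\sigma'_i\mathcal{I}_{i+1}(Z_{i+1}\ltheta^e_{i+1})$ to expose the cancellation yielding the desired $\mathbb{M}_{i+1,n}(\bar X,-1/2)$ bound.
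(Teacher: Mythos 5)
Your overall architecture is sound and essentially a reorganization of the paper's own computation: the decomposition $\bar\theta_{i+1}=(2\rho_{i+1}-1)\ltheta^e_{i+1}+2\lambda^{-1}\mathcal{I}_{i+1}(\partial_{\bar X_i}c_2^{i+1})$ is correct (the paper arrives at the same algebra by subtracting the first right-hand term from the left-hand side and invoking Lemma \ref{chain:rule}), your treatment of the last interval is right, and you correctly locate the cancellation of the order-$(-1)$ term $\mathcal{I}^2_{i+1}(\partial_{\bar X_i}c_2^{i+1})$ against the matching piece of $\partial_{\bar X_i}\ltheta^e_{i+1}$. However, there is a genuine gap in the step you yourself flag as the main obstacle: the identification of the boundary weight. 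By your own decomposition, the surviving coefficient of $f(\bar X_{i+1})\delta_L(\bar X_{i+1})$ is exactly $-\bar\theta_{i+1}$, and after extraction and substitution of $\bar X_{i+1}=L$ one finds
\begin{align*}
-\bar\theta_{i+1}\Big|_{\bar X_{i+1}=L}=2(2\rho_{i+1}-1)\lambda^{-1}\Big((a'(L)-b(L))\mathcal{I}_{i+1}(1)+b'(L)-\tfrac12 a''(L)-\tfrac12(a(L)-a(\bar X_i))\mathcal{I}^2_{i+1}(1)\Big).
\end{align*}
The last three terms do \emph{not} cancel algebraically: there is no ``chain of cancellations between the $a'_i$-pieces'' that removes them. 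They vanish only in conditional expectation, because conditionally on $\{\bar X_{i+1}=L\}$ the pair $(Z_{i+1},2\rho_{i+1}-1)$ takes the symmetric values $\pm(\sigma_i^{-1}(L-\bar X_i),1)$ with equal weight, so that $(2\rho_{i+1}-1)$ times any even function of $Z_{i+1}$ (such as $1$ or $\mathcal{I}^2_{i+1}(1)=\mathcal{H}_2(\cdot)$) integrates to zero. This is precisely the reduction mechanism of Lemma \ref{lem:2.1} and Corollary \ref{cor:2.1}, which the paper invokes through \eqref{eq:coeff}; your proposal never appeals to it and would stall at this point.

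This omission matters beyond mere bookkeeping: discarding the $\mathcal{I}^2_{i+1}(1)$ boundary contribution is also what allows $\ltheta^\partial_{i+1}$ to contain only $\mathcal{I}_{i+1}(1)$ and hence to satisfy the time degeneracy estimate of order $(\zeta_{i+1}-\zeta_i)^{-1/2}$ rather than $-1$, which is needed for the subsequent boundary merging Lemmas \ref{lem:4.3}--\ref{lem:4.3a} to be applicable. To repair the proof, insert an appeal to the symmetry of the conditional law of $(Z_{i+1},\rho_{i+1})$ on $\{\bar X_{i+1}=L\}$ (Corollary \ref{cor:2.1}) at the stage where you evaluate the residual boundary coefficient; the rest of your argument then goes through.
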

The proof of Lemma \ref{lem:5.1} is postponed to Appendix \ref{app:tl}. The transfer of derivatives procedure starts on the last time interval $ [\zeta_{n},T] $ according to formula \eqref{eq:sdf}. It expresses the fact that the derivative operator $ \partial_{\bar{X}_{n+1}} $ on the left hand side of the equation is transferred to a flow derivative $ \partial_{\bar{X}_n} $ of the conditional expectation on the right hand side of \eqref{eq:sdf}. Remark that the derivative of $ f $ has been written ubiquitously as $ \partial_{\bar X_{n+1}}f(\bar{X}_{n+1}) $ and that exceptionally in the last time interval there is no boundary term due to the assumption $ f(L)=0 $.

 Then, by the Markov property, the first conditional expectation appearing on the right hand side of \eqref{eq:5.1a} can be expressed as a function of $ \bar{X}_n $ which will be the new test function that will be used in \eqref{eq:5.1a} for the case $ i=n-1>0 $ and so on.

  The transfer of derivatives for other time intervals is obtained in \eqref{eq:5.1a}. This formula in comparison with \eqref{eq:sdf} has a boundary term which is denoted by $ \ltheta^\partial_{i+1}$. In this fashion, various transfer of derivatives formulae can be obtained by transferring successively the derivative operator through all intervals backward in time. The left pointing arrow appearing on the top of the notation $\ltheta^a_{i+1}$, for $a\in\{e,c,\partial\}$ expresses the fact that we are performing a transfer of derivatives argument backward in time. 

\hspace{2cm}
\begin{figure}[H]
\begin{tikzpicture}
\node (1) at (0,0) {};
\node (2)  at (3,0)  {$ \partial_{\bar{X}_{i}} $};
\node (3) at (6,0) {$ \partial_{\bar{X}_{i+1} }$};
\node (4) at (9,0)  {};
\draw[->] (1)--(2);
\draw[->] (2)--(3)
node[pos=.5,above] {$\ltheta^e_{i+1}$}; 
\draw[->] (3)--(4);
\node (5) at (3,-1) {$ \bigstar_{i} $};
\node (6) at (3,-2) {$ \bigstar_{i} $};
\node (7) at (1.5,0.3) {$ \theta_i $};
\node (8) at (7.5,0.3) {$ \ltheta^e_{i+2} $};
\draw[->] (5) -- (3)
node[pos=.3,sloped,above] {$\ltheta^c_{i+1}$};
\draw[->,blue] (6) -- (3)
node[pos=.5,sloped,below] {$\ltheta^\partial_{i+1}$};
\draw[->,very thick, red] (6.0,0.6) to [out=125,in=45] (3,0.65);
\end{tikzpicture}
\caption{The dynamics of the transfer of derivatives formula}
\label{fig:2}
\end{figure}
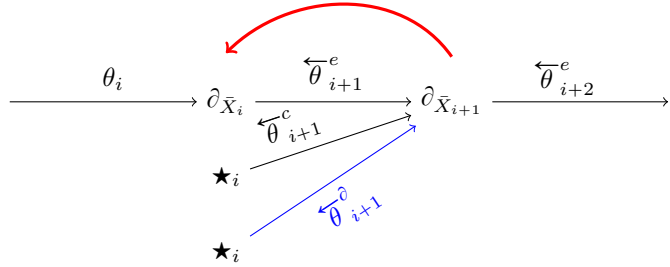

%
%
%
As explained with the transfer of derivatives for the last time interval, we also see that the derivative $ \partial_{\bar{X}_{i+1}} $ is transferred (this is the meaning of the left pointing red arrow in the above figure) to become the derivative $ \partial_{\bar{X}_i} $ changing the weight $ \bar\theta_{i+1} $ into $\ltheta^e_{i+1}$ ($e$ for exchange) but at the expense of creating extra terms denoted by $ \ltheta_{i+1}^c $ ($ c $ for correction) and a boundary term $ \ltheta^\partial_{i+1} $ coming from the boundary (denoted by $\partial$ in what follows). All these weights satisfy the same moment estimates as \eqref{moment:estimates:prob:representation}.

In the above figure, we can see the structure of the transfer of derivatives will generate two branches of a tree at each time Lemma \ref{lem:5.1} is applied. The blue arrow means that the corresponding term generated by the transfer of derivative is a degenerate boundary term that needs to be addressed because the weight r.v. $ \ltheta^\partial_{i+1} $ is multiplied by a Dirac delta distribution. This will be solved using the second ingredient to be introduced in the next section: {\it the boundary merging lemmas}. The term $ \bigstar_i $ standing for the product $ \bigstar_i:=\prod_{\ell=1}^i\I_{D_{\ell,n}}\bar{\theta}_\ell $ means that no more transfer of derivatives is required for these two terms because the derivative operator no longer appears on the test function $f$ for the terms associated to $ \ltheta^c_{i+1} $ and $ \ltheta^\partial_{i+1} $ in \eqref{eq:5.1a} .


We remark that the last term in \eqref{eq:5.1a} is singular but well defined if the time variables are fixed which is the case here as there is a conditioning with respect to the Poisson process expressed with the conditional expectations $ \E_{i,n} $ in \eqref{eq:5.1a}. Since the boundary weight $ \ltheta^\partial_{i+1}$ is multiplied by $\delta_L(\bar{X}_{i+1})$ the time degeneracy estimate for the product $\delta_L(\bar{X}_{i+1})  \ltheta^\partial_{i+1}$ deteriorates so that the resulting probabilistic representation will not be exploitable without the extra ingredient of the boundary merging lemmas. A further study is thus required before proving moment estimates.
We will discuss this matter in detail in the next section.


\subsection{The boundary merging lemmas}
\label{sec:conv}
The second ingredient that we need in order to establish our first IBP formula is the boundary merging lemma. Though the sequence of weights $\big\{ \ltheta^\partial_{i},  i \in\mathbb{N}_{N_T}  \big\}$ satisfies the time degeneracy estimate stated in Lemma \ref{lem:5.1}, which is similar to the one satisfied by the sequence $\left\{\bar\theta_i, 
i \in\mathbb{N}_{N_T} \right\}$ in \eqref{moment:estimates:prob:representation}, the fact that it is multiplied by $ \delta_L(\bar{X}_{i})$ increases its time degeneracy by a factor of $(\zeta_{i}-\zeta_{i-1})^{-1/2}$.

 The key idea in order to circumvent this issue consists in using the time randomness provided by the Poisson process in order to smooth this singularity. This will eventually allow us to retrieve a time degeneracy estimate similar to \eqref{moment:estimates:prob:representation}. We call this ingredient {\it the boundary merging lemmas}.

To be more specific, in order to circumvent the time degeneracy problem related to the boundary term on the right hand side of \eqref{eq:5.1a}, one needs to consider two successive random time intervals, say $[\zeta_{i-1}, \zeta_i]$ and $[\zeta_i, \zeta_{i+1}]$, and take expectations with respect to the intermediate time $\zeta_i$ in order to regularize the term $ \delta_L(\bar{X}_{i}) $ that appears when one transfers the derivative using \eqref{eq:5.1a} on the time interval $[\zeta_{i-1}, \zeta_{i}]$. This operation will remove the time singularity induced by the Dirac delta distribution but will also reduce the number of jumps by one unit.

An important remark is that the boundary merging lemmas are not needed in the case that there are no jumps in the interval $ [0,T] $, that is, on the set $\left\{N_T =0\right\}$, because of the condition $ f(L)=0 $.

From the application of the boundary merging lemmas, a new Markov chain structure appears which we call the {\it merged boundary (one-step) Markov chain }$\bar{X}^\partial$ whose transition on the time interval
 $[\zeta_{j},\zeta_{i+1}]$, for $j=i-1, \, i$, is given by
\begin{align}
\label{eq:dbp}
\bar{X}^\partial_{j, i+1}\equiv \bar{X}^{\partial}_{j,i+1}(\bar{X}_{j}) := &L\left(1-\mu(\bar{X}_{j})\right)+\bar{X}_{j}
\mu(\bar{X}_{j})
+\sigma(L) Z_{j, i+1}, \quad Z_{j, i+1} := \sum_{k=j+1}^{i+1}Z_{k},
\end{align}


\noindent with $\mu(x):=- \sigma(L)/\sigma(x)$. 

In the above one-step dynamics and in what follows, the index $j= i-1$ will be used to indicate the boundary merging of the underlying Markov chain $\bar{X}$ on the two consecutive time intervals $[\zeta_{i-1}, \zeta_i]$ and $[\zeta_i, \zeta_{i+1}]$ while the index $j=i$ will be used in a second stage once the reduction of jumps (a.k.a. {\it time merging}) is performed in Section \ref{sec4.3}. As the proof of time degeneration estimates are similar and in order to simplify the presentation, we will deal with both cases at the same time in the next two Lemmas.

In the same way as it was done in Section \ref{sec:3a}, we define the associated space of smooth r.v.'s.
\begin{definition}
		For $ i\in\bar{\mathbb{N}}_n$ and $ j\in\{i-1,i\} $, we define the set $ {\mathbb{S}}_{j,i+1,n}(\bar{X}^\partial) $ as the subset of r.v.'s $ H\in \mathbb{L}^0 $ such that there exists a measurable functions $ h:\mathbb{R}^2\times \{0,1\}\times A_{i+1-j}\rightarrow \mathbb{R} $ satisfying
	\begin{enumerate}
		\item the random variable $H$ can be written as $\displaystyle{ H=
			h(\bar{X}_{j},\bar{X}^\partial_{j,i+1},\rho_{i+1},\zeta_{j},\zeta_i,\zeta_{i+1}) 
		}
		$ on the set $ \{N_T=n\} $. 
		\item For any $ r\in\{0,1\} $ and any $ s\in A_{i+1-j}$, one has $ h(\cdot,\cdot,r,s)\in\mathscr{C}_p^\infty(\mathbb{R}^2) $.
	\end{enumerate}
\end{definition}

Similarly to the operators ${\mathcal{D}}$ and ${\mathcal{I}}$, defined in Section \ref{sec:3a}, related to the Markov chain $\bar{X}$, we introduce the operators $ \bar{{\mathcal{D}}}$ and $\bar{{\mathcal{I}}} $ associated to the merged boundary process $ \bar{X}^\partial $ defined above. For any (smooth) r.v. $ H\in  {\mathbb{S}}_{j,i+1,n}(\bar{X}^\partial)$ and $\ell\geq 1$, we let
\begin{align*}
\bar{{\mathcal{I}}}_{j,i+1}(H):=&
H \frac{Z_{j,i+1}}{\sigma(L)(\zeta_{i+1}-\zeta_{j}) } - {\bar{\mathcal{D}}_{i+1} H}, \quad \bar{{\mathcal{I}}}^{\ell+1}_{j,i+1}(H)  := \bar{{\mathcal{I}}}^{\ell}_{j,i+1}(\bar{{\mathcal{I}}}_{j,i+1}(H))\\
\bar{\mathcal{D}}_{i+1} H:=&\partial_2h(\bar{X}_{j},\bar{X}^\partial_{j,i+1}, \rho_{i+1}, \zeta_j,\zeta_i,\zeta_{i+1}),  \quad \bar{\mathcal{D}}^{\ell+1}_{i+1} H :=  \bar{\mathcal{D}}^{\ell}_{i+1} (\bar{\mathcal{D}}_{i+1} H).
\end{align*}

Observe that since $Z_{j,i+1} = \sigma^{-1}(L)( \bar{X}^{\partial}_{j,i+1}- (L\left(1-\mu(\bar{X}_{j})\right)+\bar{X}_{j}\mu(\bar{X}_{j})) )\in {\mathbb{S}}_{j,i+1,n}(\bar{X}^{\partial})$, it is clear that the r.v.'s $\bar{{\mathcal{I}}}_{j,i+1}(1)$ and $\bar{{\mathcal{I}}}^2_{j,i+1}(1)$ also belong to ${\mathbb{S}}_{j,i+1,n}(\bar{X}^{\partial})$ so that they are explicit functions of the variables $\bar{X}_{j}, \, \bar{X}^{\partial}_{j,i+1}$, $\rho_{i+1}$, $\zeta_{j}, \, \zeta_{i}$ and $ \zeta_{i+1} $. 

In the following result, we will also use new weights $ \ltheta^{\partial *e}_{j,i+1}$, for $j= i -1 , i$, obtained by a merging procedure of the two weights $\ltheta^{\partial}_{i}$ and $\ltheta^{e}_{i+1}$ on the boundary set $\left\{ \bar{X}_i = L\right\}$
\begin{align*}
\ltheta^{\partial *e}_{j,i+1}:=&
4\lambda^{-1}\frac{a'(L)-b(L)}{a_{j}} \left(\bar{{\mathcal{I}}}_{j,i+1}^2(\bar{d}^{i+1}_2) + \bar{{\mathcal{I}}}_{j,i+1}(\bar{d}^{i+1}_1) \right)
\end{align*}
\noindent with 
\begin{align}
\bar d^{i+1}_1 & :=(b-a')(\bar{X}^{\partial}_{j,i+1}), \quad \bar d^{i+1}_2   := \frac12 (a(\bar{X}^{\partial}_{j,i+1}) - a(L)) \label{coefficients:2:weights:d:bar}
\end{align}
\noindent for $i=0, \cdots, n-1$. For the last interval, on $ \{N_T=n\} $, we have ${\ltheta}^{\partial*e}_{j,n+1} := 4e^{\lambda T}\frac{a'(L)-b(L)}{a_{j}}$, for $j=n-1, n$.

The cases $ j=i-1 $ for \eqref{coefficients:2:weights:d:bar} and $ j=n-1 $ for $ {\ltheta}^{\partial*e}_{n-1,n+1} $ are used in the following result where we perform the boundary merging procedure. The remaining cases $ j=i $ for $ \ltheta^{\partial *e}_{i,i+1} $ and $ j=n $ for  ${\ltheta}^{\partial*e}_{n,n+1} $ will be used once time merging is performed in Section \ref{sec:!IBP}. For this reason, some properties that we will employ later appear in the next lemma for these cases.

The definition of the time degeneracy estimate in the sense of Definition \ref{def:td} is naturally extended to this case if we define
\begin{align*}
D^\partial_{j,i+1,n}:=\{\bar{X}^\partial_{j,i+1}\geq L, N_T=n\}, \quad j=i-1, i
\end{align*} 

\noindent and replace \eqref{eq:td} for $ H\in \mathbb{S}_{j,i+1,n}(\bar{X}^\partial) $ by 
\begin{align}
\label{time:degeneracy:estimate:boundary:process}
\forall p\geq1, \quad \I_{D_{j,n}}\E\left[\I_{D^\partial_{j,i+1,n}}|H|^p\Big| \mathcal{G}_{j},\zeta_{i+1},N_T=n\right]\leq C(\zeta_{i+1}-\zeta_j)^{-\frac p2}, \, i\in\mathbb{N}_{n-1}, \, j=i-1,i 
\end{align}

\noindent and $ \I_{D_{j,n}}\E\left[\I_{D^\partial_{j,n+1,n}}|H|^p \Big|  \mathcal{G}_{j}, N_T=n\right] \leq C$ for $j=n-1,\, n$. 

We are now in position to provide the second ingredient in order to establish our first IBP formula, namely the boundary merging lemma. Its proof is postponed to Section \ref{proof:lemma:merging}.


%


\begin{lem}
	\label{lem:4.3}
	Let $ f\in \mathscr{C}^0_p(\mathbb{R})$ and $n\in \bar{\N}$. The following property is satisfied for any $ i\in\mathbb{N}_{n-1} $ 
	\begin{align}
	\label{eq:faim}
	 & \E\big[f(\bar{X}_{{i+1}})\I_{D_{i+1,n}} \ltheta^e_{{i+1}}\delta_L(
	\bar{X}_{i})\ltheta^{\partial}_{{i}} 
	\,\vert\, \mathcal{G}_{{i-1}}, \zeta_{i+1},N_T = n\big]\\
	\quad & = 
	\frac{\lambda^{-1}}{\zeta_{i+1}-\zeta_{i-1}}  \E\big[f(\bar{X}^{\partial}_{i-1,{i+1}})\I_
	{D_{i-1,i+1,n}^\partial}\ltheta^{\partial *e}_{{i-1},{i+1}}
	\,\vert\, \mathcal{G}_{{i-1}}, \zeta_{i+1},N_T= n\big].\nonumber
	\end{align}

Similarly, for the last time interval, one has 
	\begin{align}
	\label{eq:faim:last:interval1}
	&  \E\big[f(\bar{X}_{n+1})\I_{D_{n+1,n}} {\ltheta}^e_{n+1}\delta_L(
	\bar{X}_{n})
	\ltheta^{\partial}_{{n}} 
	\,\vert\,\mathcal{G}_{{n-1}},N_T=n\big] \\\nonumber
	 & \quad = 
	\frac{\lambda^{-1}}{T-\zeta_{n-1}}
	\E\big[f(\bar{X}^{\partial}_{n-1,n+1})\I_{D_{n-1,n+1,n}^\partial} {\ltheta}^{\partial *e}_{n-1,n+1 }\,\vert\,\mathcal{G}_{{n-1}},N_T=n\big].
	\end{align}
	
	\noindent Here, $ |{\ltheta}^{\partial*e}_{j,n+1}|\leq C $ a.s. for $ j=n-1,n $. Moreover, with the above definitions $ \ltheta^{\partial *e}_{j,i+1} \in {\mathbb{S}}_{j,i+1,n}(\bar{X}^{\partial}) $, $ j=i-1,i $ and 
	$ \ltheta^{\partial *e}_{j,i+1}  $ satisfies the time degeneracy estimates in the sense of \eqref{time:degeneracy:estimate:boundary:process}.
	
	\end{lem}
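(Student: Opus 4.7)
The plan is to integrate out the intermediate variables $(Z_i,\rho_i,\zeta_i,Z_{i+1},\rho_{i+1})$ in three stages: first unfold $\delta_L(\bar X_i)$ via the conditional density, then exploit the reflection structure on $\{\bar X_i = L\}$ to simplify the weights, and finally combine the $\zeta_i$-integration with a Gaussian mass reorganization to produce the merged chain.

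\emph{Step 1 (unfolding $\delta_L$).} Condition further on $\zeta_i$. By the duality \eqref{eq:IBP}, the factor $\delta_L(\bar X_i)$ becomes the conditional density $g(a_{i-1}(\zeta_i-\zeta_{i-1}), L-\bar X_{i-1})$ times a conditional expectation given $\{\bar X_i=L\}$. On this event the reflection $\bar X_i = \rho_i\bar X_{i-1} + (1-\rho_i)(2L-\bar X_{i-1})+\sigma_{i-1}Z_i$ enforces $\sigma_{i-1}Z_i=(2\rho_i-1)(L-\bar X_{i-1})$, whence $(2\rho_i-1)\mathcal{I}_i(1)|_{\bar X_i=L}=(L-\bar X_{i-1})/(a_{i-1}(\zeta_i-\zeta_{i-1}))$ is independent of $\rho_i$; consequently $\ltheta^\partial_i|_{\bar X_i=L}=2\lambda^{-1}(a'(L)-b(L))(L-\bar X_{i-1})/(a_{i-1}(\zeta_i-\zeta_{i-1}))$ is $\rho_i$-free. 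Similarly, on $\{\bar X_i=L\}$ one has $\bar X_{i+1}=L+\sigma(L)Z_{i+1}$ independently of $\rho_{i+1}$ and $\I_{D_{i+1,n}}=\I_{\{Z_{i+1}\geq 0\}}$.

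\emph{Step 2 (integrating out $\zeta_i$ via Gaussian merging).} On $\{N_T=n\}$, conditionally on $\zeta_{i-1}$ and $\zeta_{i+1}$, the intermediate time $\zeta_i$ is uniformly distributed on $(\zeta_{i-1},\zeta_{i+1})$ (classical order-statistic property of the Poisson jumps), with density $1/(\zeta_{i+1}-\zeta_{i-1})$, which accounts for the $\lambda^{-1}/(\zeta_{i+1}-\zeta_{i-1})$ prefactor (the $\lambda^{-1}$ coming from $\ltheta^\partial_i$). The analytic engine is then the identity (derived by the substitution $v=(\zeta_i-\zeta_{i-1})/(\zeta_{i+1}-\zeta_i)$ and the classical formula $\int_0^\infty v^{-3/2} e^{-p/v - qv}\,dv=\sqrt{\pi/p}\,e^{-2\sqrt{pq}}$)
\[
\int_{\zeta_{i-1}}^{\zeta_{i+1}}\!\! g(a_{i-1}(\zeta_i-\zeta_{i-1}),L-\bar X_{i-1})\,\frac{L-\bar X_{i-1}}{a_{i-1}(\zeta_i-\zeta_{i-1})}\, g(a(L)(\zeta_{i+1}-\zeta_i),y-L)\,d\zeta_i
= \frac{\mathrm{sgn}(L-\bar X_{i-1})}{a_{i-1}}\, g(a(L)(\zeta_{i+1}-\zeta_{i-1}),y - L - \mu(\bar X_{i-1})(\bar X_{i-1}-L)),
\]
where the exponent match uses $\sigma_2^2/\sigma_1^2 = a(L)/a_{i-1}$ with $\sigma_1=\sigma_{i-1}$, $\sigma_2=\sigma(L)$. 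The right-hand side is (up to the sign and the $a_{i-1}$ factor) exactly the Gaussian density of $\bar X^\partial_{i-1,i+1}$ evaluated at $y$, encoding the merging of the two subintervals of differing variances $a_{i-1}$ and $a(L)$ into a single interval of variance $a(L)(\zeta_{i+1}-\zeta_{i-1})$.

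\emph{Step 3 (identification of the merged weight).} Under the density merging of Step 2, the local integrator $\mathcal{I}_{i+1}$ (acting on the Gaussian $Z_{i+1}$ at variance $a(L)(\zeta_{i+1}-\zeta_i)$) is promoted to the merged integrator $\bar{\mathcal{I}}_{i-1,i+1}$ (acting on $Z_{i-1,i+1}$ at variance $a(L)(\zeta_{i+1}-\zeta_{i-1})$); equivalently, the corresponding duality against $f$ is preserved with $\bar X_{i+1}$ replaced by $\bar X^\partial_{i-1,i+1}$. The coefficients $d^{i+1}_1,d^{i+1}_2$ restricted to $\{\bar X_i=L\}$, averaged over $\rho_{i+1}$ via the cancellation $(2\rho_{i+1}-1)^2=1$, collapse to the merged coefficients $\bar d^{i+1}_1,\bar d^{i+1}_2$ in \eqref{coefficients:2:weights:d:bar}: the $(2\rho_{i+1}-1)\sigma'(L)Z_{i+1}$ cross terms vanish on $\rho_{i+1}$-averaging, while the $(2\rho_{i+1}-1)^2 a'(\bar X_{i+1})$ contribution survives and combines with $c_1^{i+1}=b(\bar X_{i+1})$ to yield $(b-a')(\bar X^\partial)$ after the merging. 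Collecting the two factors $2\lambda^{-1}$ (from $\ltheta^\partial_i$ and $\ltheta^e_{i+1}$) with $1/a_{i-1}$ from Step 2 produces the $4\lambda^{-1}(a'(L)-b(L))/a_{i-1}$ prefactor of $\ltheta^{\partial*e}_{i-1,i+1}$.

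The last-interval identity \eqref{eq:faim:last:interval1} follows the same recipe with $\ltheta^e_{n+1}=2e^{\lambda T}$ constant (no integrator to promote): only the density convolution of Step 2 is required (with the upper limit $T$ in place of $\zeta_{n+1}$), yielding the bounded weight $\ltheta^{\partial*e}_{j,n+1}=4e^{\lambda T}(a'(L)-b(L))/a_j$. The a.s.\ bound $|\ltheta^{\partial*e}_{j,n+1}|\leq C$ is immediate from assumption \textbf{(H)} (boundedness of $a'$ and $b$, uniform ellipticity $a_j\geq\underline a$). Finally, the time degeneracy estimate \eqref{time:degeneracy:estimate:boundary:process} for $\ltheta^{\partial*e}_{j,i+1}$ follows from the $\bar{\mathcal{I}}$-analogue of \eqref{eq:Hesta} at the merged time scale $\zeta_{i+1}-\zeta_j$, together with $\bar d^{i+1}_1\in\mathbb{M}_{j,i+1,n}(\bar X^\partial,0)$ (from boundedness of $b-a'$) and $\bar d^{i+1}_2\in\mathbb{M}_{j,i+1,n}(\bar X^\partial,1/2)$ (via $|\bar d^{i+1}_2|\leq C|\bar X^\partial_{j,i+1}-L|$). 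The principal obstacle is Step 2: establishing the Gaussian merging identity with mixed variances at the integrator level (not only at the density level) and tracking the resulting combinatorial structure to recover precisely the coefficient $4\lambda^{-1}(a'(L)-b(L))/a_{i-1}$.
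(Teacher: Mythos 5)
Your plan is correct and follows essentially the same route as the paper: localize on $\{\bar X_i=L\}$ using the reflection structure so that $(2\rho_i-1)\mathcal{I}_i(1)$ becomes the explicit $\rho_i$-free factor $Y_{i-1}/(a_{i-1}(\zeta_i-\zeta_{i-1}))$, integrate out $\zeta_i$ via its conditional uniform law and the Gaussian time-convolution identity of Lemma \ref{lem:7.1} (which the paper also proves by the same Laplace-transform formula), then use $\rho_{i+1}$-symmetry to collapse $d_1^{i+1}$ into $\bar d_1^{i+1}$ and conclude the time degeneracy estimates from $|\bar d_2^{i+1}|\leq C|Z_{i+1}+Z_i|$. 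The one point you flag as the ``principal obstacle'' — merging at the integrator rather than density level — is handled in the paper exactly as you would hope: the extraction formula \eqref{eq:exta} reduces everything to the atomic identity \eqref{eq:opmerg} for $\mathcal{I}_{i+1}^j(1)$, $j=0,1,2$, and the Hermite-polynomial representation \eqref{eq:link:integral:hermite:pol} turns these into $y$-derivatives of the Gaussian kernel so that the $\partial_y^\ell$ version of \eqref{eq:4.6} applies directly and promotes $\mathcal{I}_{i+1}^j(1)$ to $\bar{\mathcal{I}}_{i-1,i+1}^j(1)$.
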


The above lemma can be illustrated using Figure \ref{fig:3} below. The boundary weight r.v. $ \ltheta^\partial_i $ marked in blue is merged together with the weight $ \ltheta^e_{i+1} $ appearing in the next time interval, by taking expectations with respect to $ \zeta_i $ in \eqref{eq:faim}. This operation leads to a new transition on the time interval $[\zeta_{i-1}, \zeta_{i+1}]$ for the underlying Markov chain. More precisely, the new r.v. $ \bar{X}_{i-1,i+1}^\partial $, and the new weight, $ \ltheta^{\partial*e}_{i-1,i+1} $, both marked in red in the above figure, will replace the symbols in blue: $ \ltheta^\partial_i $ and $ \ltheta_{i+1}^e $.  In this sense, we call $ \ltheta_{i-1,i+1}^{\partial*e} $ the boundary merging of weights $ \ltheta^\partial_i $ and $ \ltheta_{i+1}^e $. 

 Remark that for the branch corresponding to the pair $ (\ltheta_i^e,\ltheta_{i+1}^e) $, no merging procedure is required so that the respective Markov chain elements $ \bar{X}_i $ and $ \bar{X}_{i+1} $ remain unchanged. For this reason, we have chosen to leave the weights $\ltheta^{e}_{i}$ and $\ltheta_{i+1}^e $ in black on the left hand side.

On the right hand side of Figure \ref{fig:3}, we see the result of the boundary merging procedure. Clearly the transition for the merged term has changed from $ \bar{X}_{i-1} $ to $ \bar{X}^\partial_{i-1,i+1} $ and then, for the next time interval $[\zeta_{i+1}, \zeta_{i+2}]$, the transition of the underlying Markov chain remains unchanged and is given by
\begin{align*}
Y_{i+2}:=& \bar{X}_{i+2}(i+1,{{\bar{X}^\partial}_{i-1,i+1}}) \\
=&
\rho_{i+2}{{\bar{X}^\partial}_{i-1,i+1}}+(1-\rho_{i+2}) (2L-{{\bar{X}^\partial}_{i-1,i+1}})+\sigma({{\bar{X}^\partial}_{i-1,i+1}}) Z_{i+2},
\end{align*}  

\noindent and where $ \overleftarrow{\beta}_{i+2}^e:= \ltheta_{i+2}^{e}({{\bar{X}^\partial}_{i-1,i+1}}, Y_{i+2}, \rho_{i+2}, \zeta_{i+1}, \zeta_{i+2})$ stands for the same weight formula as $ \ltheta_{i+2}^e $ but whose starting point is ${{\bar{X}^\partial}_{i-1,i+1}}  $ and its end point is $ Y_{i+2} $. We remind the reader that we freely use function notation for smooth r.v.'s on the spaces $ \mathbb{S}_{i+2,n}(\bar{X}) $ as in \eqref{abuse:notation:space}.

\hspace{3cm}
%

\begin{figure}[H]
	
	\begin{tikzpicture}[scale=0.53]
	\node (a) at (0,0) {$ {\bar{X}}_{i-1}$};
	\node (b) at (4,0) {$ \textcolor{blue}{{\bar{X}}_i} $};
	\node (c) at (8,0) {
		$ \textcolor{blue}{{\bar{X}}_{i+1}} $};
	\node (b1) at (4.4,-0.3){};
	\node (c1) at (7.3,-0.3){};
	\node (f) at (12,0) {$ {\bar{X}}_{i+2} $};
	\node (e) at (8,-1) {$ \textcolor{red}{{\bar{X}^\partial}_{i-1,i+1}} $	};
	\node (d) at (0,-4) {$ \textcolor{red}{{\bar{X}}_{i-1}} $};
	\draw[->] (a) -- (b);
	\draw[->] (c) -- (f)
	node[pos=.5,sloped,above] {$\ltheta_{i+2}^e$};
	\draw[->] (b) -- (c);
	\draw[->,blue] (b1) -- (c1);
	\draw [->, blue]  (d) -- (b) node[pos=.6,sloped,below] {$\ltheta^\partial_i$}; 
	\draw[->,thick, red] (d) -- (c)
	node[pos=.5,sloped,below] {$\ltheta^{\partial*e}_{i-1,i+1}$}; 
	\node at (2,0.5) {$ \ltheta_i^e $};
	\node at (6,0.5) {$ \ltheta^e_{i+1} $};
	\node (a1) at (16,0) {$ {\bar{X}}_{i-1}$};
	\node (b1) at (20,0) {$ {{\bar{X}}_i} $};
	\node (c1) at (24,0) {
		$ {{\bar{X}}_{i+1}} $};
	\node (f1) at (28,0) {$ {\bar{X}}_{i+2} $};
	\node (e3) at (24,-1) {$ {{\bar{X}^\partial}_{i-1,i+1}} $	};
	\node (e1) at (22.8,-0.63){};
	\node (e2) at (28,-1){$ 
Y_{i+2}	 $};
\draw[->,thick] (e3) -- (e2)
node[pos=.5,below] 
{$ \overleftarrow{\beta}_{i+2}^e $};
	\node (d1) at (16,-4) {$ {{\bar{X}}_{i-1}} $};
	\draw[->] (a1) -- (b1);
	\draw[->] (c1) -- (f1)
	node[pos=.5,sloped,above] {$\ltheta_{i+2}^e$};
	\draw[->] (b1) -- (c1);
	\draw[->,thick] (d1) -- (e1)
	node[pos=.5,sloped,below] {$\ltheta^{\partial*e}_{i-1,i+1}$}; 
	\node at (18,0.5) {$ \ltheta_i^e $};
	\node at (22,0.5) {$ \ltheta^e_{i+1} $};
	\end{tikzpicture}
	\caption{Markov chain structure before merging of boundary terms on the left and after merging on the right. }\label{fig:3}
\end{figure}
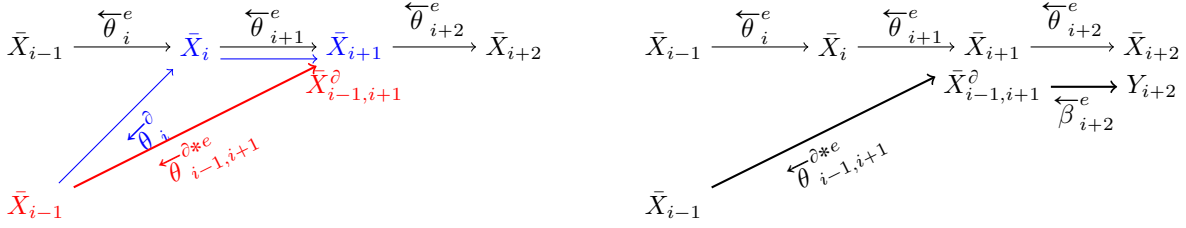

So far, we have explained how to transfer the derivatives and how to deal with boundary terms. The last step consists in performing a local IBP formula on a fixed time interval, say $[\zeta_{i}, \zeta_{i+1}]$. This operation will involve the integral operator applied to  corresponding weight, namely $\mathcal{I}_{i}(\I_{D_{i,n}} \bar{\theta}_{i})$, and thus will inevitably increase the time singularity in the estimate as stated in \eqref{eq:td}. Moreover, by the extraction formula, the Malliavin derivative of $\I_{D_{i,n}}$ will also generate a boundary term that has to be carefully treated by a merging procedure. This is the purpose of the following second boundary merging Lemma that we now describe. 

The proof of the following result is postponed to Section \ref{proof:lemma:merging:ibp}. As the resulting boundary merging weight is different, the notation for the boundary merged weight changes from $ \partial*e $ to $ \partial\circledast e $. The merged boundary process does not change. 

	\begin{lem}
		\label{lem:4.3a}
		Let $ f\in \mathscr{C}^0_p(\mathbb{R})$ and $n \in \bar{\N}$. The following property is satisfied for any $ i\in\mathbb{N}_{n-1} $ 
		\begin{align}
		\label{eq:faima}
	&  \E\left[f(\bar{X}_{{i+1}}) \I_{D_{i+1,n}} \ltheta^e_{{i+1}}\delta_L(\bar{X}_{i})(\zeta_{i} - \zeta_{i-1}) \bar{\theta}_i 
		\,\vert \,\mathcal{G}_{{i-1}}, \zeta_{i+1}, N_T = n  \right]\\
		\quad & = 
		\frac{\lambda^{-1}}{\zeta_{i+1}-\zeta_{i-1}}  \E\big[f(\bar{X}^{\partial}_{i-1,{i+1}})\I_{D^\partial_{i-1,i+1,n}}\ltheta^{\partial  \circledast e}_{{i-1},{i+1}}\,\vert \,\mathcal{G}_{{i-1}}, \zeta_{i+1}, N_T = n \big]\nonumber
		\end{align}
		\noindent where we define the boundary merged weight $ \ltheta^{\partial  \circledast e}_{j, i+1} $ for $j= i-1, i$ as
		\begin{align*}
		\ltheta^{\partial  \circledast e}_{j,i+1}:=&
		4\lambda^{-1}\frac{a'(L)-b(L)}
		{ a_{j}^{3/2}\sigma(L)}(\bar{X}_{j}-L) 
		 \left(\bar{{\mathcal{I}}}_{j,i+1}^2(\hat{d}^{i+1}_2)
		+
		\bar{{\mathcal{I}}}_{j,i+1}(\hat{d}^{i+1}_1)
		\right)
		\end{align*}
\noindent with coefficients given by
		\begin{align*}
		\hat{d}^{i+1}_k:=&\bar{d}^{i+1}_k\times(\bar{\Phi}g^{-1})(a(L)(\zeta_{i+1}-\zeta_{j}),Z_{j,i+1}),\quad k=1,2,\\
		\bar{\Phi}(t,z):=&\int_{|z|}^\infty g(t,y)dy.
		\end{align*}

		Similarly, for the last time interval, one has 
		\begin{align}
		\label{eq:faim:last:interval}
		& \E\big[f(\bar{X}_{n+1}) \I_{D_{n+1,n}}{\ltheta}^e_{n+1}\delta_L(\bar{X}_{n})(\zeta_{n}-\zeta_{n-1})\bar \theta_{n}
		\,\vert \,\mathcal{G}_{{n-1}},N_T=n\big] \\
		&= \frac{\lambda^{-1}}{T-\zeta_{n-1}}  \E\big[f(\bar{X}^{\partial}_{n-1, n+1})
		\I_{D_{n-1,n+1,n}^\partial}
		 {\ltheta}^{\partial  \circledast e}_{n-1,n+1} \,\vert \,\mathcal{G}_{n-1},N_T=n\big]\nonumber
\end{align}
		
\noindent where $  {\ltheta}^{\partial  \circledast e}_{j,n+1 } :=4e^{\lambda T}\frac{2a'(L)-b(L)} { a_{j}^{3/2}\sigma(L)}(\bar{X}_{j}-L) \hat{d}^{n+1}$, $\hat{d}^{n+1}:=(\bar{\Phi}g^{-1})(a(L)(\zeta_{n+1}-\zeta_{j}),Z_{j,n+1} )$ so that $ |{\ltheta}^{\partial  \circledast e}_{j,n+1}|\leq C $ a.s., for $j=n-1,n$.
	
		Moreover, for any $i \in \bar{\N}_{n}$, for any $ j=i-1,i $, $ \ltheta^{\partial  \circledast e}_{j,i+1} \in {\mathbb{S}}_{j,i+1 ,n}(\bar{X}^{\partial}) $ and it satisfies the time degeneracy estimates in the sense of \eqref{time:degeneracy:estimate:boundary:process}.
	\end{lem}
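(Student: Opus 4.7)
The plan is to follow the blueprint of Lemma \ref{lem:4.3}, adapting it to the fact that the boundary weight at step $i$ is now $(\zeta_i-\zeta_{i-1})\bar\theta_i$ rather than $\ltheta^\partial_i$. Compared with Lemma \ref{lem:4.3}, two new ingredients will enter: the integral operators $\mathcal{I}_i$, $\mathcal{I}_i^2$ hidden inside $\bar\theta_i$ (recall \eqref{eq:13aa}), and the extra factor $(\zeta_i-\zeta_{i-1})$. Their combined effect, after integrating the intermediate time $\zeta_i$ out, will produce both the factor $(\bar{X}_{i-1}-L)$ and the Gaussian tail $\bar\Phi$ that appear in $\ltheta^{\partial\circledast e}$.

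First, I would condition on the finer $\sigma$-field $(\mathcal{G}_{i-1},\zeta_i,\zeta_{i+1}, N_T=n)$ and use the Markov property on the two consecutive intervals $[\zeta_{i-1},\zeta_i]$ and $[\zeta_i,\zeta_{i+1}]$. The key structural observation, already used in Lemma \ref{lem:4.3}, is that on the event $\{\bar X_i=L\}$ enforced by $\delta_L(\bar X_i)$, the recursion \eqref{eq:MCa} degenerates to $\bar X_{i+1}=L+\sigma(L)Z_{i+1}$ independently of $\rho_{i+1}$; thus the $\rho_{i+1}$-averaging inside $\ltheta^e_{i+1}$ collapses the weight into the two $\bar d^{i+1}_k$ coefficients of \eqref{coefficients:2:weights:d:bar}. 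Next, inside the inner conditional expectation I would expand $\bar\theta_i=2(2\rho_i-1)\lambda^{-1}(\mathcal{I}_i(c^i_1)+\mathcal{I}_i^2(c^i_2))$ and apply the duality \eqref{eq:IBP}, transferring the operators $\mathcal{I}_i,\mathcal{I}_i^2$ onto the Dirac mass $\delta_L(\bar X_i)$. Using \eqref{eq:link:integral:hermite:pol}, this rewrites the resulting quantity as a weighted Hermite polynomial in the Gaussian increment $Z_i$, evaluated against the density $g(a_{i-1}(\zeta_i-\zeta_{i-1}),L-\bar X_{i-1})$ at $\bar X_i=L$, with the coefficients $c^i_j$ and $\partial_{\bar X_{i-1}}c^i_2$ evaluated at $\bar X_i=L$, i.e.\ in terms of $b(L),a(L)$ and $a'(\bar X_{i-1})$.

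The crucial step is then the integration over the intermediate jump time $\zeta_i$, which is conditionally uniform on $[\zeta_{i-1},\zeta_{i+1}]$ with density $(\zeta_{i+1}-\zeta_{i-1})^{-1}$ on the event $\{N_T=n\}$. The extra weight $(\zeta_i-\zeta_{i-1})$ present in our integrand, combined with the product of the two Gaussian densities $g(a_{i-1}(\zeta_i-\zeta_{i-1}),L-\bar X_{i-1})\,g(a(L)(\zeta_{i+1}-\zeta_i),\bar X_{i+1}-L)$ arising from Steps~1--2, is precisely what produces the Gaussian tail factor $\bar\Phi(a(L)(\zeta_{i+1}-\zeta_{i-1}),Z_{i-1,i+1})$ together with the prefactor $(\bar X_{i-1}-L)/(a_{i-1}^{3/2}\sigma(L))$; this is in essence a first-passage time identity for the reflected Gaussian kernel, and it is what underlies the definition $\hat d^{i+1}_k=\bar d^{i+1}_k\cdot(\bar\Phi g^{-1})(a(L)(\zeta_{i+1}-\zeta_{j}),Z_{j,i+1})$. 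Once this Gaussian computation is carried out and rearranged, I would identify the remaining structure as an expectation against the merged boundary process $\bar X^\partial_{i-1,i+1}$ of \eqref{eq:dbp} with operator $\bar{\mathcal{I}}_{i-1,i+1}^2(\hat d^{i+1}_2)+\bar{\mathcal{I}}_{i-1,i+1}(\hat d^{i+1}_1)$; this is possible thanks to the reflection scaling $\mu(\bar X_{i-1})=-\sigma(L)/\sigma(\bar X_{i-1})$, which exactly reconciles the two different variance scales $a_{i-1}$ and $a(L)$ in the two intervals.

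The last-interval case $i=n$ is simpler and is handled analogously: $\ltheta^e_{n+1}=2e^{\lambda T}$ is deterministic, so the conditioning argument above reduces to an integration against the single Gaussian kernel on $[\zeta_{n-1},T]$ after pinning $\bar X_n=L$, yielding the boundedness $|\ltheta^{\partial\circledast e}_{j,n+1}|\le C$. The membership $\ltheta^{\partial\circledast e}_{j,i+1}\in\mathbb{S}_{j,i+1,n}(\bar X^\partial)$ follows by inspection of the formula. Finally, for the time degeneracy estimate \eqref{time:degeneracy:estimate:boundary:process}, I would combine the stochastic integral bound \eqref{eq:Hesta} applied to $\bar{\mathcal{I}}_{j,i+1}$ (giving a $(\zeta_{i+1}-\zeta_j)^{-k/2}$ factor for $k\in\{1,2\}$) with the fact that $\bar\Phi g^{-1}$ is uniformly bounded, and use that on $D_{j,n}$ the prefactor $(\bar X_j-L)\ge 0$ admits standard Gaussian-tail moment bounds independent of $(p,i,n)$; the extra factor $a_j^{-3/2}$ is harmless by ellipticity. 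The main obstacle in the whole argument is the Gaussian computation of the intermediate time integral: isolating the correct linear combination so that the prefactor $(\bar X_{i-1}-L)/\sigma(L)$ and the kernel $(\bar\Phi g^{-1})$ appear together, and verifying that the two surviving operators can be repackaged into $\bar{\mathcal{I}}_{i-1,i+1}$ and $\bar{\mathcal{I}}^2_{i-1,i+1}$ acting on the weights $\hat d^{i+1}_k$.
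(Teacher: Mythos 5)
Your plan is correct and follows essentially the same route as the paper's (very terse) proof: localize and reduce $\bar\theta_i$ on $\{\bar X_i=L\}$ using the parity of $\rho_i$ and the Hermite representation \eqref{eq:link:integral:hermite:pol} (this is exactly Corollary \ref{cor:2.1} in the paper), then redo the Gaussian time convolution of Lemma \ref{lem:4.3} with the extra factor $(\zeta_i-\zeta_{i-1})$, which is precisely identity \eqref{eq:4.6a} of Lemma \ref{lem:7.1} in place of \eqref{eq:4.6} and is what produces the prefactor $(\bar X_{i-1}-L)/(a_{i-1}^{3/2}\sigma(L))$ and the kernel $\bar\Phi g^{-1}$. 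So the main identity \eqref{eq:faima} is obtained exactly as the authors intend.

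One spot in your treatment of the time degeneracy estimate is imprecise and, as written, does not close. First, $\bar\Phi g^{-1}$ is not uniformly bounded; the correct statement (via a change of variables and Komatsu's inequality, as the paper notes) is $(\bar\Phi g^{-1})(t,z)\le C t^{1/2}$, which on a bounded horizon is a constant but whose $t^{1/2}$ gain is part of why the merged weight degenerates \emph{less} than $\ltheta^{\partial*e}$. Second, and more importantly, the prefactor $(\bar X_j-L)$ is $\mathcal{G}_j$-measurable and unbounded, so ``Gaussian-tail moment bounds on $D_{j,n}$'' cannot control it; the estimate \eqref{time:degeneracy:estimate:boundary:process} requires a deterministic constant, and the needed input is the pathwise inequality $(\bar X_j-L)\,\I_{D_{j,n}}\I_{D^\partial_{j,i+1,n}}\le C|Z_{j,i+1}|$, valid on the \emph{joint} event because $\mu<0$ in the dynamics \eqref{eq:dbp}. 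This supplies the half power of $(\zeta_{i+1}-\zeta_j)$ that, combined with the $(\zeta_{i+1}-\zeta_j)^{-1}$ coming from $\bar{\mathcal{I}}^2_{j,i+1}$ via \eqref{eq:Hesta}, yields the required $(\zeta_{i+1}-\zeta_j)^{-p/2}$; without it your accounting would stall at $(\zeta_{i+1}-\zeta_j)^{-p}$. These are exactly the two inequalities the paper records at the end of its proof.
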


	\begin{remark}
		\label{rem:5}
(i) We again emphasize the role of the two indexes $j=i-1, i$, for $i\in {\mathbb{N}}_n$, in the definition of the boundary merged weights $\ltheta^{\partial *e}_{j,i+1}$ and $\ltheta^{\partial  \circledast e}_{j,i+1}$ of Lemmas \ref{lem:4.3} and \ref{lem:4.3a}. The index $j=i-1$ is used for the boundary merging operation while the index $j=i$ will be used once the reduction of jumps operation (a.k.a. time merging) is performed in Section \ref{sec4.3}. \\
(ii) The reason why the terms $ \bar{\Phi}g^{-1} $ appear in the definitions of the coefficients $ \hat{d}^{i+1} $, contrary to the previous Lemma \ref{lem:4.3}, is due to the time increments $ \zeta_{i}-\zeta_{i-1} $ and $ \zeta_n-\zeta_{n-1} $ on the left hand side of equalities \eqref{eq:faima} and \eqref{eq:faim:last:interval}.
	\hfill\break 
	(iii)
	An important technical remark which follows from the definitions of the merged weights $\ltheta^{\partial *e}_{j,i+1}$ and $\ltheta^{\partial \circledast e}_{j,i+1}$, $ j=i-1,i $  in Lemmas \ref{lem:4.3} and \ref{lem:4.3a} is that they
	do not depend neither on $ \rho_i $ or $ \rho_{i+1} $. 
	\end{remark}

\section{A first IBP formula: Putting the ingredients to work}
\label{sec:!IBP}
As explained at the beginning of Section \ref{sec:ingr},
 in order to obtain an IBP formula, one first has to take the conditional expectation w.r.t the Poisson process $ N $ inside $ \E\left[\partial_{\bar{X}
 	_{N_T+1}}f(  
 \bar{X}
 _{N_T+1}) \prod_{{i}=1}^{N_T+1} \I_{D_{i,N_T}}\bar {\theta}_i \right] $. Once the jump times are fixed, one observes that there is a Markov chain structure to which one may apply the transfer of derivatives procedure described in Lemma \ref{lem:5.1}. This leads to a tree structure of terms which are combined up to the time interval where we decide to stop the transfer of derivatives and to perform a local IBP formula using only the noise on this specific interval. 
 Throughout this section we will often denote this interval by the general index $ k $. For example, the tree structure is illustrated in Figure \ref{fig:4} where the IBP is performed on the time interval $ [\zeta_{k-1},\zeta_k] $ for $ k=2 $. The application of the IBP formula \eqref{eq:IBP} on the time interval
  $ [\zeta_{k-1},\zeta_k] $ will give after using the extraction formula \eqref{eq:exta} the new weight
  \begin{align*}
  \mathcal{I}_k(\I_{D_{k,n}}\bar{\theta}_k) =
  	\delta_L(\bar{X}_k)\bar{\theta}_k+\I_{D_{k,n}}\mathcal{I}_k(\bar{\theta}_k).
  \end{align*}
  We thus see that a boundary term appears which is treated by the boundary merging procedure described in Lemma \ref{lem:4.3a} and thus gives rise to the new weight associated to the symbol $ \partial\circledast e $.

 The terms which contain boundary weights due to the successive application of the transfer of derivatives formula \eqref{eq:5.1a} are treated by the boundary merging procedure of Lemma \ref{lem:4.3}. Figure \ref{fig:4} shows these terms before the boundary merging procedure in a simplified algebraic notation that will be introduced in this section. 
 
 This boundary merging operation will lead to the recovery of an integrable time degeneracy estimate and to the reduction of one jump unit (after an application of the so-called time merging procedure) in the underlying Poisson process. Finally, a local IBP formula is performed on the interval $ [\zeta_{k-1},\zeta_k] $ where we have stopped the transfer of derivatives. As described above, after using the extraction formula, the new weight given by $\mathcal{I}_k(\I_{D_{k, n}} \bar{\theta}_k)$ will also generate a boundary term to which we apply the boundary merging procedure described by Lemma \ref{lem:4.3a}. As before, this will in turn reduce the number of jump times by one unit. 
 
 We importantly note that when one stops the (backward) transfer of derivatives procedure and decide to perform a local IBP formula on the time interval $ [\zeta_{k-1},\zeta_k] $, the weights $\I_{D_{j, n}} \bar{\theta}_{j}$ for the preceding time intervals, namely $[\zeta_{j-1}, \zeta_{j}]$, for $j=1, \cdots, k-1$, correspond to the original probabilistic representation in Theorem \ref{th:3.2a} and thus remain unchanged. A similar remark applies when a time merging of weights or a correction weight appears on one branch of the tree structure below.
 
 The overall tree diagram before carrying out the boundary merging procedure can be schematically described as follows in the case of $N_T=4$ jumps of the Poisson process
 \begin{figure}[H]
 	\begin{tikzpicture}
 	\node (0) at (0,-1)  {$ 0=\zeta_0 $};
 	\node (00) at (0,-3.1){$ 0=\zeta_0  $};
 	\node at (1,-0.7){$0$};
 	\node at (1,-3.3){$0$};
 	\node (11) at (1.8,-1){$\zeta_{1}$};
 	\node (011) at (1.8,-3.1){$\zeta_{1}$};
 	\node at (2.55,-0.7){$\mathcal{I}$};
 	\node (12) at (3.3,-1){$\zeta_{2}$};
 	\node at (4.05,-0.7){$e$};
 	\node (13) at (4.8,-1){$\zeta_{3}$};
 	\node (013) at (4.8,-3.1){$\zeta_{3}$};
 	\node (30) at (3.3,-3.1){$ \zeta_2 $};
 	\node (31) at (1.8,-3.1){};	\node(300) at (3.3,-3.1){};
 	\node (41) at (1.8,-3){};
 	\node (73) at (8.9,-2.1){};
 	\node (50) at (9,-2.5){};
 	\draw[->, blue] (011)--(12)
 	node[pos=0.5,above] {$ \partial $};
 	\draw[->] (31)--(30) 
 	node[pos=.4,sloped,below] {$ 0 $};
 	\draw[-latex,bend right]   (300) edge (13);
 	\node at (4.3,-1.9) {$ c $};
 	\node at (5.7,-1.9) {$ c $};
 	\draw[-latex,blue, bend left]   (300) edge (13);
 	\node at (3.4,-1.9) {$ \textcolor{blue}{ \partial   }$};
 	\node at (4.9,-1.9) {$ \textcolor{blue}{ \partial  } $};
 	\node (36) at (7.7,-2.2){};
 	\node (46) at (7.8,-3){};
 	
 	\node (16) at (6.3,-1){$\zeta_{4}$};
 	\node (016) at (6.3,-3.1){$\zeta_{4}$};
 	\node at (5.5,-0.7){$e$};
 	
 	\node at (7,-0.7){$ e$};
 	\node (17) at (7.8,-1){$T$};
 	\draw[->] (0) -- (11);
 	\draw[->] (00) -- (011);
 	\draw[-latex,blue,bend left]  
 	(013) edge (16);
 	\draw[-latex,bend right]  
 	(013) edge (16);

 	\draw[->] (30) -- (013)
 	node[pos=.6,below] {$ 0 $}	;

 	\draw[->] (013) -- (016)
 	node[pos=.6,below] {$ 0 $}	;
 	\draw[->] (016) -- (17)
 	node[pos=.4, above] {$ c$}	;
 	\draw[->] (11) -- (12);
 	\draw[->] (12) -- (13);
 	\draw[->] (13) -- (16);
 	\draw[->] (16) -- (17);
 	\end{tikzpicture}
 	\caption{A tree in the case of $N_T=4$ jump times with an IBP performed on the time interval $ [\zeta_{1},\zeta_2] $ before any merging.}
 		\label{fig:4}
 \end{figure}

  An important remark in order to understand the tree notation to be introduced in the next section is that the blue curved arrows will lead to the application of the boundary merging Lemma \ref{lem:4.3}. Therefore, as the number of jumps will be reduced by the time merging procedure, those time merged weights will no longer be considered as weights associated to the set $ \{N_T=4\} $ but $ \{N_T=3\} $. A similar remark applies when applying Lemma \ref{lem:4.3a} to the boundary term generated by $ \mathcal{I}_k(\I_{D_{k, n }} \bar{\theta}_k) $ which is denoted by the straight blue arrow in Figure \ref{fig:4}.
  
 We believe that the above explanations given before the proof of our main result are important for the reader because they are necessary to understand the following section where the various symbols for the above tree structure are introduced.

\subsection{The tree structure in the IBP formula}\label{sec4.3}


In this section, we start by describing all the branches of the tree that are generated after performing transfer of derivatives and the two types of boundary merging previously described. 

 We denote by $ S_{n+1} $, the set of all symbol sequences of length $ n+1 $ described by the following vectors of length $ n+1 $. 
 More explicitly, when $n=0$, we define $S_1 :=\{I_1^1,\mathcal{B}_1^1\}$ with $ I_1^1:=(\mathcal{I}) $ and $ \mathcal{B}_1^1:=(\partial\circledast e)
  $ and for $n\in \bar \N$, 
 \begin{align*}
 S_{n+1} & := \left(\bigcup_{1\leq k \leq n+1}\{I_k^{n+1},\mathcal{B}_k^{n+1} \}\right) \bigcup \left( \bigcup_{2\leq k \leq n+1}\{C_k^{n+1},B_k^{n+1}\} \right).
 \end{align*}
 Therefore in $ S_{n+1}$ there are in total $2(n+1) + 2n$ vectors. The above vectors give the description of a branch in the IBP formula and are defined as follows. On the one hand, we let $C^{n+1}_k:=(0,...,0, c, e,...,e) $ and $ I^{n+1}_k:= (0,...,0, \mathcal{I}, e,...,e)$ where the component $ c $ or $\mathcal{I}$ appears in the $ k$-th coordinate for $ k=2,...,n+1 $ and we allow $ k=1 $ only for the vectors $ I^{n+1}_k$. On the other hand, we denote by $ {B}^{n+1}_{k}:=(0,...,0,\partial *e,e,...,e) $, $ k=2,\ldots,n+1$ and $ \mathcal{B} ^{n+1}_{k}:=(0,...,0,\partial \circledast e, e,...,e)$, $ k=1,\ldots, n+1$, where the element $ \partial* e$ or $ \partial  \circledast e $ appears in the $k$-th coordinate for $ k=2,\ldots,n $ and $ k=1,...,n$ respectively\footnote{In order to keep index notation short, so as not to have to always consider two cases when $ k $ takes as lowest value $ 1 $ or $ 2 $, we include $ C^{n+1}_1 $ and  $ B^{n+1}_1$ as symbols but any statement in this case should be taken as an empty statement or that the symbol corresponds to the zero (empty) element.}. 

 As mentioned previously, the index $ k $ corresponds to the time interval on which we perform the local IBP formula. In other words, if the symbol $0$ appears at the $j$-th coordinate of a vector, for $1\leq j<k$, this means that the weight corresponding to that time interval, namely $ [\zeta_{j-1},\zeta_j] $, remains the same as given in the probabilistic representation of Theorem \ref{th:3.2a}, i.e. $\I_{D_{j,n}}\bar{\theta}_j$. The symbol $c$ appearing at the $k$-th coordinate of the vector $C^{n+1}_k$ means that the new weight (associated to this vector and to the interval $ [\zeta_{k-1},\zeta_k] $) is $\I_{D_{k,n}}\ltheta^{c}_k$. The symbol $\mathcal{I}$ appearing at the $k$-th coordinate of $I^{n+1}_k$ stands for the IBP weight, i.e. $\I_{D_{k,n}}{\mathcal{I}}_{k} (\bar\theta_k)$. The symbol $ \partial *e$ corresponds to the merging between exchange (denoted by $e$) and boundary (denoted by $\partial$) r.v.'s according to Lemma \ref{lem:4.3} while the symbol $\partial  \circledast e$ corresponds to the same merging but occurring on the same interval as the one where we perform the IBP formula and is computed according to Lemma \ref{lem:4.3a}.

We remark here that in the last interval there is no boundary term because we always assume that the test function $f$ vanishes at the boundary.

If we fix our attention on all the branches that are generated with the objective of carrying out the local IBP formula on the interval $ [\zeta_{k-1},\zeta_k] $, we find the two following subsets of $ S_{n+1} $ which correspond to the branches that finish with a correction weight $ \ltheta^c $ and the branches that finish with a time merging of two time intervals, before leaving unchanged the remaining weights on remaining intervals. For $k = 1,\dots, n+1$, we define 
{
$$
\bar S_{n+1}^k := \bigcup_{k< j\leq n+1} \left\{ C^{n+1}_j \right\}, \quad \dot{S}_{n+1}^k  := \left\{\mathcal{B}^{n+1}_k\right\}  \bigcup \left( \bigcup_{k< j\leq n+1} \left\{ B^{n+1}_j \right\} \right)
$$ 
}
\noindent and in the special case $n=0$, we let $\dot{S}_{1}^1 := \left\{ \mathcal{B}^{1}_1\right\}$. 

 For example, the following figure describes the situation in the case when originally there were $n= 4 $ jump times. Observe that, on the one hand, due to the merging procedure, all sequences of arrows starting at time $ 0 $ and finishing at time $ T $ which contain time merged weights are thus considered on the set $ \dot{S}_{4}^2$, i.e. for $n=3$. On the other hand, the sequences of arrows that contain a correction term $ c $ are part of the set $\dot{S}_{5}^2  $. 
 
 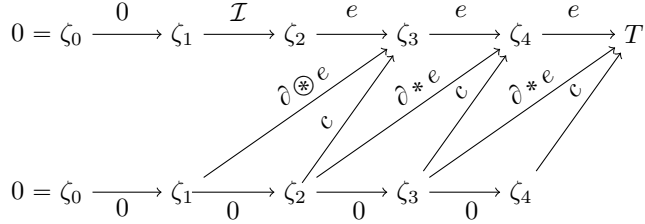
\begin{figure}[H]
 	\begin{tikzpicture}
 	\node (0) at (0,-1)  {$ 0=\zeta_0 $};
 	\node (00) at (0,-3.1){$ 0=\zeta_0  $};
 	\node at (1,-0.7){$0$};
 	\node at (1,-3.3){$0$};
 	\node (11) at (1.8,-1){$\zeta_{1}$};
 	\node (011) at (1.8,-3.1){$\zeta_{1}$};
 	\node at (2.55,-0.7){$\mathcal{I}$};
 	\node (12) at (3.3,-1){$\zeta_{2}$};
 	\node at (4.05,-0.7){$e$};
 	\node (13) at (4.8,-1){$\zeta_{3}$};
 	\node (013) at (4.8,-3.1){$\zeta_{3}$};
 	
 	\node (30) at (3.3,-3.1){$ \zeta_2 $};
 	
 	\node (31) at (1.8,-3.1){};
 	\node(300) at (3.3,-3.1){};
 	\node (41) at (1.8,-3){};

 	\node (73) at (8.9,-2.1){};
 	\node (50) at (9,-2.5){};
 	\draw[->] (31)--(30) 
 	node[pos=.5,sloped,below] {$ 0 $};
 	\draw[->]  (300) -- (13) 
 	node[pos=.4,sloped,above] {$c$};
 	\node (36) at (7.7,-2.2){};
 	\node (46) at (7.8,-3){};

 	\node (16) at (6.3,-1){$\zeta_{4}$};
 	\node (016) at (6.3,-3.1){$\zeta_{4}$};
 	\node at (5.5,-0.7){$e$};
 	
 	\node at (7,-0.7){$ e$};
 	\node (17) at (7.8,-1){$T$};
 	

 	\draw[->] (0) -- (11);
 	\draw[->] (00) -- (011);
 	\draw[->] (011) -- (13)
 	node[pos=.6,sloped,above] {$ \partial\circledast e $}	;
 	\draw[->] (30) -- (16)
 	node[pos=.6,sloped,above] {$ \partial * e $}	;
 	\draw[->] (30) -- (013)
 	node[pos=.6,below] {$ 0 $}	;
 	\draw[->] (013) -- (16)
 	node[pos=.6,sloped,above] {$ c $}	;
 	\draw[->] (013) -- (17)
 	node[pos=.6,sloped,above] {$ \partial *e $}	;
 	\draw[->] (013) -- (016)
 	node[pos=.6,below] {$ 0 $}	;
 	\draw[->] (016) -- (17)
 	node[pos=.6,sloped, above] {$ c$}	;
 	\draw[->] (11) -- (12);
 	\draw[->] (12) -- (13);
 	\draw[->] (13) -- (16);
 	\draw[->] (16) -- (17);
 	\end{tikzpicture}
 	\caption{A tree in the case of $N_T=4 $ jump times with an IBP on the interval $ [\zeta_{1},\zeta_2] $ after time merging. All branches that contain a merged terms are associated to the set $ \{N_T=3\} $.}
 	\label{fig:5}
 \end{figure}

\subsection{The Markov chain and weights associated to the IBP tree branches}\label{tree:structure:ibp}

For the corresponding time partition $ \pi:=\{0=\zeta_0<...<\zeta_{n+1}=T\} $ of the underlying Poisson process on the set $\left\{N_T= n\right\}$, we now need to define the underlying Markov chain $ \bar{X}^{\mathbf{s}} $, for $ \bold{s}\in S_{n+1}$ or $ \bold{s}\in \dot{S}^k_{n+1}$, that will be used in the probabilistic representation of our first IBP formula. 

It will be defined as the same process as the original Markov chain $ \bar{X} $ except that on a certain time interval it may use the one-step transition of the merged boundary Markov chain $\bar{X}^{\partial}$ given by \eqref{eq:dbp}. To be more specific, in the case that $ {\bold{s}}= B^{n+1}_{k}$ or $\mathcal{B}^{n+1}_{k}$, $ k=1,...,n+1 $, the Markov chain $ \bar{X}^{\mathbf{s}} $ is defined by
\begin{align*}
\bar{X}^{\mathbf{s}}_j:=
\begin{cases}
\bar{X}_j,\ \text{ if } 0\leq j\leq k-1,\\
\bar{X}^{\partial}_{k-1,k}(\bar{X}_{k-1})\ \text{ for }
j=k,\\ 
\bar{X} _j({k},\bar{X}^{\partial}_{k-1,k})\text{ for }j=k+1,...,n+1.
\end{cases} 
\end{align*}

Here $ \bar{X}^{\partial}_{k-1,k}(\bar{X}_{k-1}) $ stands for the one-step transition defined in \eqref{eq:dbp}. Similarly, for $j = k+1, \cdots, n+1$, $\bar{X} _j({k},\bar{X}^{\partial}_{k-1,k})  $ is the flow notation for the scheme \eqref{eq:MCa} taken at step $j$ starting from the point $ \bar{X}^{\partial}_{k-1,k} $ at step $k$. For any other $ \bold{s} \in S_{n+1} $ of length $ n+1 $, the process $\bar{X}^{\bold{s}}$ corresponds to the original Markov chain dynamics, that is, we let $ \bar{X}^{\bold{s}}\equiv\bar{X} $. We also define the associated set
$$
D_{i,n}^{\bold{s}}:=\{ \bar{X}^{\bold{s}}_{i}\geq L,N_T=n\}.  
$$



We now introduce the weights corresponding to the  Markov chain $ \bar{X}^{\bold{s}} $ to be used in the IBP formula.
For each $ {\bold{s}}\in S_{n+1} $, we will define its associated weights $ (\ltheta^{\bold{s}}_1,...,\ltheta^{\bold{s}}_{n+1}) $ and its product\footnote{Recall the standard convention $\prod_{\emptyset} =1$. } as 
\begin{align*}
\ltheta^{\bold{s}}:=\prod_{j=1}^{n+1}\ltheta^{\bold{s}}_j.
\end{align*}

We now proceed to define the weights for each element $ {\bold{s}}=(s_1,...,s_{n+1})\in S_{n+1} $ as follows for $ i\in \N_{n+1}$:
\begin{enumerate}
	\item If $ s_i=0 $, then $ \ltheta^{\bold{s}}_i := \I_{D_{i,n}^{\bold{s}}}\bar\theta_i(\bar{X}^{\bold{s}}_{i-1},\bar{X}^{\bold{s}}_{i},\rho_i,\zeta_{i-1},\zeta_i) = \I_{D_{i,n}} \bar\theta_i(\bar{X}_{i-1},\bar{X}_{i},\rho_i,\zeta_{i-1},\zeta_i)$.
	\item If $ s_i\in\{c,e\} $, with the notation introduced in \eqref{eq:defDc}
	\begin{align*}
	\ltheta^{\bold{s}}_{{i}}:= \I_{D_{i,n}^{\bold{s}}}
	\ltheta^{s_i}_{{{i}}}(\bar{X}^{\bold{s}}_{i-1},\bar{X}^{\bold{s}}_{i},\rho_i,\zeta_{i-1},\zeta_i).
	\end{align*} 
	As remarked at the beginning of Section \ref{sec:3a}, the weight $ \ltheta^{\bold{s}}_{{{i}}} $ is an explicit function of the underlying Markov chain and we will make use of such property in what follows.
	\item Similarly, if $ s_i\in\{\partial*e,\partial\circledast e,\mathcal I\} $, then using \eqref{eq:defDc}, Lemmas \ref{lem:4.3} and \ref{lem:4.3a}
	\begin{align*}
	\ltheta^{\bold{s}}_{{i}}:=
	\begin{cases}
	\ltheta^{\partial*e}_{i-1,{i}}(\bar{X}^{\bold{s}}_{i-1},\bar{X}^{\bold{s}}_{i},\rho_i,\zeta_{i-1},\zeta_i)\I_{D_{i,n}^{\bold{s}}},\ \text{ if } s_i=\partial*e,\\
	(\zeta_i-\zeta_{i-1})^{-1}\ltheta^{\partial  \circledast e}_{i-1,{i}}(\bar{X}^{\bold{s}}_{i-1},\bar{X}^{\bold{s}}_{i},\rho_i,\zeta_{i-1},\zeta_i)
	\I_{D_{i,n}^{\bold{s}}}
	,\ \text{ if } s_i=\partial\circledast e,\\
	\mathcal{I}_i(\bar{\theta}_i)\I_{D^{\bold{s}}_{i,n}}
	,\ \text{ if } s_i = \mathcal I.
		\end{cases}
	\end{align*} 
	In the case of time merging, we importantly refer the reader to Remark \ref{rem:5}. Due to this, we know that the reduction of one jump does not affect the sequence $ \rho_i$, $ i\in\mathbb{N}_{n+1} $ as the boundary merged weight does not depend on it. 
\end{enumerate}

\subsection{The IBP formula}

\begin{theorem}
	\label{prop:5.2}
	Let $ f\in \mathscr{C}^1_b(\mathbb{R})$ such that $ f(L)=0 $. Under assumption \textbf{(H}), the following IBP formula is satisfied:
	\begin{align*}
	{T} 	\E[f'(X_{T})\I_\seq{\tau\geq T}]  =&
	\E\left [\sum_{k=1}^{N_T+1} (\zeta_k - \zeta_{k-1})
	\left\{f(		
	\bar{X}_{N_T+1})
	\ltheta^{I^{N_T+1}_k}+\sum_{\mathbf{s}\in \bar{S}^k_{N_T+1}\cup \dot{S}^k_{N_T+1}}f(\bar{X}^{\mathbf{s}}_{N_T+1})
	\ltheta^{\mathbf{s}}\right\}
	\right].
	\end{align*}
	Moreover, the r.v. appearing inside the expectation of the right-hand side of the above equality belongs to $\mathbb{L}^p(\mathbb{P} ) $ for $ p\in [0,2) $.
\end{theorem}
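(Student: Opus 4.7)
The overall strategy is to produce, for each choice of a \emph{stopping index} $k\in\mathbb{N}_{N_T+1}$, a conditional IBP formula obtained by transferring the spatial derivative backward in time from the last interval to $[\zeta_{k-1},\zeta_k]$ and then performing a local IBP there, and finally to average the resulting $N_T+1$ formulae against the weights $(\zeta_k-\zeta_{k-1})$. Since $\sum_{k=1}^{N_T+1}(\zeta_k-\zeta_{k-1})=T$, this averaging produces exactly the factor $T$ on the left-hand side. The choice of weights is not cosmetic but is calibrated to offset the extra $(\zeta_k-\zeta_{k-1})^{-1/2}$ singularity produced at the stopping interval, keeping the overall $\mathbb{L}^p$-profile at the level of Theorem \ref{th:3.2a}.

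Fixing $n\in\bar{\mathbb{N}}$ and working on $\{N_T=n\}$ conditionally on the jump times, the first step is to apply \eqref{eq:sdf} on the last interval $[\zeta_n,T]$, which produces only an exchange and a correction descendant (the boundary term vanishes by the assumption $f(L)=0$), and then to iterate \eqref{eq:5.1a} on $[\zeta_{n-1},\zeta_n],\ldots,[\zeta_k,\zeta_{k+1}]$. Each iteration spawns three descendants: the exchange branch $\ltheta^e$ continues the transfer; the correction branch $\ltheta^c$ terminates and becomes the vector $C^{n+1}_j$ for $j>k$; and the boundary branch $\delta_L(\bar X_i)\ltheta^\partial_i$ is paired with the adjacent exchange weight $\ltheta^e_{i+1}$ and regularised via Lemma \ref{lem:4.3}, after which the time merging procedure of Lemma \ref{lem:Unif} reduces the effective number of jumps by one and produces the vector $B^{n+1}_j$ with Markov chain containing the boundary transition \eqref{eq:dbp}.

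Once the derivative has reached $[\zeta_{k-1},\zeta_k]$, the local IBP \eqref{eq:IBP} combined with the extraction formula \eqref{eq:exta} gives
\begin{align*}
\mathcal{I}_k\bigl(\I_{D_{k,n}}\bar\theta_k\bigr) = \I_{D_{k,n}}\mathcal{I}_k(\bar\theta_k) - \delta_L(\bar X_k)\bar\theta_k.
\end{align*}
The interior piece, paired with the already transferred weights on $[\zeta_k,T]$, produces the branch $I^{n+1}_k$. The boundary piece is first multiplied and divided by $(\zeta_k-\zeta_{k-1})$ so as to match the hypothesis of Lemma \ref{lem:4.3a}, which then converts it into the merged weight $(\zeta_k-\zeta_{k-1})^{-1}\ltheta^{\partial\circledast e}_{k-1,k}$ on the reduced grid, giving the branch $\mathcal{B}^{n+1}_k$. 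At this point all derivative operators have disappeared; summing the four families of branches, multiplying by $(\zeta_k-\zeta_{k-1})$, summing over $k\in\mathbb{N}_{N_T+1}$ and taking the full expectation yields the claimed identity.

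For the $\mathbb{L}^p$-statement, the key is that the time-degeneracy estimates of Definition \ref{def:td} are preserved at level $-1/2$ per interval through all the operations used: Lemma \ref{lem:5.1} for the transfer, Lemmas \ref{lem:4.3} and \ref{lem:4.3a} for the merging in the sense of \eqref{time:degeneracy:estimate:boundary:process}, and the local IBP which produces a $-1$ degeneracy at the stopping interval, precisely compensated by the explicit factor $(\zeta_k-\zeta_{k-1})$ in the statement. The $\mathbb{L}^p$-profile of each branch therefore matches that of $\prod_i\bar\theta_i$ in Theorem \ref{th:3.2a}, and the total number of branches is polynomial in $N_T$, which is absorbed by the Poisson factorial decay. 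I expect the main obstacle to be the combinatorial bookkeeping: verifying that the tree generated by the iterated transfer, after all merging and renumbering operations, is indexed exactly by $\{I^{n+1}_k\}\cup\bar S^k_{n+1}\cup\dot S^k_{n+1}$ with the correct weight $\ltheta^{\mathbf{s}}$, Markov chain $\bar X^{\mathbf{s}}$ and indicator $\I_{D^{\mathbf{s}}_{i,n}}$ on every branch, and checking that the $\rho$-independence of the merged weights noted in Remark \ref{rem:5}(iii) makes the reduced-jump representation well-defined against the Bernoulli noise.
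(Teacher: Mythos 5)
Your plan is correct and follows essentially the same route as the paper's proof: backward transfer of derivatives via Lemma \ref{lem:5.1} down to the stopping interval $[\zeta_{k-1},\zeta_k]$, a local IBP there with the extraction formula (your sign for $\mathcal{I}_k(\I_{D_{k,n}}\bar\theta_k)$ is the right one), boundary merging via Lemmas \ref{lem:4.3} and \ref{lem:4.3a} followed by the time merging of Lemma \ref{lem:Unif}, and the weighted average over $k$ with $\sum_k(\zeta_k-\zeta_{k-1})=T$, with integrability controlled by the time-degeneracy estimates exactly as in the paper. The only detail left implicit in your plan, which the paper spells out, is the explicit verification of the integrability hypothesis of Lemma \ref{lem:Unif} (the finiteness of $\E[(\zeta_{j+1}-\zeta_{j-1})^{-1}|G|\I_{\{N_T=n\}}]$ via the uniform conditional law of $\zeta_j$), but your degeneracy bookkeeping already contains the needed bounds.
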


\begin{proof} We start from the Markov chain representation given by Theorem \ref{th:3.2a}, namely 
	$$ 
	\E[f'(X_{T})\I_\seq{\tau\geq T}]  = \E[  f'(  \bar{X}_{N_T+1}) \prod_{i=1}^{N_T+1} \I_{D_{i,N_T}} \bar{\theta}_{i} ] = \sum_{n\geq0} \E\Big[\E[f'(  \bar{X}_{n+1})  \prod_{i=1}^{n+1} \I_{D_{i,n}}\bar\theta_{i} \vert T^{n+1} ] \,\I_{\{N_T = n\}} \Big]  .
	$$
	where we notice that $\seq{N_T = n} = \seq{T_{n+1} > T}\cap \seq{T_{n}\leq T}$.
	We remind the reader that $\mathbb{E}_{i,n}[X] $ is the expectation of $X$ conditional on $\{\mathcal{G}_i,T^{n+1},\rho^{n+1}, N_T=n\}$ and therefore for the rest of the proof we will work on the set $ \{N_T=n\} $.

	{\it Step 1: IBP on the last interval.}
		We start by proving the IBP for the last time interval. That is, by the tower property of conditional expectation and the integration by parts formula, \eqref{eq:IBP} on the (deterministic) time interval $[\zeta_{n}, T]$, noting that $ f(L)=0 $, one has
	\begin{align}
	\E[f'(  \bar{X}_{n+1}) \I_{D_{i, N_T}}\theta_{n+1} \prod_{i=1}^{n} \I_{D_{i,N_T}} \bar{\theta}_{i} \,|\, T^{n+1}] 
	& = \E[ \E_{n,n}[\mathcal{D}_{n+1}f(  \bar{X}_{n+1}) \I_{D_{n+1, n}} \theta_{n+1}]  \prod_{i=1}^{n} \I_{D_{i,n}} \bar{\theta}_{i}|\, T^{n+1}] \nonumber \\
	&= \E[  f(  \bar{X}_{n+1})  \I_{D_{n+1,n}} {\mathcal{I}}_{n+1}(\bar\theta_{n+1}) \prod_{i=1}^{n} \I_{D_{i,n}} \bar{\theta}_{i} |\, T^{n+1}]. \label{ibp:last:interval}
	\end{align}
		
{\it Step 2: The transfer of derivatives.}  In this step, we will perform the transfer of derivatives from the last time interval $[\zeta_{N_T}, T]$ to the time interval $ [\zeta_{k-1},\zeta_k] $.
In order to carry this step, using the Markov property of the process $\bar{X}$, we define for $k\in \N_n$ the functions:
\begin{align*}
F_{k}(\bar X_k)
:= \E_{k,n}\big[f(\bar{X}_{n+1})\prod_{i=k+1}^{n+1} \I_{D_{i,n}} \ltheta^{e}_{i}\big] =  \E\big[f(\bar{X}_{n+1})\prod_{i=k+1}^{n+1} \I_{D_{i,n}} \ltheta^{e}_{i}| \bar{X}_k, T^{n+1}, \rho^{n+1}, N_T=n\big], \quad 
\end{align*}
\noindent with the convention that $F_{n+1}(\bar{X}_{n+1}) = f(\bar{X}_{n+1})$ and $\prod_{\emptyset} \cdots = 1$. Note that the following recursive relation is satisfied for $ k\in\mathbb{N}_n $
\begin{align}
\label{eq:rec}
F_k(\bar X_k)= \mathbb{E}_{k,n}[F_{k+1}(\bar{X}_{k+1}) \I_{D_{k+1,n}}\ltheta^{e}_{k+1} ].
\end{align}

From the transfer of derivatives formula \eqref{eq:sdf} of Lemma \ref{lem:5.1}, we obtain
	\begin{align*}
	\E[  \partial_{\bar{X}_{n+1}}f(  \bar{X}_{n+1})  \prod_{i=1}^{n+1} \I_{D_{i,n}} \bar \theta_{i} |\, T^{n+1}] & =
	\E[  \partial_{\bar{X}_{n+1}}F_{n+1}(  \bar{X}_{n+1})  \prod_{i=1}^{n+1} \I_{D_{i,n}} \bar \theta_{i} |\, T^{n+1}]
	\\
	&= \E[\partial_{\bar{X}_{{n}}}F_n(\bar{X}_{{n}}) \prod_{i=1}^{n} \I_{D_{i,n}} \bar \theta_{i} |\, T^{n+1}] \\
	&\quad + \E[f(
	\bar{X}_n) \I_{D_{n+1,n}}{\ltheta}^{c}_{n+1} \prod_{i=1}^{n} \I_{D_{i,n}} \bar \theta_{i}\, |\, T^{n+1}].
	\end{align*}

	Next, we proceed using a backward induction argument by combining successive applications of the transfer of derivative formula of Lemma \ref{lem:5.1} with the tower property of conditional expectation. To be more specific, from Lemma \ref{lem:5.1}, one has for $ k\in\mathbb{N}_n $
	\begin{align}
	\E_{k,n}[ \partial_{\bar{X}_{{k+1}}}F_{k+1}(
	\bar{X}_{{k+1}}) \I_{D_{k+1, n}}\bar \theta_{k+1}] =& \partial_{\bar{X}_{{k}}}F_{k}(\bar{X}_{{k}}) + \E_{k,n}[F_{k+1}(\bar{X}_{{k+1}}) \I_{D_{k+1,n}} \ltheta^{c}_{{k+1}}] \\
	&+  \E_{k,n}[F_{k+1}(
	\bar{X}_{{k+1}}) \delta_{L}(\bar{X}_{{k+1}}) \ltheta^{\partial}_{{k+1}}] \nonumber
	\end{align}
	\noindent which directly implies by iteration 
	\begin{align}
	\E[  f'(\bar{X}_{n+1}) \prod_{i=1}^{n+1} \I_{D_{i,n}} \bar \theta_{i} |\, T^{n+1}] & = \E[\partial_{\bar{X}_{{k}}}F_{k}(\bar{X}_{{k}}) \prod_{i=1}^{k} \I_{D_{i,n}}\bar \theta_{i} |\, T^{n+1}]  + \sum_{j=k+1}^{n}\E[F_{j}(\bar{X}_{{j}})\I_{D_{j,n}} \ltheta_{j}^c \prod_{i=1}^{{j}-1} \theta_i |\, T^{n+1}]  \nonumber \\
	&  \quad +  \sum_{j=k+1}^{n}\E[F_{j}(\bar{X}_{{j}}) \delta_L(\bar{X}_{{j}})  \ltheta_{j}^{\partial} \prod_{i = 1}^{{j}-1} \I_{D_{i,n}} \bar \theta_i |\, T^{n+1}]\label{temp:transfer:lem:ibp}\\
	&  \quad + \E[f(
	\bar{X}_T)\I_{D_{n+1,n}} {\ltheta}^{c}_{n+1} \prod_{i=1}^{n} \I_{D_{i,n}}\bar \theta_{i} |\, T^{n+1}] \nonumber
	\end{align}
	\noindent for $k=1, \dots, n$, with the convention $\sum_{\emptyset} \cdots = 0$. 
	
	{\it Step 3: The local IBP on the interval $ [\zeta_{k-1},\zeta_k] $.}	
To proceed, we first notice that $\bar\theta_{k}\in {\mathbb{S}}_{k,n}(\bar{X})$, for $k\in \N_{n+1}$, is a smooth r.v.. Then from the tower property of conditional expectation (using $\mathbb{E}_{{ k-1  },n}[\cdot]$), the integration by parts formula \eqref{eq:IBP}, the extraction formula \eqref{eq:exta} and \eqref{eq:coeff} one obtains
	\begin{align*}
	\E[ \partial_{\bar{X}_{{k}}}F_{k}(
	\bar{X}_{{k}}) \theta_{k} \,\vert\, \mathcal{G}_{{k-1}},T^{n+1}] & = \E[F_{k}(
	\bar{X}_{{k}}) 
	\I_{D_{k,n}} {\mathcal{I}}_{ {k}}(\bar\theta_{k}) \,\vert\, \mathcal{G}_{{k-1}},T^{n+1}] - \E[F_{k}(
	\bar{X}_{{k}}) \delta_L(
	\bar{X}_{{k}}) \bar\theta_{k} \,\vert\, \mathcal{G}_{{k-1}},T^{n+1}], \\
	& =  \E[F_{k}(
	\bar{X}_{{k}}) 
	\I_{D_{k,n}}
	 {\mathcal{I}}_{ {k}}(\bar\theta_{k}) \,\vert\, \mathcal{G}_{{k-1}},T^{n+1}] + \E[F_{k}(
	\bar{X}_{{k}}) \delta_L(
	\bar{X}_{{k}}) \ltheta^{\partial}_{{k}} \,\vert\, \mathcal{G}_{{k-1}},T^{n+1}].
	\end{align*}
	
	Now, plugging the previous identity into \eqref{temp:transfer:lem:ibp} and using the recursive formula \eqref{eq:rec} yield
	for $k\in\mathbb{N}_n$:
	\begin{align}
	& \nonumber\E[  f'(  
	\bar{X}_{n+1})  \prod_{i=1}^{n+1} \I_{D_{i,n}} \bar \theta_{i} |\, T^{n+1}] \\
	& = \E[ F_{k}(\bar{X}_{{k}}) \I_{D_{k,n}}
 {\mathcal{I}}_{{k}}(\bar\theta_{k}) \prod_{i=1}^{k-1} \I_{D_{i,n}} \bar {\theta}_i  |\, T^{n+1}] + \sum_{j=k+1}^{n}\E[F_{j}(
	\bar{X}_{{j}}) \I_{D_{j,n}}\ltheta_{j}^c \prod_{i=1}^{{j}-1} \I_{D_{i,n}} \bar \theta_i |\, T^{n+1}] \nonumber \\
	& \quad + \sum_{j=k}^{n}\E[F_{j+1}(\bar{X}_{{j+1}})\I_{D_{j+1,n}}\ltheta^e_{j+1} { \delta_L(\bar{X}_{{j}})  }\ltheta_{j}^{\partial} \prod_{i=1}^{{j}-1} \I_{D_{i,n}} \bar \theta_i |\, T^{n+1}]\label{ibp:k:interval} \\
	& \quad + \E[f(\bar{X}_{n+1})\I_{D_{n+1,n}} {\ltheta}^{c}_{n+1} \prod_{i=1}^{n} \I_{D_{i,n}} \bar {\theta}_i |\, T^{n+1}]. \nonumber  
	\end{align}

	{\it Step 4: Combining all the local IBP formulae.}
	Multiplying \eqref{ibp:last:interval} by $(T-\zeta_n)$, \eqref{ibp:k:interval} by $(\zeta_k - \zeta_{k-1})$ and summing from $k=1$ to $n$ the resulting equalities, we obtain
	\begin{align*}
	T \E[  f'(\bar{X}_{n+1})  \prod_{i=1}^{n+1} \I_{D_{i,n}} \bar \theta_{i} |\, T^{n+1}]  & = \sum_{k=1}^{n+1} (\zeta_{k} - \zeta_{k-1}) \E[  \partial_{\bar{X}_{n+1}}f(\bar{X}_{n+1}) \prod_{i=1}^{n+1} \I_{D_{i,n} }\bar \theta_{i} |\, T^{n+1}] ,\\
	 = & \E[  f(\bar{X}_{n+1})  \I_{D_{n+1,n}} (T-\zeta_n) {\mathcal{I}}_{n+1}(\bar\theta_{n+1}) \prod_{i=1}^{n} \I_{D_{i,n}} \bar {\theta}_i |\, T^{n+1} ] \\
	&  + \sum_{k=1}^n \E[ f(\bar{X}_{n+1}) \prod_{i =k+1}^{n+1}  \I_{D_{i,n}}\ltheta^{e}_{i} \times \I_{D_{k,n}} (\zeta_k-\zeta_{k-1}) {\mathcal{I}}_{{k}}(\bar\theta_{k}) \times  \prod_{i=1}^{k-1} \I_{D_{i,n}}\bar {\theta}_i|\, T^{n+1}  ] \\
	&  + \sum_{k=1}^n (\zeta_{k} - \zeta_{k-1}) \sum_{j=k+1}^{n}\E[f(\bar{X}_{n+1})\prod_{i=j+1}^{n+1} \I_{D_{i,n}}\ltheta^{e}_{i} \times  \I_{D_{j,n}}\ltheta_{j}^c \times \prod_{i=1}^{{j}-1} \I_{D_{i,n}} \bar \theta_i|\, T^{n+1} ] \\
	& + \sum_{k=1}^n (\zeta_k-\zeta_{k-1}) \sum_{j=k}^{n} \E[f(\bar{X}_{n+1}) \prod_{i=j+1}^{n+1} { \I_{D_{i,n}}  }\ltheta^e_{i} \times \delta_L(\bar{X}_{{j}}) \,  \ltheta_{j}^{\partial} \times\prod_{i=1}^{{j}-1} \I_{D_{i,n}}\bar \theta_i|\, T^{n+1} ] \\
	& +  \sum_{k=1}^{n} (\zeta_k -\zeta_{k-1}) \E[f(\bar{X}_{n+1})  \I_{D_{n+1,n}}{\ltheta}^{c}_{n+1} \prod_{i=1}^{n} \I_{D_{i,n}} \bar {\theta}_i |\, T^{n+1}]
	\end{align*}

\noindent where we used the fact that $T-\zeta_n + \sum_{k=1}^n (\zeta_k - \zeta_{k-1}) = T-\zeta_0=T$ in the first equality. The final argument starts by multiplying the above identity by $\I_\seq{N_T = n}$ and taking the expectation of both hand sides with respect to the Poisson process. 

{\it Step 5: The { merging  } procedure.} Now, we perform the boundary and time merging procedures for the fourth term appearing in the right-hand side of the above equality. This is done by first conditioning with respect to $\mathcal{G}_{j-1}, \zeta_{j+1}, N_T = n$ in the inside sum and then by applying Lemma \ref{lem:4.3}, for $ j>k $ and Lemma \ref{lem:4.3a}. More specifically, for any $ n\geq 1 $, $k \in \N_{n}$ and $ k < j \leq n$, 
\begin{align*}
&   \E\Big[ f(\bar{X}_{n+1})\prod_{i=j+1}^{n+1}   \I_{D_{i,n}}\ltheta^{e}_{i} \times \delta_L(
	\bar{X}_{{j}}) \ltheta_{j}^{\partial} \, | \mathcal{G}_{j-1}, \zeta_{j+1}, N_T=n \Big] \\
	& =  \E\Big[ F_{j+1}(\bar{X}_{j+1})   \I_{D_{j+1,n}}\ltheta^{e}_{j+1} \times \delta_L(
	\bar{X}_{{j}}) \ltheta_{j}^{\partial} \, | \mathcal{G}_{j-1}, \zeta_{j+1}, N_T=n \Big] \\
	& =  \frac{\lambda^{-1}}{\zeta_{j+1}-\zeta_{j-1}} \E\Big[ F_{j+1}(\bar{X}^{\partial}_{j-1,j+1})   \I_{D^{\partial}_{j-1, j+1,n}}\ltheta^{\partial * e}_{j-1, j+1} \, | \mathcal{G}_{j-1}, \zeta_{j+1}, N_T=n \Big]
\end{align*}

\noindent and for $j=k$,
\begin{align*}
 &   \E\Big[ f(\bar{X}_{n+1})\prod_{i  = k+1}^{n+1}   \I_{D_{i,n}}\ltheta^{e}_{i} \times \delta_L(\bar{X}_{{k}}) (\zeta_{k}-\zeta_{k-1})\ltheta_{k}^{\partial} \, | \mathcal{G}_{k-1}, \zeta_{k+1}, N_T=n \Big] \\
	& =  \E\Big[ F_{k+1}(\bar{X}_{k+1})   \I_{D_{k+1,n}}\ltheta^{e}_{k+1} \times \delta_L(
	\bar{X}_{{k}}) \ltheta_{k}^{\partial} \, | \mathcal{G}_{k-1}, \zeta_{k+1}, N_T=n \Big] \\
	& =  \frac{\lambda^{-1}}{\zeta_{k+1}-\zeta_{k-1}} \E\Big[ F_{k+1}(\bar{X}^{\partial}_{k-1, k+1})   \I_{D^{\partial}_{k-1, k+1,n}}\ltheta^{\partial \circledast e}_{k-1, k+1} \, | \mathcal{G}_{k-1}, \zeta_{k+1}, N_T=n \Big]
\end{align*}

\noindent so that
\begin{align*}
& \E\Big[\sum_{k=1}^n (\zeta_k-\zeta_{k-1}) \sum_{j=k}^{n} \E[f(\bar{X}_{n+1}) \prod_{i=j+1}^{n+1} { \I_{D_{i,n}}  }\ltheta^e_{i} \times \delta_L(\bar{X}_{{j}}) \,  \ltheta_{j}^{\partial} \times\prod_{i=1}^{{j}-1} \I_{D_{i,n}}\bar \theta_i|\, T^{n+1} ] \I_\seq{N_T=n} \Big] \\
& = \E\Big[\sum_{k=1}^n (\zeta_k-\zeta_{k-1}) \times \sum_{j=k+1}^{n} F_{j+1}(\bar{X}^{\partial}_{j-1,j+1}) \lambda^{-1}(\zeta_{j+1}-\zeta_{j-1})^{-1} \I_{D^{\partial}_{j-1, j+1, n}}  \ltheta^{\partial * e}_{j-1, j+1} \times\prod_{i=1}^{{j}-1} \I_{D_{i,n}} \bar \theta_i   \I_\seq{N_T=n} \Big]\\
&   + \E\Big[\sum_{k=1}^n (\zeta_k-\zeta_{k-1})  F_{k+1}(\bar{X}^{\partial}_{k-1,k+1}) \lambda^{-1} (\zeta_{k+1}-\zeta_{k-1})^{-1}\I_{D^{\partial}_{k-1, k+1, n}} \ltheta^{\partial \circledast e}_{k-1, k+1} \times\prod_{i=1}^{k-1} \I_{D_{i,n}} \bar \theta_i\I_\seq{N_T=n} \Big]
\end{align*}

 We then apply Lemma \ref{lem:Unif} to the above identity. In order to do it, we make use of the following decomposition
\begin{align*}
& \E\Big[\sum_{k=1}^n (\zeta_k-\zeta_{k-1}) \sum_{j=k}^{n} \E[f(\bar{X}_{n+1}) \prod_{i=j+1}^{n+1} { \I_{D_{i,n}}  }\ltheta^e_{i} \times \delta_L(\bar{X}_{{j}}) \,  \ltheta_{j}^{\partial} \times\prod_{i=1}^{{j}-1} \I_{D_{i,n}}\bar \theta_i|\, T^{n+1} ] \I_\seq{N_T=n} \Big] \\
& = \E\Big[\sum_{k=1}^{n}\sum_{j=k+1}^{n} (\zeta_{j+1}-\zeta_{j-1})^{-1} G_1(\zeta_{1}, \cdots, \zeta_{j-1}, \zeta_{j+1}, \cdots, \zeta_{N_T+1}) \I_\seq{N_T=n} \Big] \\
& \quad +  \E\Big[\sum_{k=1}^{n} (\zeta_{k+1}-\zeta_{k-1})^{-1} G_2(\zeta_{1}, \cdots, \zeta_{k-1}, \zeta_{k+1}, \cdots, \zeta_{N_T+1}) \I_\seq{N_T=n} \Big]
\end{align*}

\noindent where $G_1$ and $G_2$ are the measurable functions defined for $j\geq k+1$ by
\begin{align*}
& G_1(\zeta_1, \cdots, \zeta_{j-1}, \zeta_{j+1}, \cdots, \zeta_{N_T+1})  :=  \lambda^{-1} (\zeta_{k}-\zeta_{k-1}) \E\Big[F_{j+1}(\bar{X}^{\partial}_{j-1, j+1}) \I_{D^{\partial}_{j-1, j+1, n}}  \ltheta^{\partial * e}_{j-1, j+1} \times\prod_{i=1}^{{j}-1} \I_{D_{i,n}} \bar \theta_i |\, T^{n+1} \Big], \\
& G_2(\zeta_1, \cdots, \zeta_{k-1}, \zeta_{k+1}, \cdots, \zeta_{N_T+1})  :=  \lambda^{-1}  (\zeta_{k}-\zeta_{k-1})\E\Big[F_{k+1}(\bar{X}^{\partial}_{k-1, k+1}) \I_{D^{\partial}_{k-1, k+1, n}}  \ltheta^{\partial \circledast e}_{k-1, k+1} \times\prod_{i=1}^{k-1} \I_{D_{i,n}} \bar \theta_i |\, T^{n+1} \Big].
\end{align*}

Now observe that the weights $ \ltheta^{\partial *e}_{j-1,j+1}, \ltheta^{\partial  \circledast e}_{k-1, k+1}$ satisfy the time degeneracy estimates in the sense of \eqref{time:degeneracy:estimate:boundary:process} and so do the weights $\ltheta^{e}_i$, $\bar \theta_{i}$, so that from the tower property of conditional expectation and Lemma \ref{lem:7} with $p=1$, the following estimates hold
\begin{align*}
 |G_1|& \leq  C^{n} \lambda^{-1} (\zeta_{k}-\zeta_{k-1}) \prod_{i=1, i\neq j, j+1}^{n+1} (\zeta_{i}-\zeta_{i-1})^{-\frac12} (\zeta_{j+1}-\zeta_{j-1})^{-\frac12}, \\
   |G_2|& \leq C^{n} \lambda^{-1} (\zeta_{k}-\zeta_{k-1}) \prod_{i=1, i \neq k, k+1}^{n+1} (\zeta_{i}-\zeta_{i-1})^{-\frac12} (\zeta_{k+1}-\zeta_{k-1})^{-\frac12}
 \end{align*}

\noindent with the convention $\zeta_0=0$, $\zeta_{n+1}=T$ on $\left\{N_T=n\right\}$. 

Using that conditional on the event $\left\{ N_T=n , \zeta_1,\cdots,\zeta_{j-1},\zeta_{j+1},\cdots,\zeta_{N_T} \right\}$, the distribution of $ \zeta_j $ is uniform on $ [\zeta_{j-1},\zeta_{j+1}] $, we get
\begin{align*}
& \E[(\zeta_{j+1}-\zeta_{j-1})^{-1} |G_1| \I_\seq{N_T=n}] \\
& \leq C^{n} \int_0^T \cdots \int_0^{s_2}  (s_{k}-s_{k-1}) (s_{j+1}-s_{j-1})^{-\frac32}  \prod_{i=1, \neq j, j+1}^{n+1} (s_{i}-s_{i-1})^{-\frac12}\, ds_1 \cdots ds_{n}\\
& \leq C^{n} \int_0^T \cdots \int_0^{s_2} (s_{k}-s_{k-1})  (s_{j+1}-s_{j-1})^{-\frac12}  \prod_{i=1, i\neq j, j+1}^{n+1} (s_{i}-s_{i-1})^{-\frac12}\, ds_1 \cdots ds_{j-1}ds_{j+1}\cdots ds_{n}\\
& < \infty.
\end{align*}

The same argument yields $\E[(\zeta_{k+1}-\zeta_{k-1})^{-1} |G_2| \I_\seq{N_T=n}] < \infty$. From Lemma \ref{lem:Unif}, we thus obtain
	\begin{align*}
	\E\Big[(\zeta_k-\zeta_{k-1}) & \sum_{j=k}^{n}f(\bar{X}_{n+1})\prod_{i=j+1}^{n+1}   \I_{D_{i,n}}\ltheta^{e}_{i} \times \delta_L(
	\bar{X}_{{j}}) \ltheta_{j}^{\partial} \times \prod_{i=1}^{{j}-1} \theta_i \, \I_\seq{N_T=n} \Big] \\
	= &\E\Big[(\zeta_k-\zeta_{k-1})  \sum_{j=k+1}^{n}f(\bar{X}^{B^{n}_{j}}_{n})
	\prod_{i=j+1}^{n}  \I_{D^{B^n_j}_{i,n}} \ltheta^{e,B_{j}^{n}}_i \times  \I_{D^{B^n_j}_{j,n}}\ltheta^{\partial * e,B_{j}^{n}}_{j}  \times \prod_{i=1}^{{j}-1} \theta_i \,  \I_\seq{N_T=n-1}\Big]\\
	&+\E\Big[ f(\bar{X}^{\mathcal{B}^{n}_{k}}_{n})
	\prod_{i=k+1}^{n} \I_{D^{\mathcal{B}^{n}_{k}}_{i,n}} \ltheta^{e,\mathcal{B}_{k}^{n}}_i \times  
	\I_{D^{\mathcal{B}^{n}_{k}}_{k,n}}\ltheta^{\partial \circledast e, \mathcal{B}_{k}^{n}}_{k}  \times \prod_{i=1}^{{k}-1} \theta_i  \, \I_\seq{N_T=n-1}\Big].
	\end{align*}
	
	{\it Step 6: Integrability properties.}
The proof of the $\mathbb{L}^{p}(\P)$-integrability, $p\in [0,2)$, follows from the time degeneracy estimates for each weight in the Lemmas \ref{lem:5.1}, \ref{lem:4.3}, \ref{lem:4.3a} and \ref{lem:7} combined with a similar argument to the one employed at the end of the proof in Section \ref{app:sec} (see the discussion following Lemma \ref{lem:7}). This also implies that the infinite sum over $n$ converges absolutely and therefore after a re-ordering of the different terms one obtains the claimed formula. This concludes the proof. 
\end{proof}
\begin{remark}\label{other:formulation}
(i) The right hand side of the IBP formula may alternatively be written in a longer but maybe more appealing format as 
\begin{align*}
	{T} \E[f'(X_{T})\I_\seq{\tau\geq T}]  & = \E\left[f(\bar{X}_{N_T+1})\sum_{k=1}^{N_T+1} (\zeta_k - \zeta_{k-1}) \left\{ \ltheta^{I^{N_T+1}_k} + \sum_{j=k+1}^{N_T+1} \ltheta^{C^{N_T+1}_j}  \right\}\right]\\
		& \quad + \E\left[\sum_{k=1}^{N_T+1} (\zeta_k - \zeta_{k-1}) \sum_{j=k+1}^{N_T+1}f(\bar{X}_{N_T+1}^{B^{N_T+1}_{j}})  \ltheta^{B^{N_T+1}_{j}}\right]  + \E\left[\sum_{k=1}^{N_T+1}f(\bar{X}^{\mathcal{B}^{N_T+1}_{k}}_{N_T+1}) \ltheta^{\mathcal{B}^{N_T+1}_{k}}\right].
\end{align*}

\noindent (ii) The restriction $f(L)=0$, can be easily removed if one considers the test function $ \tilde{f}(x)=f(x)-f(L) $ instead of $f$.
\end{remark}

Note that, the previous theorem not only yields an IBP formula which is suitable for Monte Carlo simulation, it also provides a probabilistic representation for the derivative, with respect to the terminal point, of the transition density of the killed process at time $T$. To be more specific, under assumption \textbf{(H)}, for any bounded measurable $f$ defined on $[L,\infty)$, one deduces $\E[f(X_T) \I_\seq{\tau > T}] = \int^{\infty}_{L} f(z) p(T, x, z) \, dz$ where $(0,\infty)\times [L,\infty)^2 \ni (T, x ,z) \mapsto p(T, x, z)$ is the transition density of the killed process at time $T$ starting from $x$ at time $0$. Moreover, from Theorem \ref{prop:5.2}, by a standard approximation argument that we omit, one deduces that $z\mapsto p(T, x, z)$ is differentiable on $[L,\infty)$.\footnote{We refer the interested reader to \cite{FKL1} for an alternative proof based on analytic arguments.} The next result provides a probabilistic representation of this derivative from which directly stems an unbiased Monte Carlo simulation method.
\begin{corol}
	\label{cor:ibp:backward}
	Under assumption \textbf{(H)}, the transition density of the killed process at time $T$ is differentiable with respect to its terminal point. Moreover, for all $ (T, x ,z) \in (0,\infty)\times [L,\infty)^2$ the following probabilistic representation holds
	\begin{align*}
	T  \partial_z p(T, x,  z)&=
	\E\left [\sum_{k=1}^{N_T+1} (\zeta_k - \zeta_{k-1}) g(a(\bar{X}_{N_T})(T-\zeta_{N_T}),z-\bar{X}_{N_T}) \ltheta^{I^{N_T+1}_k}\right] \\
	&\qquad  + \E\left[\sum_{k=1}^{N_T+1} (\zeta_k - \zeta_{k-1})\sum_{\mathbf{s}\in \bar{S}^k_{N_T+1}\cup \dot{S}^k_{N_T+1}} p_{\mathbf{s}}(T-\zeta_{N_T},\bar{X}^{\mathbf{s}}_{N_T}, z)\ltheta^{\mathbf{s}}\right].
	\end{align*}
	Here $p_{\mathbf{s}}$ denotes the transition density of the Markov chain $\bar{X}^{\bold{s}}$, that is, $ z\mapsto p_{\mathbf{s}}(T-\zeta_{N_T}, x, z)$ is the density of the r.v. $\bar{X}^{\bold{s}}_{N_T+1}$ conditional on $\left\{\bar{X}^{\bold{s}}_{N_T} =x\right\}$. In particular, for $ {\bold{s}}= B^{N_T+1}_{N_T+1}$ or $\mathcal{B}^{N_T+1}_{N_T+1}$, the dynamics \eqref{eq:dbp} readily gives $p_{\mathbf{s}}(T-\zeta_{N_T}, \bar{X}^{\mathbf{s}}_{N_T}, z) = g(a(L) (\zeta_{N_T+1}-\zeta_{N_T}), z - (L (1-\mu(\bar{X}_{N_T})) + \bar{X}_{N_T} \mu(\bar{X}_{N_T}))$ since $\bar{X}^{\mathbf{s}}_{N_T} = \bar{X}_{N_T}$.
\end{corol}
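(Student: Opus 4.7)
The plan is a distributional identification argument starting from the integration by parts formula of Theorem \ref{prop:5.2}. The continuous differentiability of $z \mapsto p(T, x, z)$ on $[L, \infty)$ can be justified via classical parabolic regularity for the killed process (or through the analytic representation of \cite{FKL1}); I would cite this and concentrate on the representation formula.

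Take an arbitrary $f \in \mathscr{C}^1_c((L, \infty))$, extended by zero on $(-\infty, L]$, and apply Theorem \ref{prop:5.2}. The left hand side reads $T \int_L^\infty f'(y)\, p(T, x, y)\, dy$; integrating by parts in $y$ (boundary contributions vanish thanks to $f(L) = 0$ and the Gaussian decay of $p(T, x, \cdot)$ at infinity) it becomes $-T \int_L^\infty f(y)\, \partial_y p(T, x, y)\, dy$. On the right hand side I would condition each summand on the $\sigma$-algebra $\mathcal{H} := \sigma(\mathcal{G}_{N_T}, \zeta^{N_T+1}, \rho^{N_T+1}, N_T)$: every weight carried by the intervals $[\zeta_{i-1}, \zeta_i]$ for $i \leq N_T$ is $\mathcal{H}$-measurable, while $\bar{X}^{\mathbf{s}}_{N_T+1}$ has a smooth conditional density supported on $[L, \infty)$. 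This rewrites each expectation $(\zeta_k - \zeta_{k-1})\E[f(\bar{X}^{\mathbf{s}}_{N_T+1})\ltheta^{\mathbf{s}}]$ as a Lebesgue integral $\int_L^\infty f(y) K_{\mathbf{s}}(y)\, dy$ for an explicit kernel $K_{\mathbf{s}}$.

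For $\mathbf{s} \in \bar{S}^k_{N_T+1} \cup \dot{S}^k_{N_T+1}$, the last-interval weight $\ltheta^{\mathbf{s}}_{N_T+1}$ after conditioning on $\mathcal{H}$ reduces to a constant multiple of $\I_{D^{\mathbf{s}}_{N_T+1, N_T}}$, so the conditional density of $\bar{X}^{\mathbf{s}}_{N_T+1}$ appears directly as the factor $p_{\mathbf{s}}(T-\zeta_{N_T}, \bar{X}^{\mathbf{s}}_{N_T}, y)$. For the summands $f(\bar{X}_{N_T+1}) \ltheta^{I_k^{N_T+1}}$ with $k \leq N_T$, the last-interval weight is $2e^{\lambda T}\I_{D_{N_T+1,N_T}}$, and the $(2\rho_{N_T+1} - 1)$-factor inherent in the reflection-based construction (Lemma \ref{lem:1}) collapses the mixture of reflected Gaussians into the single kernel $g(a(\bar{X}_{N_T})(T-\zeta_{N_T}), y - \bar{X}_{N_T})$. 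The remaining case $k = N_T+1$, in which the last-interval weight is the Malliavin integrand $\mathcal{I}_{N_T+1}(\bar{\theta}_{N_T+1})$ depending on $\bar{X}_{N_T+1}$, is handled by applying the duality \eqref{eq:IBP} in the opposite direction on the last interval, again reducing the computation to a Gaussian integral against the same $g$. Use of Fubini throughout is justified by the $\mathbb{L}^p$-integrability ($p \in [1,2)$) of the weight products from Theorem \ref{prop:5.2} together with the local boundedness of the Gaussian kernels uniformly in $y$ on compact subsets of $(L, \infty)$.

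Collecting kernels and equating with the transformed left hand side yields $\int_L^\infty f(y)[-T \partial_y p(T, x, y) - \Phi(y)]\, dy = 0$ for every $f \in \mathscr{C}^1_c((L, \infty))$, where $\Phi$ equals minus the expression inside the expectation on the right hand side of the claimed formula. Since both $\partial_y p(T, x, \cdot)$ and $\Phi$ are continuous on $(L, \infty)$, the integrand vanishes pointwise and the stated representation follows. The main technical obstacle I anticipate is the book-keeping in the case $k = N_T+1$: one must carefully track the $(2\rho_{N_T+1} - 1)$-factor together with the reverse duality step so that the reflected Gaussian cancels and only the single kernel $g(a(\bar{X}_{N_T})(T-\zeta_{N_T}), z - \bar{X}_{N_T})$ remains, which accounts for the asymmetric appearance of $g$ versus $p_{\mathbf{s}}$ in the statement.
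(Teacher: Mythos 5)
The paper offers no proof of this corollary: it is presented as following from Theorem \ref{prop:5.2} ``by a standard approximation argument that we omit'', and your weak-formulation strategy --- test against $f\in\mathscr{C}^1_c((L,\infty))$, integrate by parts in $y$ on the left, condition on the last transition of the chain on the right, identify continuous kernels --- is precisely that argument, so in outline you and the paper coincide.

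However, the two places where you wave your hands are exactly where the entire content of the proof sits, and as written they do not close. First, the ``reverse duality'' move for $k=N_T+1$ is a detour: applying \eqref{eq:IBP} in the opposite direction to $\E[f(\bar{X}_{N_T+1})\,\mathcal{I}_{N_T+1}(\bar{\theta}_{N_T+1})]$ reintroduces $f'$ on the last interval rather than producing a Gaussian integral; the direct route is simply to condition and keep $\E[\mathcal{I}_{N_T+1}(\bar{\theta}_{N_T+1})\mid \bar{X}_{N_T+1}=z]$ inside, which is the paper's own convention for expressions of the form $\E_{i,n}[\delta_L(\bar{X}_{i+1})H]$ in Section \ref{sec:3a}. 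Second, and more seriously, the conditional law of $\bar{X}_{N_T+1}$ given $\bar{X}_{N_T}$ and $\rho_{N_T+1}$ is Gaussian centred at $\rho_{N_T+1}\bar{X}_{N_T}+(1-\rho_{N_T+1})(2L-\bar{X}_{N_T})$, so before averaging over $\rho_{N_T+1}$ the kernel is a signed mixture of two reflected Gaussians, not the single $g(a(\bar{X}_{N_T})(T-\zeta_{N_T}),z-\bar{X}_{N_T})$; you assert that the factor $2\rho_{N_T+1}-1$ ``collapses'' this mixture, and you assert that the collected kernel $\Phi$ equals \emph{minus} the corollary's integrand, but neither claim is derived. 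They must be: carrying out the computation in the simplest instance ($N_T=0$, $\sigma$ constant, $b=0$, so that only the $I^1_1$ term survives) gives, for the conditional-density reading, the kernel $(z-x)g(T,z-x)-(z+x-2L)g(T,z+x-2L)=-T\partial_z p(T,x,z)$, i.e.\ the collected kernel is the corollary's integrand itself and the weak identity reads $-T\partial_z p=\sum K$, not $+T\partial_z p=\sum K$. Reconciling this with the stated formula (or detecting a sign slip in it) requires tracking the $\rho_{N_T+1}$-average and the sign from the integration by parts explicitly; your proposal substitutes an assertion for exactly this bookkeeping, which is the one nontrivial step of the proof.
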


\section{Bismut-Elworthy-Li type formula}\label{BEL:sec}
In this section, we briefly derive the IBP formula for $ \partial_x\E[f(X_{T})\I_\seq{\tau\geq T}]  $ commonly referred in the literature as the Bismut-Elworthy-Li formula. As we will see, obtaining this formula is simpler than the IBP formula derived in Theorem \ref{prop:5.2} as it does not involve boundary weights. Therefore the proof of this formula will not require any merging procedure contrary to the one of Theorem \ref{prop:5.2}. 

First, we give the transfer of derivatives lemma which is carried out forward in time in comparison with Lemma \ref{lem:5.1} where it is done backward in time. The proof is similar to the one of Lemma  \ref{lem:5.1} given in Section \ref{app:tl} and therefore we omit it. Note that due to the change of time direction some changes of notation and sign occur among other changes in the formulae for the weights. 
\begin{lem}\label{lem:tran2}
	Let $f\in \mathscr{C}^1_p(\mathbb{R} )$ and $n \in \bar \N$. Then, the following transfer of derivative formula holds for $ i\in\bar{\mathbb{N}}_{n-1} $:
\begin{align*}
 \partial_{\bar X_{i}}\E_{i,n}[f(\bar X_{i+1})\I_{D_{i+1,n}}\bar \theta_{i+1}] 		& =  \E_{i,n}[\partial_{\bar X_{i+1}}f(\bar X_{i+1})\I_{D_{i+1,n}} \rtheta^e_{i+1}]\\
&\quad  + \E_{i,n}[f(\bar X_{i+1})\left(\I_{D_{i+1,n}} \rtheta^c_{i+1}+\delta_L(\bar X_{i+1}) \rtheta^\partial_{i+1}\right)],
\end{align*}
	\noindent where the r.v.'s $(\rtheta^e_{i+1},\rtheta^c_{i+1},\rtheta^\partial_{i+1}) \in {\mathbb{S}}_{i+1,n}(\bar{X}) $ are defined by
	\begin{align*}
		\nonumber
		 \rtheta^e_{i+1}:=&
		2\lambda^{-1}\left ({\mathcal{I}}_{i+1}^2(d_2^{i+1})
		+{\mathcal{I}}_{i+1}(d^{i+1}_1)
		\right ),
		\\
		{ \rtheta}_{i+1}^c:=& 
		{{\mathcal{I}}}_{i+1}\left(
		(2\rho_{i+1}-1)\bar \theta_{i+1} - \rtheta^e_{i+1}
		\right) 
		+ \partial_{\bar{X}_i}
		 \bar \theta_{i+1}
		+
		\sigma'_i{{\mathcal{I}}}_{i+1}\left(Z_{i+1}
		\bar \theta_{i+1}
		\right) ,\\
			\nonumber
		{{\rtheta}}_{i+1}^\partial:=&
		\rtheta^e_{i+1}, \\
		d_1^{i+1}:=&c_1^{i+1}+ (2\rho_{i+1}-1)\partial_{\bar{X}_{i}}c_2^{i+1},\\
		d_2^{i+1}:=&c_2^{i+1}.
	\end{align*}
	
	In a similar way, let $ f\in \mathscr{C}^1_p(\mathbb{R})$ such that $f(L) = 0$. Then, the following transfer of derivative formula is satisfied
	\begin{align*}
		 \partial_{\bar{X}_{n}}\mathbb{E}_{n,n}\big[f(\bar X_{n+1})\I_{D_{n+1,n}} \bar {\theta}_{n+1}\big]  & =  \mathbb{E}_{n,n}\big[\partial_{\bar X_{n+1}}f(\bar X_{n+1}) \I_{D_{n+1,n}} {\rtheta}^e_{n+1}\big] \nonumber  
	\end{align*}
	 with ${\rtheta}^e_{n+1}:=2e^{\lambda T} (1+(2\rho_{n+1}-1)\sigma'_n Z_{n+1})$ so that $\forall p\geq1$, $\| \rtheta^e_{n+1}\|_{p, n,n}\leq C$. We also set $\rtheta^{c}_{n+1}:= 0$ for notational convenience.
	
	 With the above definitions, for any $ i\in \bar{\mathbb{N}}_{n-1}$, we have 
		 $ \mathbb{E}_{i,n}[f(\bar X_{i+1}) \I_{D_{i+1,n}} \bar \theta_{i+1}]\big |_{\bar{X}_i=\cdot} \in \mathscr{C}^1_p(\mathbb{R} ) $ a.s. Moreover, the weights $ \rtheta^a_{i+1} $, $ a\in\{e,c,\partial\} $, $ i\in\bar{\mathbb{N}}_n $, satisfy the time degeneracy estimates. 
\end{lem}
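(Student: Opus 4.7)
The plan is to mirror the backward transfer of derivatives argument of Lemma \ref{lem:5.1}, but with the roles reversed: $\partial_{\bar X_i}$ now sits outside the conditional expectation, so instead of invoking the IBP formula to create a derivative of $f$, one pulls the derivative inside $\E_{i,n}[\cdot]$. This is justified since, conditional on $\mathcal{G}_i, T^{n+1}, \rho^{n+1}, N_T=n$, the integrand is a smooth function of the Gaussian increment $Z_{i+1}$ and depends on $\bar X_i$ through $\bar X_{i+1} = \rho_{i+1}\bar X_i + (1-\rho_{i+1})(2L-\bar X_i) + \sigma_i Z_{i+1}$ and through the coefficients of $\bar\theta_{i+1}$; the $\mathbb{L}^p$-bounds of Section \ref{sec:3a} provide the domination needed for differentiation under the integral. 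The product rule then splits $\partial_{\bar X_i}\E_{i,n}[f(\bar X_{i+1})\I_{D_{i+1,n}}\bar\theta_{i+1}]$ into three pieces $(A)$, $(B)$, $(C)$ coming respectively from differentiating $f(\bar X_{i+1})$, $\I_{D_{i+1,n}}$ (producing a $\delta_L(\bar X_{i+1})$ factor, interpreted distributionally via \eqref{eq:IBP}), and $\bar\theta_{i+1}$.

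For $(A)$, one substitutes $\partial_{\bar X_i}\bar X_{i+1} = (2\rho_{i+1}-1) + \sigma'_i Z_{i+1}$. Using $(2\rho_{i+1}-1)^2 = 1$, the factor $(2\rho_{i+1}-1)\bar\theta_{i+1}$ gives the clean piece $2\lambda^{-1}(\mathcal{I}_{i+1}(c_1^{i+1}) + \mathcal{I}_{i+1}^2(c_2^{i+1}))$, while the cross factor $\sigma'_i Z_{i+1}\bar\theta_{i+1}$ is re-expressed via the identity $\mathcal{I}_{i+1}(1) = Z_{i+1}/(\sigma_i(\zeta_{i+1}-\zeta_i))$ and the extraction formula \eqref{eq:exta}, which splits it into (i) a piece whose $\mathcal{I}_{i+1}$-integrand upgrades $c_1^{i+1}$ to $d_1^{i+1} = c_1^{i+1} + (2\rho_{i+1}-1)\partial_{\bar X_i}c_2^{i+1}$, delivering the final form of $\rtheta^e_{i+1}$, and (ii) a residual piece absorbed in $\rtheta^c_{i+1}$. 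For $(C)$, iterated application of Lemma \ref{chain:rule} to $\partial_{\bar X_i}\mathcal{I}_{i+1}(c_k^{i+1})$ yields terms of the form $\mathcal{I}_{i+1}(\partial_{\bar X_i}c_k^{i+1})$ together with $-(\sigma'_i/\sigma_i)\mathcal{I}_{i+1}(c_k^{i+1})$-type correction pieces, which, combined with the leftover pieces from $(A)$, reproduce exactly the announced expression for $\rtheta^c_{i+1}$.

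Piece $(B)$ is, by definition, of the form $\E_{i,n}[f(\bar X_{i+1})\delta_L(\bar X_{i+1})\rtheta^\partial_{i+1}]$ with $\rtheta^\partial_{i+1} = (\partial_{\bar X_i}\bar X_{i+1})\bar\theta_{i+1}$; the identification $\rtheta^\partial_{i+1} = \rtheta^e_{i+1}$ claimed in the lemma follows from the same algebraic manipulations used to rewrite the $\sigma'_i Z_{i+1}\bar\theta_{i+1}$ term in $(A)$. The last-interval case $i = n$ collapses substantially: since $\bar\theta_{n+1} = 2e^{\lambda T}(2\rho_{n+1}-1)$ does not depend on $\bar X_n$, $(C)$ vanishes; the hypothesis $f(L)=0$ kills $(B)$; and $(A)$ directly produces $\E_{n,n}[f'(\bar X_{n+1})\I_{D_{n+1,n}}\rtheta^e_{n+1}]$ with $\rtheta^e_{n+1} = 2e^{\lambda T}(1 + (2\rho_{n+1}-1)\sigma'_n Z_{n+1})$, after exploiting $(2\rho_{n+1}-1)^2 = 1$. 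The time-degeneracy estimates for $\rtheta^e_{i+1}, \rtheta^c_{i+1}, \rtheta^\partial_{i+1}$ follow by direct inspection of their explicit expressions, combined with Lemma \ref{lem:6} and the regularity of $d_k^{i+1}$ inherited from that of $c_k^{i+1}$ (cf.\ Remark \ref{rem:3}(ii)). The main obstacle is the bookkeeping for $\rtheta^c_{i+1}$: because $\partial_{\bar X_i}$ and $\mathcal{D}_{i+1}$ fail to commute (Lemma \ref{chain:rule}), the contributions to $\rtheta^c_{i+1}$ originate from several sources and must be carefully collected and simplified via \eqref{eq:exta} to coincide with the announced closed-form formula.
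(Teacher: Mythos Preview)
Your overall strategy is exactly the forward analogue of the proof of Lemma \ref{lem:5.1}, which is precisely what the paper indicates (the proof is omitted there, citing the backward case). The ingredients you list --- differentiation under $\E_{i,n}$, the flow derivative $\partial_{\bar X_i}\bar X_{i+1}$, the IBP/extraction formulae, Lemma \ref{chain:rule}, and Lemma \ref{lem:6} for the time-degeneracy --- are the right ones.

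There is, however, a genuine gap in your treatment of the boundary term. Piece $(B)$ alone produces the boundary weight $(\partial_{\bar X_i}\bar X_{i+1})\bar\theta_{i+1}$, and this is \emph{not} equal to $\rtheta^e_{i+1}$, pointwise or after localizing at $\bar X_{i+1}=L$; the ``same algebraic manipulations as in $(A)$'' do not close this. The clean route, mirroring Section \ref{app:tl}, is to compute the difference $\partial_{\bar X_i}\E_{i,n}[f\,\I_{D_{i+1,n}}\bar\theta_{i+1}]-\E_{i,n}[\partial_{\bar X_{i+1}}f\,\I_{D_{i+1,n}}\rtheta^e_{i+1}]$. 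For the first term, pull $\partial_{\bar X_i}$ inside and write $\partial_{\bar X_i}(f\,\I_{D_{i+1,n}})=\mathcal{D}_{i+1}(f\,\I_{D_{i+1,n}})\,\partial_{\bar X_i}\bar X_{i+1}$; one application of \eqref{eq:IBP} with $f\,\I_{D_{i+1,n}}$ as test function gives $\E_{i,n}[f\,\I_{D_{i+1,n}}\,\mathcal{I}_{i+1}((\partial_{\bar X_i}\bar X_{i+1})\bar\theta_{i+1})]$ with \emph{no} boundary residue. For the second term, \eqref{eq:IBP} plus extraction yields $\E_{i,n}[f\,\I_{D_{i+1,n}}\,\mathcal{I}_{i+1}(\rtheta^e_{i+1})]-\E_{i,n}[f\,\delta_L(\bar X_{i+1})\,\rtheta^e_{i+1}]$, and it is \emph{this} IBP that delivers $\rtheta^\partial_{i+1}=\rtheta^e_{i+1}$ directly. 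Subtracting then gives $\rtheta^c_{i+1}$ in the stated form.

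A second imprecision: your claim that the shift $c_1^{i+1}\to d_1^{i+1}$ ``emerges'' from rewriting $\sigma'_iZ_{i+1}\bar\theta_{i+1}$ is not how the argument runs. The correction $(2\rho_{i+1}-1)\partial_{\bar X_i}c_2^{i+1}$ in $d_1^{i+1}$ is engineered so that the $\mathcal{I}^2_{i+1}(\partial_{\bar X_i}c_2^{i+1})$ piece coming from $\mathcal{I}_{i+1}\big((2\rho_{i+1}-1)\bar\theta_{i+1}-\rtheta^e_{i+1}\big)$ cancels against the matching piece of $\partial_{\bar X_i}\bar\theta_{i+1}$ (computed via Lemma \ref{chain:rule}). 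Without this cancellation $\rtheta^c_{i+1}$ would sit in $\mathbb{M}_{i+1,n}(\bar X,-1)$ rather than $\mathbb{M}_{i+1,n}(\bar X,-1/2)$ and the time-degeneracy estimate would fail.
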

As we did before the statement of Theorem \ref{prop:5.2}, we need to define the weights that will be used in the BEL formula. It will be apparent in what follows that no boundary terms will appear so that no merging operation is needed. Similarly to Section \ref{sec4.3}, we consider the vectors of length $n+1$ defined by $\widehat C^{n+1}_{k} = (e,\dots,e, c,0,\dots, 0)$ where the symbol $c$ appears in the $k$-th coordinate for $k \in \mathbb{N}_{n+1}$ and $\widehat I^{n+1}_{k} = (e,\dots,e, \mathcal{I},0,\dots, 0)$ where the symbol $\mathcal{I}$ appears in the $k$-th coordinate for $k\in\mathbb{N}_{n+1}$. Again in order to keep index notation short, we included $\widehat C^{n+1}_{n+1} $ as a symbol but any statement in this case should be taken as an empty statement or that the symbol corresponds to an empty element. We then define the set $ \widehat{S}^k_{n+1}= \bigcup_{1\leq j \leq k}\{ \widehat C^{n+1}_j\}$ for $k\in\mathbb{N}_{n+1}$. Note that since there are no merging terms, the weights $\rtheta^e_i$ and $\rtheta^c_i$ are given by Lemma \ref{lem:tran2} applied on the set $ \{N_T=n\} $. Again, we consider the mapping, which to a vector $\bold{s} \in \widehat S^k_{n+1}$, provides the associated product of weights denoted by $\rtheta^{\bold{s}}$, namely for $i \in\mathbb{N}_{n+1}$ 
\begin{align*}
\rtheta^{C^{n+1}_{i}}:=&\prod_{\ell=i+1}^{n+1} \theta_\ell \times
\I_{D_{i,n}} \rtheta_{i}^{c} \times
\prod_{j=1}^{i-1}	\I_{D_{j,n}} \rtheta_{j}^{e},
\\
\rtheta^{{I}^{n+1}_{i}}:=&	\prod_{\ell={i}+1}^{n+1}
\theta_\ell \times \I_{D_{i,n}}
{\mathcal{I}}_{{{i}}}(
\rtheta^e_i) \times
\prod_{j=1}^{i-1} \I_{D_{j,n}} \rtheta^e_j.
\end{align*}
\begin{theorem}\label{thm:forward:ibp}
		Let $ f\in \mathscr{C}^1_b(\mathbb{R})$ such that $ f(L)=0 $.  {Using the weights defined above, the following Bismut-Elworthy-Li formula is satisfied for any initial point $x\in [L,\infty)$:}
		\begin{align*}
		{T} \partial_x	\E[f(X_{T})\I_\seq{\tau\geq T}]  =&
		\E\left [f(\bar{X}_{N_T+1})\sum_{k=1}^{N_T+1} (\zeta_k - \zeta_{k-1})\left\{ \rtheta^{I^{N_T+1}_k}+\sum_{\mathbf{s}\in \widehat{S}^k_{N_T+1}} \rtheta^{\mathbf{s}}\right\}\right].
		\end{align*}
		Moreover, the r.v. appearing inside the expectation in the right-hand side of the above equality belongs to $\mathbb{L}^p(\mathbb{P} ) $, for any $ p\in [0,2) $.
\end{theorem}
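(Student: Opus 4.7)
The plan is to adapt the six-step proof of Theorem \ref{prop:5.2} to the forward setting, using Lemma \ref{lem:tran2} (which transfers derivatives forward in time) in place of Lemma \ref{lem:5.1}. Starting from the Markov chain representation of Theorem \ref{th:3.2a}, and noting that the parameter $x$ enters only through $\bar X_0=x$, I differentiate under the expectation to obtain $\partial_x\E[f(X_T)\I_\seq{\tau>T}]=\partial_x\E[f(\bar X_{N_T+1})\prod_{i=1}^{N_T+1}\I_{D_{i,N_T}}\bar\theta_i]$. On the event $\seq{N_T=n}$, I introduce the conditional expectations $H_k(y):=\E_{k,n}[f(\bar X_{n+1})\prod_{i=k+1}^{n+1}\I_{D_{i,n}}\bar\theta_i\,|\,\bar X_k=y]$, which satisfy the forward recursion $H_{k-1}(y)=\E_{k-1,n}[H_k(\bar X_k)\I_{D_{k,n}}\bar\theta_k\,|\,\bar X_{k-1}=y]$ with $H_{n+1}=f$, so that the target derivative equals $\partial_x H_0(x)$.

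For each $k\in\mathbb{N}_{n+1}$ I propagate $\partial_x=\partial_{\bar X_0}$ forward by iteratively applying Lemma \ref{lem:tran2} with $i=0,1,\dots,k-1$. At the step $i=j$ the derivative $\partial_{\bar X_j}$ splits into three pieces: a \emph{continuing} branch where the derivative shifts to $\partial_{\bar X_{j+1}}H_{j+1}$ and the weight at interval $j+1$ becomes $\I_{D_{j+1,n}}\rtheta^e_{j+1}$; a \emph{correction} branch where the derivative is dropped and the weight at $j+1$ becomes $\I_{D_{j+1,n}}\rtheta^c_{j+1}$, producing a $\widehat C^{n+1}_{j+1}$-type contribution; and a \emph{boundary} piece of the form $\delta_L(\bar X_{j+1})\rtheta^\partial_{j+1}=\delta_L(\bar X_{j+1})\rtheta^e_{j+1}$. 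Only the continuing branch is iterated further. At the final transfer step $i=k-1$, the derivative reaches $\partial_{\bar X_k}H_k(\bar X_k)$, and I remove it using the duality \eqref{eq:IBP} combined with the extraction formula \eqref{eq:exta} applied to $\I_{D_{k,n}}$; this yields the term $\I_{D_{k,n}}\mathcal{I}_k(\rtheta^e_k)$, which is exactly $\rtheta^{I^{n+1}_k}$, together with a further boundary term $-\delta_L(\bar X_k)\rtheta^e_k$.

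The key observation, and precisely what saves us from the merging procedure needed in Theorem \ref{prop:5.2}, is the identity $\rtheta^\partial_{i+1}=\rtheta^e_{i+1}$ built into Lemma \ref{lem:tran2}: at interval $k$ the positive boundary term generated by the last transfer cancels exactly with the negative boundary term generated by extracting $\I_{D_{k,n}}$. The main technical hurdle will be the analogous bookkeeping for the boundary pieces produced at the intermediate transfer steps $j<k-1$; I expect each such $\delta_L(\bar X_{j+1})\rtheta^e_{j+1}$ to either vanish after the subsequent conditional expectation or to be absorbed pointwise through the specific definition of $\rtheta^c_{j+1}$ given in Lemma \ref{lem:tran2}, so that no $\ltheta^{\partial*e}$ or $\ltheta^{\partial\circledast e}$ merged weight ever has to be introduced. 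For the terminal interval $k=n+1$, the second part of Lemma \ref{lem:tran2} combined with $f(L)=0$ eliminates the boundary contribution directly.

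To conclude, I multiply each local identity by $(\zeta_k-\zeta_{k-1})$, sum over $k=1,\dots,n+1$ using $\sum_{k=1}^{n+1}(\zeta_k-\zeta_{k-1})=T$, take the expectation with respect to the Poisson jump times $T^{n+1}$ on $\seq{N_T=n}$, and finally sum over $n\ge 0$. This delivers the announced formula with weights $\rtheta^{I^{N_T+1}_k}$ and $\rtheta^{\mathbf{s}}$ for $\mathbf{s}\in\widehat S^{k}_{N_T+1}$. The $\mathbb{L}^p(\mathbb{P})$-integrability for $p\in[0,2)$ follows from the time-degeneracy estimates for $\rtheta^e_{i+1}$, $\rtheta^c_{i+1}$ and $\mathcal{I}_k(\rtheta^e_k)$ stated in Lemma \ref{lem:tran2}, combined with Lemma \ref{lem:7}, exactly as at the end of the proof of Theorem \ref{prop:5.2}; the absence of merged boundary terms actually makes this step strictly shorter than in the backward case.
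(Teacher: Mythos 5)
Your overall architecture matches the paper's proof: the forward recursion through $\widehat F_k$ (your $H_k$), the iteration of Lemma \ref{lem:tran2}, the local IBP on $[\zeta_{k-1},\zeta_k]$ via \eqref{eq:IBP} and \eqref{eq:exta}, the weighting by $\zeta_k-\zeta_{k-1}$ and summation using $\sum_{k}(\zeta_k-\zeta_{k-1})=T$, and the integrability argument via the time degeneracy estimates. Your observation that the boundary term produced by the last transfer at interval $k$ cancels against the one produced by extracting $\I_{D_{k,n}}$, because $\rtheta^{\partial}_{k}=\rtheta^{e}_{k}$, is correct and is a legitimate (slightly different) way of disposing of that particular pair.

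However, there is a genuine gap in your treatment of the intermediate boundary terms $\delta_L(\bar X_{j})\,\rtheta^{\partial}_{j}$ for $j<k$, which you yourself flag as "the main technical hurdle" and resolve only by expecting them to "either vanish \dots or be absorbed pointwise through the specific definition of $\rtheta^c_{j+1}$." The second alternative is false: $\rtheta^c$ plays no role in killing these terms. The first alternative is what actually happens, but it is not automatic and requires a specific symmetry argument, namely \eqref{claim:boundary:term:zero}: using the recursion \eqref{recur2} one writes $\widehat F_{j}(\bar X_j)\delta_L(\bar X_j)\rtheta^{\partial}_j$ as a conditional expectation involving $\widehat F_{j+1}(\bar X_{j+1})\I_{D_{j+1,n}}\bar\theta_{j+1}$; on the event $\{\bar X_j=L\}$ one has $\bar X_{j+1}=L+\sigma(L)Z_{j+1}$, which is independent of $\rho_{j+1}$, and the weight $\bar\theta_{j+1}$ from \eqref{eq:13aa} reduces to $2(2\rho_{j+1}-1)\lambda^{-1}$ times a quantity depending only on $Z_{j+1}$ and the jump times; since $\rtheta^{\partial}_j=\rtheta^e_j$ also does not involve $\rho_{j+1}$, the conditional expectation vanishes because $\E[2\rho_{j+1}-1\,\vert\,\mathcal{G}_{j-1},T^{n+1},\bar X_j=L]=0$. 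This Bernoulli symmetry of the reflection variable — not the identity $\rtheta^{\partial}=\rtheta^e$ — is the key mechanism that makes all boundary terms disappear and spares you the merging procedure of Theorem \ref{prop:5.2}; without supplying it, your proof of the announced formula is incomplete. (As Remark \ref{rem:9}(ii) points out, this cancellation is special to first-order derivatives, which is further evidence that it cannot be taken for granted.)
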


\begin{proof}  As in the proof of Theorem \ref{prop:5.2}, we have  
\begin{align*}
	\E[f(X_{T})\I_\seq{\tau\geq T}]  & = \sum_{n\geq0} \E\Big[\E[f(  \bar{X}_{n+1})  \prod_{i=1}^{n+1} \I_{D_{i,n}}\bar \theta_{i} \vert T^{n+1} ] \,\I_{\{N_T = n\}} \Big]  .
\end{align*}
 In most of the arguments below, we will work on the set $\{N_T=n\} $. In order to perform a forward induction argument through the  Markov chain structure, we define for $ k\in\mathbb{N}_{n+1}$ the functions 
\begin{align*}
	 \widehat F_{k}(\bar X_k)
	:= \E_{k,n}\big[f(\bar{X}_{n+1})\prod_{i=k+1}^{n+1} \I_{D_{i,n}} \bar \theta_{i}\big] = \E\big[f(\bar{X}_{n+1})\prod_{i=k+1}^{n+1}  \I_{D_{i,n}} \bar \theta_{i}| \bar{X}_k, T^{n+1}, \rho^{n+1}, N_T=n\big].
\end{align*}

We let $\widehat F_{n+1}(\bar X_{n+1}):=f(\bar{X}_{n+1})$ and the following recursive relation is satisfied for $ k\in\mathbb{N}_n $
	\begin{align}
	\widehat F_k(\bar X_k)= \mathbb{E}_{k,n}[ \widehat F_{k+1}(\bar X_{k+1}) \I_{D_{k+1,n}} \theta_{k+1} ].\label{recur2}
	\end{align}

Then, iterating the transfer of derivative formula in Lemma \ref{lem:tran2}
for $k\in\mathbb{N}_n$,  we obtain\footnote{As before, we use the convention $\sum_{\emptyset} \cdots = 0$, $\prod_{\emptyset} \cdots =1$.}
	\begin{align}
	\label{eq:29}
\partial_x\E[  f(  
	\bar{X}_{n+1}) \prod_{i=1}^{n+1}\I_{D_{i,n}} \bar \theta_{i} |\, T^{n+1}] 
	&= \E[\mathcal{D}_k\widehat F_{k}(\bar{X}_{{k}}) \prod_{i=1}^{k} \I_{D_{i,n}}\rtheta^e_{i} |\, T^{n+1}]+ \sum_{j=1}^{k}\E[\widehat F_{j}(\bar{X}_{{j}})\I_{D_{j,n}} \rtheta_{j}^c \prod_{i=1}^{{j}-1} \I_{D_{i,n}}  \rtheta^e_i |\, T^{n+1}]  \\
	& \qquad + \sum_{j=1}^{k}\E[\widehat F_{j}(\bar{X}_{{j}}) \delta_L(\bar{X}_{{j}})\rtheta_{j}^{\partial} \prod_{i=1}^{{j}-1} \I_{D_{i,n}}  \rtheta^e_i |\, T^{n+1}]. \nonumber
	\end{align}

To further simplify the first term on the right-hand side of the above equation, we use the tower property of conditional expectation, the integration by parts formula \eqref{eq:IBP} and the extraction formula \eqref{eq:exta} to obtain
	\begin{align*}
	\E[ \mathcal{D}_k\widehat F_{k}(\bar{X}_{{k}}) \I_{D_{k,n}}\rtheta^e_{k} \,\vert\, \mathcal{G}_{{k-1}},T^{n+1}] & = \E[\widehat F_{k}(
	\bar{X}_{{k}})\I_{D_{k,n}}
	 {\mathcal{I}}_{ {k}}(\rtheta^e_{k}) \,\vert\, \mathcal{G}_{{k-1}},T^{n+1}] \\
	& \qquad - \E[\widehat F_{k}(\bar{X}_{{k}}) \delta_L(\bar{X}_{{k}})  \rtheta^e_{k} \,\vert\, \mathcal{G}_{{k-1}},T^{n+1}].
			\end{align*}

	In the case $k = n$, using the transfer of derivative formula of Lemma \ref{lem:tran2} on the last time interval and then performing the IBP formula \eqref{eq:IBP}, noting that $f(L) = 0$, we obtain the representation
	\begin{align}
	 \partial_x\E[  f(\bar{X}_{n+1}) \prod_{i=1}^{n+1} \I_{D_{i,n}} \bar \theta_{i} |\, T^{n+1}] &=  \E[\mathcal{D}_{n+1}f(\bar{X}_{n+1})\prod_{i=1}^{n+1} \I_{D_{i,n}} \rtheta^{e}_i |\, T^{n+1}] \nonumber\\
	& \qquad + \sum_{j=1}^{n}\E[\widehat F_{j}(\bar{X}_{{j}}) \I_{D_{j,n}} \rtheta_{j}^c \prod_{i=1}^{{j}-1} \I_{D_{i,n}} \rtheta^e_i |\, T^{n+1}] \nonumber \\
	& \qquad + \sum_{j=1}^{n}\E[\widehat F_{j}(\bar{X}_{{j}}) \delta_L(\bar{X}_{{j}})\rtheta_{j}^{\partial} \prod_{i=1}^{{j}-1} \I_{D_{i,n}}  \rtheta^e_i |\, T^{n+1}]. \nonumber \\
	& = \E[f(\bar{X}_{n+1}) \I_{D_{n+1,n}} \mathcal{I}_{n+1}(\rtheta^{e}_{n+1}) \prod_{i=1}^{n} \I_{D_{i,n}}\rtheta^{e}_i |\, T^{n+1}]\nonumber \\
	& \qquad + \sum_{j=1}^{n+1}\E[\widehat F_{j}(\bar{X}_{{j}}) \I_{D_{j,n}} \rtheta_{j}^c \prod_{i=1}^{{j}-1} \I_{D_{i,n}}  \rtheta^e_i |\, T^{n+1}] \label{nn} \\
	&  \qquad + \sum_{j=1}^{n}\E[\widehat F_{j}(\bar{X}_{{j}}) \delta_L(\bar{X}_{{j}})\rtheta_{j}^{\partial} \prod_{i=1}^{{j}-1} \I_{D_{i,n}}  \rtheta^e_i |\, T^{n+1}] \nonumber 
	\end{align}	

\noindent where we remind the reader that we previously set $\rtheta^{e}_{n+1} =0$ in Lemma \ref{lem:tran2} for notational convenience.
 At this stage we emphasize that all the boundary terms in \eqref{nn} vanish. In fact, as $ f(L) =0$, the boundary term in the last interval vanishes as stated in Lemma \ref{lem:tran2}. For the other intervals, from  \eqref{recur2}, we claim that for $ j\in\mathbb{N}_n $,
\begin{equation}\label{claim:boundary:term:zero}
\E[\widehat F_{j+1}(\bar X_{j+1})\I_{D_{j+1,n}} \bar \theta_{j+1}\delta_L(\bar X_{j})\rtheta^\partial_{j}\,\vert\, \mathcal{G}_{j-1},T^{n+1}] = 0.
\end{equation}
To see this, we note that for $\bar X_j = L$, one has $\bar X_{j+1} = L+\sigma(L)Z_{j+1}$, which is independent of $\rho_{j+1}$, and $\bar \theta_{j+1}$ given by \eqref{eq:13aa} reduces to $\bar \theta_{j+1} = 2 (2\rho_{j+1}-1)\lambda^{-1}( \frac12(a(\bar{X}_{j+1})-a(L)) \mathcal{I}^2_{j+1}(1) + (b(L)-a'(\bar{X}_{j+1})) \mathcal{I}_{j+1}(1) + \frac{a''(\bar{X}_{j+1})}{2}-b'(\bar{X}_{j+1}))$. Therefore the conclusion follows by conditioning with respect to $\left\{ \bar{X}_{j} = L\right\}$ in \eqref{claim:boundary:term:zero} and by noting that $ \E[2\rho_{j+1}-1|\mathcal{G}_{j-1}, T^{n+1}, \bar{X}_{j} = L]=0 $.

From this property the identity \eqref{eq:29} becomes
\begin{align*}
\partial_x\E[  f(\bar{X}_{n+1}) \prod_{i=1}^{n+1} \I_{D_{i,n}} \bar \theta_{i} |\, T^{n+1}] 
=& \E[\widehat F_{k}(\bar{X}_{{k}})\I_{D_{k,n}} {\mathcal{I}}_{ {k}}(\rtheta^e_{k}) \prod_{i=1}^{k-1} \I_{D_{i,n}} \rtheta^e_{i} |\, T^{n+1}] \\
&  + \sum_{j=1}^{k}\E[\widehat F_{j}(\bar{X}_{{j}}) \I_{D_{j,n}} \rtheta_{j}^c \prod_{i=1}^{{j}-1} \I_{D_{i,n}}\rtheta^e_i |\, T^{n+1}].
\end{align*}

Now, for each $k\in\mathbb{N}_n$, one can multiply the above equality by the length of the interval on which the local IBP formula is performed, namely $\zeta_{k}- \zeta_{k-1}$ and sum them over all $k$. For the last interval, we multiply \eqref{nn} by $T-\zeta_{n}$. This gives 
\begin{align*}
&  T\partial_x\E[  f(  
	\bar{X}_{n+1}) \prod_{i=1}^{n+1} \I_{D_{i,n}} \bar \theta_{i} |\, T^{n+1}]\\
	=&\sum_{k = 1}^{n+1} (\zeta_{k}-\zeta_{k-1})\E[f(\bar{X}_{n+1}) \prod_{i=k+1}^{n+1} \I_{D_{i,n}} \bar \theta_{i} \times \I_{D_{k,n}} \mathcal{I}_{k}(\rtheta^e_k) \times \prod_{i=1}^{k-1}\I_{D_{i,n}} \rtheta^e_{i} |\, T^{n+1}]\\
	 & +\sum_{k = 1}^{n+1}(\zeta_{k}-\zeta_{k-1})\sum_{j=1}^{k}\E[f(\bar{X}_{n+1}) \prod_{i=j+1}^{n+1} \I_{D_{i,n}} \bar \theta_i \times  \I_{D_{j,n}}\rtheta_{j}^c \times \prod_{i=1}^{{j}-1}\I_{D_{i,n}} \rtheta^e_i |\, T^{n+1}].
\end{align*}
	From the above formula, the $\mathbb{L}^p$-moment estimate, $p\in [0,2)$, follows by similar arguments as described at the end of the proof of Theorem \ref{th:3.2a}. Finally, one concludes by using the Lebesgue differentiation which yields
$$ 
	T \partial_x \E[f(X_{T})\I_\seq{\tau\geq T}]  = T \partial_x \E[  f(  \bar{X}_{N_T+1}) \prod_{i=1}^{N_T+1} \theta_{i} ] = \sum_{n\geq0} \E\Big[T \partial_x\E[f(  \bar{X}_{n+1})  \prod_{i=1}^{n+1} \theta_{i} \vert T^{n+1} ] \,\I_{\{N_T = n\}} \Big]
$$

\noindent and summing the previous formula over $n$.
\end{proof}

\begin{remark}\label{rem:9}(i) The right hand side of the IBP formula may alternatively be written as 
\begin{align*}
	{T} \partial_x \E[f(X_{T})\I_\seq{\tau\geq T}]  & = \E\left[f(\bar{X}_{N_T+1})\sum_{k=1}^{N_T+1} (\zeta_k - \zeta_{k-1}) \left\{ \rtheta^{I^{N_T+1}_k} + \sum_{j=1}^{k} \rtheta^{C^{N_T+1}_j}  \right\}\right].
\end{align*}

\noindent (ii) We note that the above formula does not involve any merging procedure. This is due to the fact that only the first derivative is being considered here. In fact, the key property \eqref{claim:boundary:term:zero} would not be satisfied if one considers second order derivatives. Therefore, a merging procedure similar to the one described in Section \ref{sec:conv} would be necessary. 

\noindent (iii) Similarly to the previous section, the above theorem yields a probabilistic representation for the derivative of the transition density of the killed process at time $T$ from which stems an unbiased Monte Carlo simulation method. In contrast with Corollary \ref{cor:ibp:backward}, the derivative is taken with respect to the starting point. This can be seen by formally taking the Dirac mass at point $z$ as a test function in Theorem \ref{thm:forward:ibp}.\footnote{We again refer the interested reader to \cite{Men1} or to \cite{FKL1} for an analytical proof of the differentiability of the map $[L,\infty) \ni x\mapsto p(T, x, z)$.} 
\end{remark}

\begin{corol}
	\label{cor:ibp:forward}
	Under assumption \textbf{(H)}, the transition density of the killed process at time $T$ is differentiable with respect to its starting point. Moreover,  for all $ (T, x ,z) \in (0,\infty)\times [L,\infty)^2$ the following probabilistic representation holds
	\begin{align*}
	T  \partial_x p(T, x,  z) = &\E\left [g(a(\bar{X}_{N_T})(T-\zeta_{N_T}), z-\bar{X}_{N_T})\sum_{k=1}^{N_T+1} (\zeta_k - \zeta_{k-1})\left\{ \rtheta^{I^{N_T+1}_k}+\sum_{\mathbf{s}\in \widehat{S}^k_{N_T+1}} \rtheta^{\mathbf{s}}\right\}\right].
		\end{align*}
\end{corol}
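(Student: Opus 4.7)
The plan is to derive the density representation by applying the Bismut--Elworthy--Li formula of Theorem \ref{thm:forward:ibp} to a family $(f_\epsilon)_{\epsilon > 0} \subset \mathscr{C}^1_b(\mathbb{R})$ approximating the Dirac mass $\delta_z$, chosen so that $f_\epsilon(L)=0$ (for instance, take $f_\epsilon$ smooth and supported in a small interval around $z$, strictly to the right of $L$), and then passing to the limit $\epsilon\to 0$. This realizes rigorously the formal Dirac substitution mentioned in Remark \ref{rem:9}(iii).

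On the left-hand side, under Assumption \textbf{(H)} the killed transition density $(T,x,z)\mapsto p(T,x,z)$ is continuously differentiable with respect to $x$ on $[L,\infty)^2$, a fact established analytically in \cite{FKL1}. Therefore $T\,\partial_x \E[f_\epsilon(X_T)\I_{\{\tau\geq T\}}] = T\,\partial_x\!\int_L^\infty f_\epsilon(y)\,p(T,x,y)\,dy \to T\,\partial_x p(T,x,z)$ as $\epsilon\to 0$. For the right-hand side, the idea is to condition on $\mathcal{H}_{N_T} := \sigma(\mathcal{G}_{N_T}, T^{N_T+1}, \rho^{N_T+1})$: on $\{N_T=n\}$, the conditional law of $\bar{X}_{n+1}$ given $\mathcal{H}_{N_T}$ is Gaussian with mean $m_{n+1} := \rho_{n+1}\bar{X}_n + (1-\rho_{n+1})(2L-\bar{X}_n)$ and variance $a(\bar{X}_n)(T-\zeta_n)$. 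Viewing each $\rtheta^{\bold{s}}$ as a smooth function of the terminal variable $\bar{X}_{N_T+1}$, one rewrites $\E[f_\epsilon(\bar{X}_{N_T+1})\,\rtheta^{\bold{s}}\mid \mathcal{H}_{N_T}]$ as the integral of $f_\epsilon(y)\,\rtheta^{\bold{s}}(y)$ against this Gaussian density and passes to the limit $\epsilon\to 0$ by dominated convergence, which is justified by the $\mathbb{L}^p$-integrability ($p\in[0,2)$) of the weights stated in Theorem~\ref{thm:forward:ibp} together with standard Gaussian tail controls.

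Once the limit is taken, the $(2\rho_{N_T+1}-1)$ factors embedded in the weights through $\bar\theta_{N_T+1}$ and $\rtheta^e_{N_T+1}$ collapse the $\rho_{N_T+1}$-mixture originating from the reflection-valued mean $m_{N_T+1}$ into a single Gaussian centered at $\bar{X}_{N_T}$, in exactly the way sketched in Lemma~\ref{lem:1}; this produces the clean factor $g(a(\bar{X}_{N_T})(T-\zeta_{N_T}),\, z-\bar{X}_{N_T})$ appearing in the statement. The main obstacle is this final algebraic collapse together with the uniform integrability needed to exchange the $\epsilon$-limit with the expectation: one must track how each explicit dependence of $\rtheta^{\bold{s}}$ on $\bar{X}_{N_T+1}$ and on $\rho_{N_T+1}$ interacts with the conditional Gaussian density. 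The measure-theoretic step is standard given the moment bounds already at hand, while the combinatorial identification is the delicate part, although Theorem~\ref{thm:forward:ibp} has been set up precisely to make it tractable.
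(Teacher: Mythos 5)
Your overall strategy --- regularize $\delta_z$ by $f_\epsilon\in\mathscr{C}^1_b(\mathbb{R})$ with $f_\epsilon(L)=0$, apply Theorem \ref{thm:forward:ibp}, use the analytic differentiability of $x\mapsto p(T,x,z)$ from \cite{FKL1} on the left-hand side, and pass to the limit on the right-hand side by conditioning on the last interval and using the $\mathbb{L}^p$-bounds for dominated convergence --- is exactly the route the paper has in mind: the paper offers no proof beyond Remark \ref{rem:9}(iii) (``formally taking the Dirac mass at point $z$ as a test function'') together with the omitted ``standard approximation argument'' invoked for Corollary \ref{cor:ibp:backward}. Up to that point your proposal is sound.

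The genuine gap is in your final paragraph. Conditionally on your $\mathcal{H}_{N_T}$, the law of $\bar X_{N_T+1}$ is Gaussian with mean $m_{N_T+1}=\rho_{N_T+1}\bar X_{N_T}+(1-\rho_{N_T+1})(2L-\bar X_{N_T})$, so the limit of the right-hand side is $\E\bigl[g(a(\bar X_{N_T})(T-\zeta_{N_T}),\,z-m_{N_T+1})\,\Theta(z)\bigr]$, with $\Theta(z)$ the total weight evaluated at $\bar X_{N_T+1}=z$. You then assert that the $(2\rho_{N_T+1}-1)$ factors ``collapse'' this into a single Gaussian centered at $\bar X_{N_T}$ ``in exactly the way sketched in Lemma \ref{lem:1}.'' That is not what Lemma \ref{lem:1} delivers: taking $f=\delta_z$ there gives $2\E[(2\rho-1)\delta_z(\bar X_T)\I_{\{\bar X_T\geq L\}}]=g(a(x)T,z-x)-g(a(x)T,z-(2L-x))$, i.e.\ the reflected \emph{two-term} kernel, and the arguments $z-\bar X_{N_T}$ and $z-(2L-\bar X_{N_T})$ do not coincide unless $z=L$ or $\bar X_{N_T}=L$. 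Moreover several of the weights depend on $(\rho_{N_T+1},Z_{N_T+1})$ in a way that is not a clean multiplicative factor $(2\rho_{N_T+1}-1)$ --- for instance $\mathcal{I}_{N_T+1}(\rtheta^e_{N_T+1})$ with $\rtheta^e_{N_T+1}=2e^{\lambda T}(1+(2\rho_{N_T+1}-1)\sigma'_{N_T}Z_{N_T+1})$ --- so the last-interval integration does not factor as a single Gaussian at $z-\bar X_{N_T}$ times a weight free of the last increment. Already for $N_T=0$ the computation produces both $g(a(x)T,z-x)$ and $g(a(x)T,z-(2L-x))$. To close the proof you must carry out this terminal-interval identification term by term and then reconcile it with the single-Gaussian form in the statement (the same care is needed to make the statement itself precise); as written, the ``algebraic collapse'' is asserted rather than proved, and the mechanism you invoke for it does not produce it.
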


\section{Achieving finite variance by importance sampling}\label{importance:sampling:section}
The previous probabilistic representations of Theorems \ref{th:3.2a}, \ref{prop:5.2} and \ref{thm:forward:ibp} as well as Corollaries \ref{cor:ibp:backward} and \ref{cor:ibp:forward} allow to devise an unbiased Monte Carlo simulation. However, in general, its use is hampered by the fact that the variance is infinite as suggested for instance by the moment estimate of Lemma \ref{lem:4.1}. The main tool that we develop here in order to circumvent this issue consists in employing an importance sampling scheme on the jump times of the Poisson process $N$ as originally proposed by Andersson and Kohatsu-Higa \cite{APKA}. Since the arguments developed below follow similar lines of reasonings as those employed in \cite{APKA}, we will omit some technical details.
 
Let us first introduce a renewal process in the following sense:
\begin{definition}\label{counting:process}
Let $(T_n)_{n\geq1}$ be a sequence of random variables such that $(T_{n} - T_{n-1})_{n\geq 1}$, with the convention $T_0=0$, are i.i.d. with density $f$ and c.d.f: $t\mapsto F(t) = \int_{-\infty}^{t} f(s) \, ds$. Then, the renewal process $J:=(J_t)_{t\geq0}$ with jump times $(T_n)_{n\geq1}$ is defined by $J_t:=\sum_{n\geq1} \textbf{1}_{\left\{ T_n \leq t\right\}}$.
\end{definition}

As previously done, we assume that $J$ is independent of the Brownian motion $W$. It is readily seen that $\left\{ J_t = n \right\} = \left\{T_n \leq t < T_{n+1} \right\}$ and by an induction argument that we omit, one may prove that the joint distribution of $(T_1, \cdots, T_n)$ is given by
$$
\mathbb{P}(T_1 \in ds_1, \cdots, T_n \in ds_n) = \prod_{j=0}^{n-1} f(s_{i+1}-s_i) \textbf{1}_{\left\{ 0 < s_1 < \cdots < s_n\right\}}
$$

\noindent which in turn implies
\begin{align}
\E[\textbf{1}_{\left\{ J_t =n \right\}} \Phi(T_1, \cdots, T_n)] & = \E[\textbf{1}_{\left\{T_n \leq t < T_{n+1}\right\}} \Phi(T_1, \cdots, T_n)] \\
& = \int_t^{\infty} \int_{\Delta_n(t)} \Phi(s_1, \cdots, s_n) \prod_{j=0}^{n} f(s_{j+1}-s_j) \, d\bold{s}_{n+1} 
\end{align}

\noindent so that, by Fubini's theorem
\begin{align}
\E[\textbf{1}_{\left\{ J_t =n \right\}} \Phi(T_1, \cdots, T_n)] & = \int_{\Delta_n(t)} \Phi(s_1, \cdots, s_n) (1- F(t-s_{n})) \prod_{j=0}^{n-1} f(s_{j+1}-s_j) \, d\bold{s}_{n} \label{probabilistic:representation:time:integrals}
\end{align}

\noindent for any map $\Phi: \Delta_n(t) \rightarrow \R$ satisfying $\E[\textbf{1}_{\left\{ J_t =n \right\}} |\Phi(T_1, \cdots, T_n)|] < \infty$. Usual choices that we will consider are the followings:

\noindent \textbf{Examples:}
\begin{enumerate}
\item If the density function $f$ is given by $f(t) = \lambda e^{-\lambda t} \textbf{1}_{[0,\infty)}(t)$ so that $F(t)= 1- e^{-\lambda t}$, $t\geq0$, for some positive parameter $\lambda$, then $J$ is a Poisson process with intensity $\lambda$.
  
\item If the density function $f$ is given by $f(t) = \frac{1-\alpha}{\bar{\tau}^{1-\alpha}}\frac{1}{t^{\alpha}} \textbf{1}_{[0, \bar{\tau}]}(t)$, so that $F(t)= (t/\bar \tau)^{1-\alpha}$, $t\in [0,\bar \tau]$, for some parameters $(\alpha, \bar{\tau})\in (0,1) \times (0,\infty)$, then $J$ is a renewal process with $[0,\bar{\tau}]$-valued $Beta(1-\alpha, 1)$ jump times. 

\item More generally, if the density function $f$ is given by $f(t) = \frac{\bar{\tau}^{1-\alpha-\beta}}{B(\alpha,\beta)}\frac{1}{t^{1-\alpha}(\bar{\tau}-t)^{1-\beta}} \textbf{1}_{[0, \bar{\tau}]}(t)$, so that $F(t) = B(t/\bar \tau, \alpha, \beta)/ B(\alpha, \beta)$, $[0,1] \ni x \mapsto B(x, \alpha, \beta)$ being the incomplete Beta function, for some parameters $(\alpha, \beta, \bar{\tau})\in (0,1)^2 \times (0,\infty)$, then $J$ is a renewal process with $[0,\bar{\tau}]$-valued $Beta(\alpha, \beta)$ jump times.
 \end{enumerate}

Having these definitions at hand, as done before, we define the partition $\pi$ of $ [0,T]$ given by $\pi:=\{0=:\tau_0<\cdots<\tau_{J_T} \leq T\} $ with $\tau_i := T_i \wedge T$. The probabilistic representation of Theorem \ref{th:3.2a} then becomes
\begin{align}
	\label{eq:PR:modified}
	\E\left[f(X_{T})\I_\seq{\tau> T}\right ]  &= \mathbb{E}\Big[  f(  
	\bar{X}
	_{J_T+1}) \prod_{{i}=1}^{J_T+1}  \I_{D_{i,J_T}}\bar{\theta}_i  \Big].
\end{align}
\noindent where the dynamics of the Markov chain $\bar X$ is given by \eqref{eq:MCa} with innovations $\left\{ Z_{i+1} = W_{\tau_{i+1}} - W_{\tau_i}, i=0, \cdots, J_T\right\}$, with the set $ D_{i,n}:=\{\bar{X}_{i}\geq L, J_T=n\}$ , $i\in \bar{\N}_{n+1}$, and for $i \in \bar \N_{J_T}$
\begin{align}
	\label{eq:new:weights:theta}
	\bar{\theta}_i :=&\I_\seq{J_T>i-1}
	2(2\rho_i-1) (f(\tau_{i} - \tau_{i-1}))^{-1}\left({{\mathcal{I}}}_{{{i}}}(c^i_1)+{{\mathcal{I}}}^2_{ {{i}}}(c^i_2)\right)
	+\I_\seq{J_T=i-1} (1-F(T-\tau_{J_T}))^{-1} (2\rho_{J_T+1}-1).
\end{align}
	
Moreover, the following estimates holds on the set $\left\{ J_T= n \right\}$
\begin{equation}
		\label{moment:estimates:prob:representation:new:weight} 
		  \I_\seq{i \leq  n}(\tau_{i}-\tau_{i-1})^{\frac{p}{2}} f(\tau_i - \tau_{i-1})^{p} \I_{D_{i-1,n}}\E_{i-1,n}\left[\I_{D_{i,n}}|{\bar\theta}_i |^p\right]+\I_\seq{i=n+1}\I_{D_{n,n}} (1-F(T-\tau_{n}))^{p}\E_{n,n}\left[\I_{D_{n+1,n}}|{\bar\theta}_{n+1}|^p\right ]\leq C
\end{equation}

\noindent where $C$ is positive constant independent of $n$. Hence, using \eqref{moment:estimates:prob:representation:new:weight} and then \eqref{probabilistic:representation:time:integrals}, for all $p\geq1$, one gets
\begin{align*}
\E\left[\Big|\prod_{i=1}^{J_T+1}\I_{D_{i,J_T}}\bar{\theta}_i\, \Big|^p\right]& = \sum_{n\geq0} \E\left[\Big|\prod_{i=1}^{n+1}\I_{D_{i,n}}\bar{\theta}_i\, \Big|^p \I_\seq{J_T=n}\right] \\
&  \leq C \sum_{n\geq0} C^{n+1} \int_{A_n} (1-F(T-t_n))^{-p+1} \prod_{i=1}^{n} (t_{i}-t_{i-1})^{-\frac{p}{2}} f(t_i-t_{i-1})^{-p+1} \, dt_1 \cdots dt_n.
\end{align*}

As already mentioned before, the previous estimate is actually quite sharp and the series appearing in the right-hand side is finite for $p\in [0,2)$ if $J$ is a Poisson process. In order to achieve a finite variance, one has to select the law of the jump times suitably. Indeed, if for instance $J$ is a renewal process with $[0,\bar{\tau}]$-valued $Beta(1-\alpha, 1)$ jump times, $\alpha \in (0,1)$, $\bar \tau >T$, a simple computation shows that the above series is finite as soon as $-\frac{p}{2} + \alpha (p-1) > -1$, that is, $p(\frac12-\alpha) < 1-\alpha$. In particular, taking $\alpha=1/2$, it is readily seen that the moment of order $p$ of the random variable appearing inside the expectation in the right-hand side of \eqref{eq:PR:modified} is finite for all $p\geq1$. Similarly, if $J$ is a renewal process with $[0,\bar{\tau}]$-valued $Beta(1-\alpha, 1-\beta)$ jump times, $(\alpha, \beta) \in (0,1)^2$, $\bar \tau >T$, the integral appearing in the right-hand side of the above inequality is finite as soon as $p(\frac12-\alpha) < 1-\alpha$ and $-p \beta< 1-\beta$, the later condition being always satisfied. In particular, taking $\alpha=1/2$ and any $\beta \in (0,1)$, any moment of order $p$ for $p\geq1$ is finite.

We now provide the probabilistic representation for the two IBP formulas using the above importance sampling technique. We thus redefine the weights appearing in the first IBP formula keeping in mind that the sequence $(\bar{\theta}_i)_{1\leq i \leq J_T+1}$ is now given by \eqref{eq:new:weights:theta}. The new random variables $(\ltheta^e_{i+1},\ltheta^c_{i+1},\ltheta^\partial_{i+1}) \in {\mathbb{S}}_{i+1,n}(\bar{X}) $ of Lemma \ref{lem:5.1} are now defined by: for $i \in \bar \N_{J_T-1} $
	\begin{align}
		\nonumber
		\ltheta^e_{i+1}:=&
		2 (f(\tau_{i+1}-\tau_i))^{-1} \left ({\mathcal{I}}_{i+1}^2(d_2^{i+1})
		+{\mathcal{I}}_{i+1}(d^{i+1}_1)
		\right ),
		\\
		\label{eq:defDc:new}
		{{\ltheta}}_{i+1}^c:=&
		{{\mathcal{I}}}_{i+1}\left(
		\bar\theta_{i+1}-(2\rho_{i+1}-1)
		\ltheta^e_{i+1}
		\right) -\partial_{\bar{X}_i}
		\ltheta^e_{i+1}
		-
		\sigma'_i{{\mathcal{I}}}_{i+1}\left(Z_{i+1}
		\ltheta^e_{i+1}
		\right) ,\\
			\nonumber
		{{\ltheta}}_{i+1}^\partial:=&
		2(2\rho_{i+1}-1) (f(\tau_{i+1}-\tau_i))^{-1} (a'(L)-b(L)){\mathcal{I}}_{i+1}(1)
	\end{align}
\noindent and $ \ltheta^\partial_{J_T+1}:=0 $, ${\ltheta}^e_{J_T+1}:=2 (1-F(T-\tau_{J_T}))^{-1}$ and ${\ltheta}^c_{J_T+1}:=-2 (1-F(T-\tau_{J_T}))^{-1} (\sigma'\sigma)_{J_T} (T-\tau_{J_T}){\mathcal{I}}^2_{J_T+1}(1)$. For the boundary merging weights of Lemmas \ref{lem:4.3} and \ref{lem:4.3a}, we first redefine the Markov chain $\bar{X}^{\partial}$ with dynamics \eqref{eq:dbp} by modifying the corresponding increments $Z_{j, i+1}$ as done previously. We then set $D^\partial_{j, i+1, n}:=\{\bar{X}^\partial_{j, i+1}\geq L, J_T=n\}, \, j=i-1, i$. Finally, on the set $\left\{ J_T =n \right\}$, 
\begin{align}
\ltheta^{\partial *e}_{j,i+1}:=&
4 (f(\tau_{i+1}-\tau_j))^{-1} \frac{a'(L)-b(L)}{a_{j}} \left(\bar{{\mathcal{I}}}_{j,i+1}^2(\bar{d}^{i+1}_2) + \bar{{\mathcal{I}}}_{j,i+1}(\bar{d}^{i+1}_1) \right), \label{new:boundary:merging:1}\\
\ltheta^{\partial  \circledast e}_{j,i+1} := &4 (f(\tau_{i+1}-\tau_j))^{-1} \frac{a'(L)-b(L)}{ a_{j}^{3/2}\sigma(L)}(\bar{X}_{j}-L)  \left(\bar{{\mathcal{I}}}_{j,i+1}^2(\hat{d}^{i+1}_2) + \bar{{\mathcal{I}}}_{j,i+1}(\hat{d}^{i+1}_1)\right)\label{new:boundary:merging:2}
\end{align}
\noindent for $j= i -1 , i$ with coefficients given by
		\begin{align*}
		\hat{d}^{i+1}_k:=&\bar{d}^{i+1}_k\times(\bar{\Phi}g^{-1})(a(L)(\tau_{i+1}-\tau_{j}),Z_{j,i+1}),\quad k=1,2,
		\end{align*}
\noindent  and ${\ltheta}^{\partial*e}_{j,n+1} := 4 (1-F(T-\tau_{j}))^{-1} \frac{a'(L)-b(L)}{a_{j}}$, $  {\ltheta}^{\partial  \circledast e}_{j,n+1 } :=4  (1-F(T-\tau_{j}))^{-1} \frac{2a'(L)-b(L)} { a_{j}^{3/2}\sigma(L)}(\bar{X}_{j}-L) \hat{d}^{n+1}$ for $j=n-1, n$ with $\hat{d}^{n+1}:=(\bar{\Phi}g^{-1})(a(L)(\tau_{n+1}-\tau_{j}),Z_{j,n+1} )$. 

At this stage, it is important to remark that the above new weights satisfy a time degeneracy estimate, with a slight modification of Definition \ref{def:td}. The notation $\E_{i, n}[X]$ now is used for the expectation of $X$ conditional on $\left\{ \mathcal{G}_i, T^{n+1}, \rho^{n+1}, J_T= n\right\}$ and one considers the corresponding norm $\|.\|_{p, i,n}$. We now say that a weight $ H\in \mathbb{S}_{i, n} $ satisfies the time degeneracy estimate if for all $ p\geq 1 $
\begin{equation}
\label{modification:degeneracy:estimate}
 \I_{D_{i-1,n}}\left\|\I_{D_{i,n}}H\right\|_{p,i-1,n}\leq Cf(\tau_{i}-\tau_{i-1})^{-1}(\tau_i-\tau_{i-1})^{-\frac 12} 
\end{equation}

\noindent in the case that $ i \in \N_n$ and $ \I_{D_{n, n}}\left\|\I_{D_{n+1,n}}H\right \|_{p, n, n}\leq C $ in the case that $ i=n+1 $. In a completely analogous manner as done in Lemma \ref{lem:5.1}, the weights $\ltheta^{a}_i$ for $a\in \left\{ e, c, \partial \right\}$ satisfies the time degeneracy estimate \eqref{modification:degeneracy:estimate}. For the boundary merging weights, we replace \eqref{modification:degeneracy:estimate} for $ H\in \mathbb{S}_{j,i+1,n}(\bar{X}^\partial) $ by 
\begin{align}
\label{modification:time:degeneracy:estimate:boundary:process}
\forall p\geq1, \quad \I_{D_{j,n}}\E\left[\I_{D^\partial_{j,i+1,n}}|H|^p \Big| \mathcal{G}_{j-1},\tau_{i+1},J_T=n\right] \leq C (f(\tau_{i+1}-\tau_j))^{-p}(\zeta_{i+1}-\zeta_j)^{-\frac p2}, \, i\in\mathbb{N}_{n-1}, 
\end{align}

\noindent for $j=i-1,i$ and$ \I_{D_{j,n}}\E\left[\I_{D^\partial_{j,n+1,n}}|H|^p \Big| \mathcal{G}_{j-1}, J_T=n\right] \leq C $ for $j=n-1,\, n$. Doing so, the new boundary merging weights $\ltheta^{\partial *e}_{j,i+1}$ and $\ltheta^{\partial  \circledast e}_{j,i+1}$ defined respectively by \eqref{new:boundary:merging:1} and \eqref{new:boundary:merging:2} both satisfy the time degeneracy estimate \eqref{modification:time:degeneracy:estimate:boundary:process}.
	
With the above new definitions and properties, we finally redefine the corresponding weights $\ltheta^{\bold{s}}$ with the related Markov chain $\bar{X}^{\bold{s}}$ for each $ \bold{s}\in S_{n+1}$ or $ \bold{s}\in \dot{S}^k_{n+1}$ on the time partition $\pi:=\left\{0=: \tau_0 < \cdots < \tau_{n+1}:= T\right\}$ of the underlying renewal process on the set $\left\{ J_T  = n\right\}$. This is done in a completely analogous manner as presented in the subsection \ref{tree:structure:ibp}. 

We can now restate Theorem \ref{prop:5.2} as follows. For any function $ f\in \mathscr{C}^1_b(\mathbb{R})$ satisfying $ f(L)=0$,
	\begin{align*}
	{T} 	\E[f'(X_{T})\I_\seq{\tau\geq T}]  =&
	\E\left [\sum_{k=1}^{J_T+1} (\tau_k - \tau_{k-1})
	\left\{f(		
	\bar{X}_{J_T+1})
	\ltheta^{I^{J_T+1}_k}+\sum_{\mathbf{s}\in \bar{S}^k_{J_T+1}\cup \dot{S}^k_{J_T+1}}f(\bar{X}^{\mathbf{s}}_{J_T+1})
	\ltheta^{\mathbf{s}}\right\}
	\right].
	\end{align*}
	Note also that the corresponding other formulation of Remark \ref{other:formulation} also holds. Moreover, if $J$ is a renewal process with $[0,\bar{\tau}]$-valued $Beta(1-\alpha, 1)$ jump times, $\bar \tau >T$, with $\alpha$ satisfying $p(\frac12-\alpha) < 1-\alpha$ or if $J$ is a renewal process with $[0,\bar{\tau}]$-valued $Beta(1-\alpha, 1-\beta)$ jump times, $\bar \tau >T$, with $\alpha$ and $\beta$ such that $p(\frac12-\alpha) < 1-\alpha$ and $\beta \in (0,1)$, then the r.v. appearing inside the expectation of the right-hand side of the above equality belongs to $\mathbb{L}^p(\mathbb{P} ) $ for any $p\geq 1$. The proof follows similar lines of reasonings as those employed above in order to deal with the new probabilistic representation \eqref{eq:PR:modified} and is thus omitted.

We proceed similarly for the BEL formula of Theorem \ref{thm:forward:ibp}. Namely, we redefine the weights $\rtheta^{a}$, for $a\in \left\{e, c,\partial\right\}$ as follows
\begin{align*}
\nonumber
		 \rtheta^e_{i}:=&
		2 (f(\tau_{i}-\tau_{i-1}))^{-1} \left ({\mathcal{I}}_{i}^2(d_2^{i})
		+{\mathcal{I}}_{i}(d^{i}_1)
		\right ),
		\\
		{ \rtheta}_{i}^c:=& 
		{{\mathcal{I}}}_{i}\left(
		(2\rho_{i}-1)\bar \theta_{i} - \rtheta^e_{i}
		\right) 
		+ \partial_{\bar{X}_{i-1}}
		 \bar \theta_{i}
		+
		\sigma'_i{{\mathcal{I}}}_{i}\left(Z_{i}
		\bar \theta_{i}
		\right) ,\\
			\nonumber
		{{\rtheta}}_{i}^\partial:=&
		\rtheta^e_{i}
\end{align*}
	and also set ${\rtheta}^e_{n+1}:=2 (1-F(T-\zeta_n))^{-1}(1+(2\rho_{n+1}-1)\sigma'_n Z_{n+1})$, $\rtheta^{c}_{n+1} :=0$. These new weights satisfy the time degeneracy estimate \eqref{modification:degeneracy:estimate}. Then, the following BEL formula is satisfied for any $f\in \mathcal{C}^{1}_b(\rr)$ satisfying $f(L)=0$ and any initial point $x\in [L,\infty)$:
		\begin{align*}
		{T} \partial_x	\E[f(X_{T})\I_\seq{\tau\geq T}]  =&
		\E\left [f(\bar{X}_{J_T+1})\sum_{k=1}^{J_T+1} (\zeta_k - \zeta_{k-1})\left\{ \rtheta^{I^{J_T+1}_k}+\sum_{\mathbf{s}\in \widehat{S}^k_{J_T+1}} \rtheta^{\mathbf{s}}\right\}\right].
		\end{align*}
		Moreover, the r.v. appearing inside the expectation in the right-hand side of the above equality belongs to $\mathbb{L}^p(\mathbb{P} ) $, for any $ p\geq1$ in the case of $[0,\bar{\tau}]$-valued $Beta(1-\alpha, 1)$ jump times or $[0,\bar{\tau}]$-valued $Beta(1-\alpha, 1-\beta)$ jump times under the condition $p(\frac12-\alpha) < 1-\alpha$ and $\beta \in (0,1)$. In particular, choosing $\alpha=1/2$, the $L^{p}(\mathbb{P})$ moment is finite for any $p\geq1$.

\section{Numerical tests}
\label{sec:7}
In this section, we provide some numerical results for the unbiased Monte Carlo simulation method based on the probabilistic representation formula established in Theorem \ref{th:3.2a} for the marginal law of the killed process and the Bismut-Elworthy-Li (BEL for short) formula of Theorem \ref{thm:forward:ibp}. A similar numerical analysis could be done for the IBP formula established in Theorem \ref{prop:5.2} but we restrict to the two aforementioned case for sake of simplicity. For a one dimensional Brownian motion $W$, we thus consider the following one-dimensional SDE with dynamics
\begin{equation}
\label{sde:dynamics:test}
X_t = x_0 + \int_0^{t} b(X_s) ds + \int_{0}^t \sigma(X_s) dW_s, \, x\in \rr
\end{equation}
\noindent and we choose the coefficients $b$, $\sigma$ and the test function $f$ as follows
$$
\sigma(x)= \bar \sigma \times (\sin(\omega x) +2), \quad b(x) = -\frac{x}{x^2  + \frac{c_1}{3 c_3}} \sigma(x), \quad f(x) = c_3 x^3  + c_1 x + c_0  
$$
\noindent for some positive constant $\bar \sigma >0$. With this particular choice, we first observe that assumption \textbf{(H)} is clearly satisfied and that a direct computation yields $\mathcal{L}f(x) := b(x) f'(x) + \frac12 \sigma^2(x) f''(x) = 0$. The process $(f(X_t))_{t\geq0}$ is thus a martingale and by Doob's stopping theorem one gets $\E[f(X_{\tau \wedge T})] = f(x_0)$ where $\tau = \inf\left\{ t\geq0: X_t \leq L\right\}$. We now shift the function $f$ by considering $h(x)= f(x) -f(L)$ satisfying $h(L)=0$ instead of $f$. It is readily seen that $h$ satisfies $\mathcal{L} h(x) = 0$ so that $\E[h(X_{\tau \wedge T})] = \E[h(X_T) \I_\seq{\tau >T}] = h(x_0)$. Note that since $h$ is not bounded but of polynomial growth, only Theorem \ref{th:3.2a} directly applies. However, the extension of Theorem \ref{thm:forward:ibp} (and also of Theorem \ref{prop:5.2}) to polynomially growing function can be performed by a standard approximation argument noting that all moments of $\bar{X}_{N_T+1}$ and $X_T$ are bounded.

Having this extension in mind, by Theorem \ref{thm:forward:ibp}, one has 
\begin{align*}
\forall x \geq L, \quad T h'(x) = \partial_x \E[h(X_T) \I_\seq{\tau >T}] & =\E\left [h(\bar{X}_{N_T+1})\sum_{k=1}^{N_T+1} (\zeta_k - \zeta_{k-1})\left\{ \rtheta^{I^{N_T+1}_k}+\sum_{\mathbf{s}\in \widehat{S}^k_{N_T+1}} \rtheta^{\mathbf{s}}\right\}\right].
\end{align*}

We select the following parameters: $T=0.5$, $L=0$, $c_0 = 0$, $c_1 = 1$, $c_3 = 1$ and $x_0=1$ so that $h(x_0) = 2$ and $T h'(x_0) = 2$. We use three different parameters sets for $\bar \sigma = \omega  = 0.1, \, 0.2, \, 0.3$. We examine the performance of the proposed Monte Carlo estimator with respect to the previous sets of parameters when one uses the Exponential sampling (the distribution of the jump times is exponential with parameter $\lambda$) as it is written in Theorem \ref{th:3.2a} and Theorem \ref{thm:forward:ibp} and when one uses an importance sampling technique with Beta distribution with parameters $(\gamma, \bar \tau)$ for the jump times of the renewal process, as exposed in Section \ref{importance:sampling:section}, which allows to achieve finite variance for our estimators. We note that though the variance is not finite in the case of Exponential time sampling, we include it here in order to compare its performance with the Beta sampling scheme. In both cases, we first select the optimal parameters which minimize the variance of the estimator using few samples, that is, the optimal $\lambda$ in the case of Exponential sampling and the optimal $(\gamma, \bar \tau)$ in the case of Beta sampling. We then use $M=4 \times 10^6$ i.i.d. samples to estimate the considered quantities. The results are summarized in the two tables below. The first column of Table \ref{tab:table1} and Table \ref{tab:table2} provides the value of the two parameters $\bar \sigma = \omega$. The second column (resp. third column) of Table \ref{tab:table1} provides the estimated value of the quantity $\E[h(X_T) \I_\seq{\tau >T}]$ with its associated variance, $L^1(\P)$-error and $95\%$-confidence interval in the case of Exponential sampling  (resp. Beta sampling). The second column (resp. third column) of Table \ref{tab:table2} provides the estimated value of the quantity $\partial_x \E[h(X_T) \I_\seq{\tau >T}]$ with its associated variance, $L^1(\P)$-error and $95\%$-confidence interval in the case of Exponential sampling  (resp. Beta sampling).

\begin{table}[h!]
	\begin{center}
		\begin{tabular}{ | l | c | c | }
			\hline
			$\boldsymbol{\bar \sigma}\bold{=}\boldsymbol{\omega}$ & \textbf{Exponential sampling} & \textbf{Beta sampling}\\
			\hline
			0.1 & 2.0; 26.3; 3.2; (+/-) 0.005  &  2.0; 14.9; 2.9; (+/-) 0.004 \\ \hline
			0.2 & 1.99; 213.2; 4.7; (+/-) 0.014 &  1.99; 77.2; 4.5; (+/-) 0.009 \\ \hline
			0.3 & 2.0; 3064.1; 7.8; (+/-) 0.054 &  1.98; 681.2; 7.6 ;(+/-) 0.025 \\ \hline
		\end{tabular}
		\caption{Unbiased Monte Carlo estimation for the quantity $\E[h(X_T) \I_\seq{\tau >T}]$ based on Theorem \ref{th:3.2a} by Exponential and Beta sampling with its associated $95\%$-confidence interval.}
		\label{tab:table1}
	\end{center}
\end{table}


\begin{table}[h!]
	\begin{center}
		\begin{tabular}{ | l | c | c | }
			\hline
			$\boldsymbol{\bar \sigma}\bold{=}\boldsymbol{\omega}$ & \textbf{Exponential sampling} & \textbf{Beta sampling}\\
			\hline
			0.1 &  1.99; 379.7;  8.4 ; (+/-)  0.019 & 2.00; 295.0; 8.4 ;(+/-) 0.017 \\ \hline
			0.2 & 1.98; 1008.7; 7.1; (+/-) 0.035 &1.98; 467.7; 7.1 ;(+/-) 0.021\\ \hline
			0.3 & 1.97; 5411.8; 8.9; (+/-) 0.072 & 1.97; 2358.4;  8.7 ;(+/-) 0.047 \\ \hline
		\end{tabular}
		\caption{Unbiased Monte Carlo estimation for the quantity $\partial_x \E[h(X_T) \I_\seq{\tau >T}]$ based on Theorem \ref{thm:forward:ibp} by Exponential and Beta sampling with its associated variance and $95\%$-confidence interval.}
		\label{tab:table2}
	\end{center}
\end{table}

Most notably, we observe that in both tables the performance of our estimators quickly deteriorates as $\bar \sigma=\omega$ increases. Actually, for large values of $\bar \sigma$, $\omega$, say greater than $0.4$, the variance becomes difficult to estimate from the simulations and the obtained estimates become unreliable. We also see that the Beta time sampling method outperforms the Exponential time sampling for all values of the considered parameters especially for large values of $\bar \sigma$. This behavior was already observed in \cite{APKA} and is reminiscent of unbiased simulation methods for multidimensional diffusion processes. It was thus expected here since there is no hope that our estimator will overcome this problem. To circumvent this issue, one may resort to more sophisticated method such as the second order approximation method developed by \cite{AKY}.

\section{Some Conclusions}
In the present work, we presented a probabilistic representation formula for the marginal law of a killed process based on a basic Markov chain which is obtained using the reflection principle. From this representation, we established two IBP formulae, one being of BEL's type, from which directly stem an unbiased Monte Carlo method. The main element used in this construction is a suitable tailor-made Malliavin calculus for the  underlying Markov chain. For this reason, we do not need to use the full fledge power of Malliavin calculus by closing the derivative operator but just the concepts for simple discrete time Markov chain.

The methodology developed here seems to follow a general pattern that could be used to obtain IBP formulae for some other irregular functionals of the Wiener process for which boundary problems may appear such as the exit time, the local time, the running maximum or the occupation time of a multi-dimensional diffusion process. 
 Although the problem investigated here focuses in the one dimensional case, we believe that the approach developed here also extends to some multi-dimensional cases for which the reflection principle is well understood and densities for basic approximation processes are known, see e.g. \cite{AI}, \cite{Defo} and the references therein. 

On the other hand, it seems difficult at this moment to generalize the methods in \cite{Jenkins} and \cite{Herrmann} to the multi-dimensional case or even to obtain an amenable integration by parts formula based on a Markov chain using these formulations or a Lamperti like transform. We will discuss this extension in future works as well as their implementation for simulation purposes.





\bibliographystyle{abbrv}
\bibliography{biblio}

\section{Appendix}
\label{app:sec}

%
%
%
%
%
%

\subsection{Proof of the probabilistic representation in Theorem \ref{th:3.2a}}
Let $ X $ be the solution to
\eqref{sde:dynamics}. Let $ P $ denote the semigroup operator associated with the killed process. That is, for a measurable and bounded function $ f $, one defines $ P_t f(x)=\E\left[f(X_t)\I_\seq{\tau >t}\right] $. We remark that due to the indicator function, this semigroup is not conservative. 

The heuristic argument in order to obtain the probabilistic representation is to use It\^o's formula on an approximation process obtained from \eqref{sde:dynamics} by removing the drift and freezing the diffusion coefficient at the starting point. From It\^o's formula, one obtains a one step expansion of the law of $X$ around the law of the Markov chain $\bar X$. Then, an IBP formula based on the Markov chain $ \bar{X}$ has to be used to obtain the probabilistic representation of this first step expansion using one jump of the Poisson process $N$. Then, one just needs to iterate the first step expansion in order to obtain the full probabilistic representation.

In order to do this rigorously, one needs to use the regularity properties of the semigroup $ P $ which can be found in \cite{Men1}, {Chapter VI} and/or \cite{Men2}.
In particular, under assumption $ \mathbf{(H)} $ on the coefficients, one obtains that if $ f $ is a smooth function such that $ \lim_{x\downarrow L}f(x)=f(L)=0 $ then $ Pf \in \mathscr{C}^{1,2}((0,T]\times [L,\infty))$ and it satisfies $ \partial_t P_t f={\mathcal{L}} P_t f$ on $ [L,\infty) $, $ t>0 $, where $ {\mathcal{L}} =\frac 12 a\partial_x^2+b\partial_x$ is the infinitesimal generator of $ X $. Moreover, one has $\sup_{0\leq t \leq T} |\partial^{\ell}_x P_t f|_\infty \leq C$, for $\ell=1, 2$, for some positive constant $C:=C(T, a, b)$. 

Under the condition that the test function $ f $ vanishes at $L$, we obtain that $ P $ satisfies its corresponding Dirichlet boundary condition $ P_tf(L)=f(L)=0 $ together with $ P_0f(x)=f (x)$ for all $ x\geq L $.

The argument used to obtain the probabilistic representation starts by applying It\^o's formula to $(P_{T-t} f(\bar{Y}_{t\wedge\bar\tau}))_{t\in [0,T]}$ where the process $\bar Y $ is defined in Lemma \ref{lem:1} on the interval $[0,T]$ and $ \bar{\tau} $ is its associated exit time. We also recall the definition of the approximation process obtained from the reflection principle of Lemma \ref{lem:1}, namely $ \bar{X}^{s, x}_t = \rho x+(1-\rho)(2L-x)+\sigma(x)(W_t-W_s)$ (with the shorten notation $\bar{X}_t= \bar{X}^{0,x}_t$):
\begin{align*}
f(\bar{Y}_T)\I_\seq{\bar{\tau}>T} & \stackrel{\E}{=} P_{T}f(x) +\int_0^T \left(-\partial_uP_{u}f(\bar{Y}_s)\Big|_{u=T-s} +\frac{1}{2}a(x)\partial_x^2P_{T-s} f(\bar{Y}_s)\right)\I_\seq{\bar{\tau}>s}ds\\
& \stackrel{\E}{=} P_{T}f(x) +\int_0^T \left(\frac{1}{2}\left(a(x)-a(\bar{Y}_s)\right)\partial_x^2P_{T-s} f(\bar{Y}_s)-b(\bar{Y}_s)\partial_xP_{T-s}f(\bar{Y}_s)\right)\I_\seq{
	\bar{\tau}>s}ds \\
	& \stackrel{\E}{=} P_{T}f(x) + 2(2\rho-1)\int_0^T \left(\frac{1}{2}\left(a(x)-a(\bar{X}_s)\right)\partial_x^2P_{T-s} f(\bar{X}_s)-b(\bar{X}_s)\partial_xP_{T-s}f(\bar{X}_s)\right)\I_\seq{\bar{X}_s\geq L}ds.
\end{align*}

We now rewrite the previous representation using the Markov chain $(\bar{X}_i)_{0\leq i \leq N_T+1}$ defined by \eqref{eq:MCa} together with the Poisson process $N$. From the previous identity, we get
\begin{align}
P_T f(x) & \stackrel{\E}{=}  f(\bar{X}_T) 2(2\rho-1) \nonumber \\
& + 2(2\rho-1)\int_0^T \left(\frac{1}{2}\left(a(\bar{X}_s) -a(x)\right)\partial_x^2P_{T-s} f(\bar{X}_s) +b(\bar{X}_s)\partial_xP_{T-s}f(\bar{X}_s)\right)\I_\seq{\bar{X}_s\geq L}ds \label{integral:time:first:step} \\
& \stackrel{\E}{=} f(\bar{X}_{N_T+1}) \theta_{N_T+1} \I_\seq{N_T=0} \nonumber \\
&  + e^{\lambda T} 2 \lambda^{-1} (2\rho_{N_T}-1)\left\{\frac12(a(\bar{X}_1) - a(x)) \partial^2_x P_{T- \zeta_1}f(\bar{X}_1) + b(\bar{X}_1) \partial_x P_{T-\zeta_1}f(\bar{X}_1) \right\} \I_\seq{\bar{X}_1 \geq L} \I_\seq{N_T=1} \nonumber
\end{align}


Next, we apply the IBP formula \eqref{eq:IBP} with respect to the r.v. $ \bar{X}_1$ in the
above expression. The formula is applied once to the terms associated with the drift coefficient $ b $ and two times with respect to the terms related to the diffusion coefficient $ a$. In order to do that one first has to take the conditional expectation $\E_{0,1}[.]$ in the second term of the above equality. As these IBPs involve the indicator function $ \I_\seq{\bar{X}_1\geq L} $, the correct calculation has to be done using the theory in \cite{IW}, Chapter V.9.\footnote{An alternative but longer approach can also be achieved using smooth approximations for the indicator function.} This yields
\begin{align*}
P_{T}f(x)
\stackrel{\E}{=}& f(\bar{X}_{N_T+1}) \theta_{N_T+1} \I_\seq{N_T=0}
+ e^{\lambda T} 2\lambda^{-1}(2\rho_{N_T}-1)
\\&\times \left(\frac{1}{2}{{\mathcal{I}}}_{1}\left(\left(a(\bar{X}_1) - a(x)\right)
\I_\seq{ \bar{X}_1\geq L}\right)\partial_xP_{T-\zeta_1}f(\bar{X}_1) + {{\mathcal{I}}}_{1}\left(b(\bar{X}_{1})\I_\seq{ \bar{X}_1\geq L}\right) P_{T-\zeta_1}f(\bar{X}_{1})\right) \I_\seq{N_T= 1} \\
	 \stackrel{\E}{=}& f(\bar{X}_{N_T+1}) \theta_{N_T+1} \I_\seq{N_T=0}+ e^{\lambda T} 2\lambda^{-1}(2\rho_{N_T}-1)\\
	& \times \left(\frac{1}{2}{{\mathcal{I}}}_{1}\left(\left(a(\bar{X}_1) - a(x)\right)
\right)\partial_xP_{T-\zeta_1}f(\bar{X}_1) + {{\mathcal{I}}}_{1}\left(b(\bar{X}_{1} ) \right) P_{T-\zeta_1}f(\bar{X}_{1})\right) \I_\seq{ \bar{X}_1\geq L} \I_\seq{N_T= 1}
\end{align*}

\noindent where we used the extraction formula \eqref{eq:exta} applied to the r.v. $ \I_\seq{\bar{X}_s\geq L} $ and Lemma \ref{lem:2.1} (taking first the conditional expectation w.r.t $\zeta_1$) for $ \ell=1,\ k=0 $ for the last equality. 

Now we carry out again the same procedure for the second term in the above integrand using again the extraction formula  \eqref{eq:exta} and the fact that $ P_{t}f(L)=f(L)=0 $. We obtain
\begin{align}
    & P_{T-t}f(\bar{Y}_t)\I_\seq{\bar{\tau}>t}\nonumber \\
	\stackrel{\E}{=}& f(\bar{X}_{N_T+1}) \theta_{N_T+1} \I_\seq{N_T=0} + e^{\lambda T} 2\lambda^{-1}(2\rho_{N_T}-1)\nonumber\\
	& \times \left(\frac{1}{2}\mathcal{I}^2_{1} \left(a(\bar{X}_1) - a(x)\right) P_{T-\zeta_1}f(\bar{X}_1) + {{\mathcal{I}}}_{1}\left(b(\bar{X}_{1} ) \right) P_{T-\zeta_1}f(\bar{X}_{1})\right) \I_\seq{ \bar{X}_1\geq L} \I_\seq{N_T= 1} \label{prob:representation:step1}
\end{align}

We emphasize that, under assumption \A{H}, the following estimates hold: for all $p\geq1$, there exists $C:=C(a, b, T, p)$ such that
\begin{align}
\label{eq:est2}
\|\I_\seq{\bar{X}_1\geq L}\mathcal{I}_{1}(a(\bar{X}_1)-a(x))\|_{p, 0, 1} & \leq C,\\
\|\I_\seq{\bar{X}_1\geq L}\mathcal{I}^2_{1}(a(\bar{X}_1)-a(x))\|_{p, 0, 1} +\|\I_\seq{\bar{X}_1\geq L}\mathcal{I}_{1}(b(\bar{X}_{1}))\|_{p, 0, 1}& \leq \zeta_1^{-1/2} \nonumber
\end{align}

\noindent which in turn, by using the fact that on $\P(N_T=1, \zeta_1 \in dt) = \lambda e^{-\lambda T} dt$ on $[0,T]$, lead to the integrability of the second term appearing in \eqref{prob:representation:step1}. 

We now prove the three estimates \eqref{eq:est2}. Since they are obtained using the same technique, we only briefly explain how to obtain the first inequality. Using the definition of $ \mathcal{I}_{1}$, one has that 
\begin{align*}
\mathcal{I}_{1}(a(\bar{X}_1) -a (x))= (a(\bar{X}_1) -a(x)) \frac{Z_1}{\sigma(x)\zeta_1} + a'(\bar{X}_1).
\end{align*}
Therefore in order to bound the above expression, we need to find an upper bound for $ |a(\bar{X}_1)-a(x)| $ and then 
use classical estimates for the moments of Gaussian r.v.'s. In order to bound $ |a(\bar{X}_1)-a(x)| $, one uses the Lipschitz property of $ a $ as follows
\begin{align*}
\I_\seq{\bar{X}_1\geq L}|a(\bar{X}_1) -a(x)|\leq C\I_\seq{\bar{X}_1\geq L} |\bar{X}_1-x|\leq C|\bar{X}_1-x\rho_1-(1-\rho_1)(2L-x)|\leq C|Z_1|.
\end{align*}
Here we have used the fact that $ \bar{X}_1\geq L $ and $ x\geq L $ implies $|\bar{X}_1-x|\leq  |\bar{X}_1-x\rho-(1-\rho)(2L-x)|$. Finally the estimate \eqref{eq:est2} follows from the application of Lemma \ref{lem:6} with $H_1= a(\bar{X}_1) - a(x)$.

With the notations introduced in Section \ref{sec:3a}, \eqref{set:Din:X} and \eqref{eq:13aa}, the identity \eqref{prob:representation:step1} can be rewritten as
 \begin{align}
 P_Tf(x) & = \E\left[ f(\bar{X}_{N_T+1}) \I_{D_{N_T+1,N_T}} \bar{\theta}_{N_T+1} \I_\seq{N_T=0}\right]+ e^{\lambda T} \E\left[P_{T-\zeta_1}f(\bar{X}_{1}) \I_{D_{N_T,N_T}} \bar{\theta}_1\I_\seq{N_T=1}\right]. \label{first:step:probabilistic:representation}
 \end{align}

Using again \eqref{integral:time:first:step}, applying Lemma \ref{lem:cj} and by finally performing IBPs as before, we obtain by induction the following formula, $\forall n \in \N, \quad$ .
\begin{align}
P_Tf(x)= \sum_{j=0}^{n-1} \E\left[ f(\bar{X}_{N_T+1})\prod_{i=1}^{N_T+1}\I_{D_{i, N_T}} \bar {\theta}_i  \I_\seq{N_T=j}\right ] + e^{\lambda T} \E\left[ P_{T-\zeta_{n}}f(\bar{X}_{n})\prod_{i=1}^{n} \I_{D_{i, N_T}}\bar {\theta}_i \I_\seq{N_T=n}\right ].\label{induction:step:probabilistic:representation}
 \end{align} 
 To conclude it remains to prove the absolute convergence of the first sum and the convergence to zero of the last term\footnote{An analytical proof of this fact can be found in \cite{FKL1}, Lemma 5.2.}. These two results follow directly from the boundedness of $f$ and the following general estimates on the product of weights.

\begin{lem} 
	\label{lem:7}
	The r.v.'s $\bar{\theta}_i \in \mathbb{S}_{i,n}$, $ i\in\mathbb{N}_{n+1} $ satisfy the time degeneracy estimates in Definition \ref{def:td}. More precisely, for any $p\geq1$, there exists a (deterministic) constant $ C>0 $ (which may depend on $ \lambda $) such that for $ i\in\mathbb{N}_n $
	\begin{align}
	\label{eq:14}
	 \I_{D_{i-1,n}}\E_{i-1,n}\left[\I_{D_{i,n}}|\bar{\theta}_i |^p\right]\leq C(\zeta_{i}-\zeta_{i-1})^{-\frac{p}{2}}.
	\end{align}	
	Furthermore, one has $ \I_{D_{n-1,n}}\E_{n,n}\left[\I_{D_{n,n}}|\bar{\theta}_{n+1}|^p\right ]\leq C $.
\end{lem}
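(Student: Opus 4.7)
The plan is to split the analysis according to whether $i \in \N_n$ (an interior jump time) or $i = n+1$ (the last time interval), since the definition of $\bar\theta_i$ in \eqref{eq:13aa} takes two genuinely different forms in these two regimes.

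For $i = n+1$, on the event $\{N_T = n\}$ only the second indicator $\I_\seq{N_T = i-1}$ in \eqref{eq:13aa} is active, so $\bar\theta_{n+1} = 2 e^{\lambda T}(2\rho_{n+1} - 1)$, which is deterministically bounded by $2 e^{\lambda T}$. The estimate on the last interval is then immediate from $|\bar\theta_{n+1}|^p \leq (2 e^{\lambda T})^p$ and a trivial conditional expectation. This handles the second assertion of the lemma.

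For $i \in \N_n$, only the first indicator in \eqref{eq:13aa} is active, so
\begin{align*}
\bar\theta_i = 2(2\rho_i - 1)\lambda^{-1}\bigl({\mathcal{I}}_i(c^i_1) + {\mathcal{I}}^2_i(c^i_2)\bigr).
\end{align*}
Since $|2(2\rho_i-1)\lambda^{-1}| \leq 2/\lambda$ is a deterministic constant, the task reduces to showing, uniformly in $(p, i, n)$, that both $\I_{D_{i,n}}{\mathcal{I}}_i(c^i_1)$ and $\I_{D_{i,n}}{\mathcal{I}}^2_i(c^i_2)$ belong to $\mathbb{M}_{i,n}(\bar X, -1/2)$. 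Here I invoke Remark \ref{rem:3}(ii), which records exactly the needed regularity of the coefficients: under assumption $\mathbf{(H)}$, $b \in \mathscr{C}^\infty_b$ gives $\I_{D_{i,n}} c^i_1 \in \mathbb{M}_{i,n}(\bar X, 0)$, while the Lipschitz property of $a$ together with the key inequality $\I_{D_{i-1,n}}\I_{D_{i,n}}|\bar X_i - \bar X_{i-1}| \leq 3\sigma_{i-1}|Z_i|$ (which is immediate when $\rho_i = 1$ and, when $\rho_i = 0$, follows from the fact that $\bar X_i \geq L$ forces $\sigma_{i-1} Z_i \geq \bar X_{i-1} - L \geq 0$) yields $\I_{D_{i,n}} c^i_2 \in \mathbb{M}_{i,n}(\bar X, 1/2)$.

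The conclusion is then a direct application of Lemma \ref{lem:6}: since $\mathcal{D}_i c^i_1 = b'(\bar X_i)$ and $\mathcal{D}^k_i c^i_2 = \tfrac12 a^{(k)}(\bar X_i)$ for $k \geq 1$ are all bounded by $\mathbf{(H)}$, the hypothesis of the lemma is satisfied, and applying it with $(j, k) = (0, 1)$ to $c^i_1$ and with $(j, k) = (1, 2)$ to $c^i_2$ places both ${\mathcal{I}}_i(c^i_1)$ and ${\mathcal{I}}^2_i(c^i_2)$ in $\mathbb{M}_{i,n}(\bar X, -1/2)$. Translating this back through the definition of $\mathbb{M}_{i,n}(\bar X, -1/2)$ and raising to the $p$-th power produces $\I_{D_{i-1,n}} \mathbb{E}_{i-1,n}[\I_{D_{i,n}}|\bar\theta_i|^p] \leq C (\zeta_i - \zeta_{i-1})^{-p/2}$, as desired. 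The only point needing a little care is the uniformity of the constant $C$ in $(p, i, n)$, which is guaranteed by the uniform boundedness built into $\mathscr{C}^\infty_b$ in $\mathbf{(H)}$ and by the fact that the estimate \eqref{eq:Hesta} behind Lemma \ref{lem:6} has explicit constants depending only on $\ell$ and $p$; this is the only mildly delicate bookkeeping point, but no genuine obstacle.
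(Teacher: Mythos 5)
Your proof is correct and follows essentially the same route as the paper: the last interval is handled by the deterministic bound $|\bar\theta_{n+1}|\leq 2e^{\lambda T}$, and for $i\in\mathbb{N}_n$ the estimate is reduced, via the Lipschitz property of $a$ and the reflection inequality $\I_{D_{i-1,n}}\I_{D_{i,n}}|\bar X_i-\bar X_{i-1}|\leq C\sigma_{i-1}|Z_i|$ (the paper's argument for \eqref{eq:est2}, which in fact gives constant $1$), to the memberships $\I_{D_{i,n}}c^i_1\in\mathbb{M}_{i,n}(\bar X,0)$ and $\I_{D_{i,n}}c^i_2\in\mathbb{M}_{i,n}(\bar X,1/2)$ recorded in Remark \ref{rem:3}(ii), followed by an application of Lemma \ref{lem:6}. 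You have merely written out in detail what the paper compresses into a reference to Lemma \ref{lem:6} and the proof of \eqref{eq:est2}.
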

\begin{proof}
	The proof for $ i\leq N_T $ follows from Lemma \ref{lem:6} after noting that $c^{i}_2 = \frac12 (a(\bar{X}_i) - a(\bar{X}_{i-1})) \in \mathbb{M}_i(\bar{X},1/2) \cap \mathbb{S}_i(\bar{X})$ and is based on the same arguments as in the proof of \eqref{eq:est2}. In the case $ i=N_T+1 $ the conclusion follows directly from the boundedness of the r.v. $\bar\theta_{N_T+1}$. This argument will be used repeatedly in order to obtain the so-called time degeneracy estimates.	
\end{proof}

\begin{lem}\label{lem:4.1} Assume that the weights $ (\bar{\theta}_i,\mathbb{S}_{i,n}) $, $ i\in\mathbb{N}_{n+1} $ satisfy the time degeneracy estimates in Definition \ref{def:td}. Then for any $ p\in [0,2)  $, we have the following moment estimate:
	\begin{align*}
	\E\left[\Big|\prod_{i=1}^{N_T+1}\I_{D_{i,N_T}}{\bar\theta}_i \Big|^p\right]\leq
	E_{1-\frac{p}2,1}(CT^{-\frac{p}2+1})<\infty.
	\end{align*} 
	Here $ E_{1-\frac{p}2,1} $ stands for the Mittag-Leffler function $E_{\alpha, \beta}(z) := \sum_{n\geq 0} \frac{z^k}{\Gamma(\beta+ k\alpha)}$ with parameters $ \alpha= 1-\frac{p}2, \, \beta=1$. 
\end{lem}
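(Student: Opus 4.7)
\textbf{Proof proposal for Lemma \ref{lem:4.1}.} The plan is to condition first on $\{N_T=n\}$ and on the jump times $T^{n+1}$ and signs $\rho^{n+1}$, then iterate the tower property backward from $i=n+1$ down to $i=1$, using at each step the time degeneracy estimate of Definition \ref{def:td}. Since $\bar\theta_i\in\mathbb{S}_{i,n}(\bar X)$ is $\mathcal{G}_i$-measurable (once $T^{n+1},\rho^{n+1}$ are fixed), the indicators and weights with index $\leq i-1$ pull out of the conditional expectation $\E_{i-1,n}[\cdot]$. Applying the estimate for $i=n+1$ first yields the factor $C$, and then applying the estimate for $i=n, n-1,\dots,1$ successively produces the factors $C(\zeta_i-\zeta_{i-1})^{-p/2}$. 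The result of this iteration, valid on $\{N_T=n\}$, is
\begin{equation*}
\E\Big[\prod_{i=1}^{n+1}\I_{D_{i,n}}|\bar\theta_i|^p\,\Big|\,T^{n+1},\rho^{n+1},N_T=n\Big]\leq C^{n+1}\prod_{i=1}^{n}(\zeta_i-\zeta_{i-1})^{-p/2}.
\end{equation*}

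Next I would average over the jump times. Conditional on $\{N_T=n\}$, $(\zeta_1,\dots,\zeta_n)$ is distributed as the order statistics of $n$ i.i.d.\ uniform random variables on $[0,T]$, so the conditional density on $A_n$ equals $n!/T^n$. Combined with $\P(N_T=n)=e^{-\lambda T}(\lambda T)^n/n!$, we obtain
\begin{equation*}
\E\Big[\Big|\prod_{i=1}^{N_T+1}\I_{D_{i,N_T}}\bar\theta_i\Big|^p\I_{\{N_T=n\}}\Big]\leq e^{-\lambda T}\,C^{n+1}\,\frac{\lambda^n}{T^{-n}}\cdot\frac{1}{T^n}\int_{A_n}\prod_{i=1}^n(t_i-t_{i-1})^{-p/2}\,dt_1\cdots dt_n,
\end{equation*}
with $t_0=0$. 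The change of variables $u_i=t_i-t_{i-1}$ reduces this to a Dirichlet integral on the simplex $\{u_i>0,\ \sum_i u_i\leq T\}$, which evaluates to $T^{n(1-p/2)}\Gamma(1-p/2)^n/\Gamma(1+n(1-p/2))$. Crucially, this integral is finite precisely when $1-p/2>0$, i.e.\ $p<2$, which is where the restriction on $p$ enters.

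Finally, I would sum over $n\geq 0$. After absorbing all numerical constants (including $\lambda$ and $\Gamma(1-p/2)$) into a single constant $C$, the bound takes the form
\begin{equation*}
\E\Big[\Big|\prod_{i=1}^{N_T+1}\I_{D_{i,N_T}}\bar\theta_i\Big|^p\Big]\leq C\sum_{n\geq 0}\frac{(C T^{1-p/2})^n}{\Gamma(1+n(1-p/2))}=C\,E_{1-p/2,1}(CT^{1-p/2}),
\end{equation*}
which is the Mittag--Leffler series, entire in its argument whenever the first parameter is positive; this yields finiteness for every $p\in[0,2)$.

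The main technical point — though not really an obstacle — is verifying that the iterative conditioning is carried out in the correct order: one must ensure that the indicator $\I_{D_{i-1,n}}$ generated at one step of the iteration indeed multiplies the expression to which one applies the next time degeneracy estimate. This works because $D_{i-1,n}\in\mathcal{G}_{i-1}$ and the telescoping $\I_{D_{n+1,n}}\cdots\I_{D_{1,n}}=\prod_{i=1}^{n+1}\I_{D_{i,n}}$ can be peeled off one factor at a time inside successive conditional expectations $\E_{n,n},\E_{n-1,n},\dots,\E_{0,n}$. Everything else is just the Poisson/uniform order-statistics calculation plus the Dirichlet integral and recognition of the Mittag--Leffler function.
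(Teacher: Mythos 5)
Your proposal is correct and follows essentially the same route as the paper: backward iteration of the time degeneracy estimates via the Markov property to get the conditional bound $C^{n+1}\prod_{i=1}^n(\zeta_i-\zeta_{i-1})^{-p/2}$, then the order-statistics representation of the jump times, the Dirichlet integral over the simplex yielding $T^{n(1-p/2)}\Gamma(1-p/2)^n/\Gamma(1+n(1-p/2))$, and summation over $n$ into the Mittag--Leffler series. The only difference is cosmetic: the paper writes the joint law directly as $\lambda^n e^{-\lambda T}\,dt_1\cdots dt_n$ on $A_n$ rather than splitting it into $\P(N_T=n)$ times the conditional uniform-order-statistics density.
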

\begin{proof}
	For the proof, it is enough to use the Markov property of the Markov chain $ \bar{X} $ together with the time degeneracy estimates and the fact that given $ N_T=n $, the jump times of the Poisson process are distributed as the order statistics of $n$ i.i.d. uniform $ [0,T]$-valued r.v.'s satisfying $\P(N_T=n, \zeta_1\in dt_1, \cdots, \zeta_n \in dt_n)= \lambda^n e^{-\lambda T} dt_1, \cdots, dt_n$ on the set $A_n=\left\{ (t_1, \cdots, t_n) \in [0,T]^n: 0 < t_1 < \cdots <t_n <T \right\}$. This gives
\begin{align*}
\E\left[\Big|\prod_{i=1}^{N_T+1}\I_{D_{i,N_T}}\bar{\theta}_i\, \Big|^p \I_\seq{N_T=n}\right] &  \leq C^{n+1} \int_{A_n} \prod_{i=1}^{n} (t_{i}-t_{i-1})^{-\frac{p}{2}} \, dt_1 \cdots dt_n \\
& = C^{n+1} T^{n (1-\frac{p}{2})} \frac{\Gamma^{n}(1-\frac{p}{2})}{\Gamma(1+ n(1-\frac{p}{2}))}
\end{align*}

\noindent for some positive constant $C:=C(T, a, b, p, \lambda)$. One concludes the proof by adding the previous inequality from $n=0$ to infinity. 

\end{proof}

 As a consequence, \eqref{eq:PR} holds for bounded smooth functions. The extension to bounded continuous functions follows from an approximation argument while the extension to bounded measurable function follows from a monotone class argument. The extension to polynomially growing functions is performed by a limit argument noting that all moments of $ \bar{X}_{N_T+1}$ and $ X_T $ are bounded.
 
\begin{remark}
(i) From the above proof, it should be clear that inequality \eqref{eq:14} (or equivalently \eqref{eq:td}) is strongly tied with the restriction $ p\in [0,2) $ in Lemma \ref{lem:7}. This is the reason why it is important to always have weights which satisfy a time degeneracy estimate as stated in Definition \ref{def:td}.\hfill\break 
(ii) On the other hand, the estimate in \eqref{eq:14}, as remarked in \cite{BK}, also suggests that the variance of the Monte Carlo estimator may be infinite. In order to achieve finite variance, one can resort to an importance sampling technique on the jump times as proposed by Anderson and Kohatsu-Higa \cite{APKA}.
\end{remark}

\subsection{Proof of the transfer of derivatives of Lemma \ref{lem:5.1}}
\label{app:tl}

	First, we remark that from
	the explanation given after the definition of the operators $ \mathcal{D} $ and $ \mathcal{I} $ in \eqref{eq:I1}, the r.v.'s $(\ltheta^e_{i+1},\ltheta^c_{i+1},\ltheta^\partial_{i+1})$ defined by \eqref{eq:defDc} are elements of ${\mathbb{S}}_{i+1}(\bar{X})$. Then, it is clear that since the coefficients $b$ and $\sigma$ are smooth as stated in $ \mathbf{(H)} $ and the transition of the Markov chain $\bar{X}$ has a smooth Gaussian transition density, the map $x \mapsto \mathbb{E}_{i,n}[f(\bar X_{i+1}) \I_{D_{i+1,n}} \ltheta^e_{i+1}]\big|_{\bar{X}_i=x} $ is in $\mathscr{C}^{1}_p(\mathbb{R})$ a.s. 
	
	 In order to prove \eqref{eq:5.1a}, we consider the difference between the term appearing on the left-hand side and the first term appearing on the right-hand side of equation \eqref{eq:5.1a}. Using the integration by parts formula \eqref{eq:IBP} and then the extraction formula \eqref{eq:exta}, we get 
	$$
	\mathbb{E}_{i,n}[\partial_{\bar X_{i+1}}f(\bar X_{i+1}) \I_{D_{i+1,n}} \bar{\theta}_{i+1}]   = \E_{i,n}[f(\bar X_{i+1}) \I_{D_{i+1,n}} {\mathcal{I}}_{i+1}(\bar{\theta}_{i+1})] - \E_{i,n}[f(\bar{X}_{i+1}) \delta_L(\bar{X}_{i+1}) \bar{\theta}_{i+1}].
	$$
	From \eqref{eq:flow} and the integration by parts formula \eqref{eq:IBP}, we also obtain 
	\begin{align*}
	\partial_{\bar{X}_i}\E_{i,n}[f(\bar X_{i+1}) \I_{D_{i+1,n}}\ltheta^e_{i+1}]  = & \E_{i,n}[\partial_{\bar{X}_i} (f(\bar X_{i+1}) \I_{D_{i+1,n}}) \ltheta^e_{i+1}] + \E_{i,n}[f(\bar X_{i+1}) \I_{D_{i+1,n}}\partial_{\bar{X}_i} \ltheta^e_{i+1}] ,\\
	= &\E_{i,n}[f(\bar{X}_{i+1}) \I_{D_{i+1,n}} {\mathcal{I}}_{i+1}([(2\rho_{i+1}-1) + \sigma'_i Z_{i+1}] \ltheta^{e}_{i+1}) ] \\
	&+ \E_{i,n}[f(\bar X_{i+1}) \I_{D_{i+1,n}}\partial_{\bar{X}_i} \ltheta^e_{i+1}].
	\end{align*}
	By combining the two previous computations, we see that the difference
	$ \mathbb{E}_{i,n}[\partial_{\bar X_{i+1}}f(\bar X_{i+1})
	\I_{D_{i+1,n}}
	\bar{\theta}_{i+1}]   - \partial_{\bar{X}_i}\mathbb{E}_{i,n}[f(\bar X_{i+1}) 
	\I_{D_{i+1,n}}
	\ltheta^e_{i+1}]  $ can be expressed as
	\begin{align}
	\nonumber
	&	\mathbb{E}_{i,n}[f(\bar X_{i+1})\I_{D_{i+1,n}} {{\mathcal{I}}}_{i+1}(\bar \theta_{i+1}-(2\rho_{i+1}-1)\ltheta^e_{i+1})]   -\mathbb{E}_{i,n}[f(\bar X_{i+1}) \I_{D_{i+1,n}}\partial_{\bar{X}_i}\ltheta^e_{i+1}]\\
	&
	-
	\mathbb{E}_{i,n}[f(\bar X_{i+1}) \I_{D_{i+1,n}}{{\mathcal{I}}}_{i+1}(
	\sigma'_iZ_{i+1}
	\ltheta^e_{i+1}
	)]
	-\mathbb{E}_{i,n}[f(\bar X_{i+1})\delta_L(\bar X_{i+1}) 
	\bar{\theta}_{i+1}] 		
	.\label{eq:firb}
	\end{align}
	
	Note that by using the relation $ {d_1^{i+1}}= c^{i+1}_1-(2\rho_{i+1}-1)\partial_{\bar{X}_i}c^{i+1}_2$ in \eqref{d1} and Lemma \ref{chain:rule} we obtain
	\begin{align*}
	\partial_{\bar{X}_i}\ltheta^e_{i+1}{=}&
	2\lambda^{-1}\left ({\mathcal{I}}_{i+1}^2(\partial_{\bar{X}_i}c^{i+1}_2)+{\mathcal{I}}_{i+1}(\partial_{\bar{X}_i}{d_1^{i+1}})
	- (\sigma' \sigma^{-1})_i\left({\mathcal{I}}_{i+1}({d_1^{i+1}})
	+2{\mathcal{I}}_{i+1}^2(c_2^{i+1})\right)\right ),\\
	\partial_{\bar{X}_i}c^{i+1}_2=&a'_{i+1}\partial_{\bar{X}_i}\bar{X}_{i+1}-a'_i,\\
	\partial_{\bar{X}_i}^2c_2^{i+1}=&a''_{i+1}(\partial_{\bar{X}_i}\bar{X}_{i+1})^2+a'_{i+1}\partial^2_{\bar{X}_i}\bar{X}_{i+1}-a''_i,\\
	\partial_{\bar{X}_i}{d_1^{i+1}}=&b'_{i+1}\partial_{\bar{X}_i}\bar{X}_{i+1}-(2\rho_{i+1}-1)\partial_{\bar{X}_i}^2c_2^{i+1},
	\end{align*}
	
	\noindent which in turn, after some algebraic simplifications, yields the formula for $\ltheta^{c}_{i+1}$ appearing in \eqref{eq:defDc}. In order to conclude the proof of \eqref{eq:5.1a}, it remains to notice that for  $ f\in \mathscr{C}_p^1([L,\infty))$, one has
	\begin{align}
	-\E_{i,n}\left[f(\bar{X}_{i+1})\delta_L(\bar{X}_{i+1})\bar{\theta}_{i+1}\right]=&
	\E_{i,n}\left[f(\bar{X}_{i+1})\delta_L(\bar{X}_{i+1})	\ltheta^\partial_{i+1}\right]
	\label{eq:coeff}
	\end{align}
	which is a direct consequence of Corollary \ref{cor:2.1}. 
	
	The time degeneracy estimates of the r.v.'s $(\ltheta^e_{i+1},\ltheta^c_{i+1},\ltheta^\partial_{i+1}) $ are straightforward applications of Lemma \ref{lem:6} using the same arguments as in \eqref{eq:est2}. The proof of the transfer of derivative formula \eqref{eq:sdf} as well as the time degeneracy estimates of the r.v.'s for $\ltheta^{a}_{n+1}$, $a \in \left\{ e, c\right\}$ on the set $ \{N_T=n\} $ follow from similar arguments and we omit the remaining technical details. \qed

\subsection{Proof of the boundary merging lemmas}

\subsubsection{A time convolution result for  Gaussians}

In this section, we describe a series of explicit calculations of convolutions of Gaussian densities with respect to the time variable. These are used when merging weights and building the boundary process. 
\begin{lem}	\label{lem:7.1} 
	For 
 $x,y,\alpha,\beta\in\mathbb{R}_+$ and $\ell\in\{0,1,2\}$, 
	\begin{align}\label{eq:4.6}
	\int_{0}^{t}
	\partial_{x}g(\alpha^2s,x)
	\partial_{y}^\ell{( g(\beta^2(t-s),y)) } 
	ds
	=-\alpha^{-2}\partial_{y}^{\ell}(g(\beta^2t,
	y+\frac{\beta}{\alpha} x))
	.
	\end{align}
	Furthermore
	\begin{align}\label{eq:4.6a}
	\int_{0}^{t}
	s\partial_x g(\alpha^2s,x)
	\partial_{y}^\ell{(g(\beta^2(t-s),y)) } 
	ds
	=-\alpha^{-3}\beta^{-1}x\partial_{y}^{\ell}(\bar{\Phi}(\beta^2t,
	y+\frac{\beta}{\alpha} x))
	.
	\end{align}
where $\bar \Phi(t,z) := \int^\infty_{|z|} g(t,y)dy$.
\end{lem}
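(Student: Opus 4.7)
The plan is to pass to Laplace transforms in $t$ to prove \eqref{eq:4.6}, and then deduce \eqref{eq:4.6a} from \eqref{eq:4.6} by a direct integration in $x$. Both identities are time-convolutions of Gaussian densities, so Laplace methods are tailor-made here.

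For \eqref{eq:4.6} with $\ell = 0$, the key input will be the classical formula $\int_0^\infty e^{-\lambda s} g(\alpha^2 s, x)\, ds = (\alpha\sqrt{2\lambda})^{-1}\exp(-x\sqrt{2\lambda}/\alpha)$ for $x,\alpha > 0$ (obtained by the substitution $u = x^2/(2\alpha^2 s)$ reducing to a standard Bessel-type integral). Differentiating in $x$ gives $\int_0^\infty e^{-\lambda s}\partial_x g(\alpha^2 s,x)\,ds = -\alpha^{-2}\exp(-x\sqrt{2\lambda}/\alpha)$. Multiplying by the Laplace transform $(\beta\sqrt{2\lambda})^{-1}\exp(-y\sqrt{2\lambda}/\beta)$ of $u \mapsto g(\beta^2 u, y)$ and invoking the convolution theorem shows that the Laplace transform of the left-hand side of \eqref{eq:4.6} with $\ell=0$ equals $-(\alpha^2\beta\sqrt{2\lambda})^{-1}\exp(-(x/\alpha + y/\beta)\sqrt{2\lambda})$. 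Since $y + (\beta/\alpha)x > 0$, the same expression is the Laplace transform of $-\alpha^{-2}g(\beta^2 t,\, y + (\beta/\alpha)x)$, so injectivity of the Laplace transform closes the case $\ell = 0$. For $\ell = 1, 2$, the derivatives $\partial_y^\ell g(\beta^2(t-s), y)$ admit uniform Gaussian bounds on compact subsets of $(0, t)$, so $\partial_y^\ell$ may be commuted with $\int_0^t$ by dominated convergence.

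For \eqref{eq:4.6a}, the plan is to exploit the elementary identity $s\,\partial_x g(\alpha^2 s, x) = -\alpha^{-2}x\, g(\alpha^2 s, x)$, which reduces the left-hand side to $-\alpha^{-2}x\int_0^t g(\alpha^2 s, x)\,\partial_y^\ell g(\beta^2(t-s), y)\,ds$. Integrating \eqref{eq:4.6} from $x$ to $\infty$ in the first spatial variable (justified by Fubini and the exponential decay of $g$) and using the change of variable $v = y + (\beta/\alpha)u$ together with the definition of $\bar{\Phi}$ produces
\begin{align*}
\int_0^t g(\alpha^2 s, x)\,\partial_y^\ell g(\beta^2(t-s), y)\,ds = \frac{1}{\alpha\beta}\,\partial_y^\ell \bar{\Phi}(\beta^2 t,\, y + (\beta/\alpha)x).
\end{align*}
Substituting back yields \eqref{eq:4.6a}.

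The main obstacle is purely technical: rigorously justifying the Laplace inversion, the Fubini step, and the differentiation under the integral. All of these reduce to routine applications of dominated convergence using the positivity of $x, y$ and the Gaussian decay of $g$ and its derivatives, so no conceptual difficulty is expected.
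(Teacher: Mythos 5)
Your proposal is correct and follows essentially the same route as the paper: the paper's (very terse) proof likewise rests on the Laplace transform of $s\mapsto s^{-1}g(s,x)$ (equivalently, of $\partial_x g(\alpha^2 s,x)$) together with the convolution theorem for \eqref{eq:4.6}, and on the identity $s\,\partial_x g(\alpha^2 s,x)=-\alpha^{-2}x\,g(\alpha^2 s,x)$ for \eqref{eq:4.6a}. Your only departure is deriving the $\bar\Phi$ identity by integrating \eqref{eq:4.6} in the spatial variable rather than by a second Laplace computation, which is a harmless and clean variation.
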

\begin{proof}
	
	The results follows from simple manipulations using the following
	 Laplace transform for $ x, \, \eta>0 $:
	\begin{align}
	\label{eq:LTg}\mathcal{L}(g_{-1})(\eta):= \int_0^{\infty} e^{-\eta s} g_{-1}(s) \, ds =&x^{-1}e^{-{\sqrt{2\eta x^2}}} \mbox{ with } g_{-1}(s):=s^{-1}g(s,x).
	\end{align}
	
	 In particular, one uses the fact that $s\partial_x g(a^2s,x) = -\frac{x}{a^2}g(a^2s,x)$ in proving \eqref{eq:4.6a}.
\end{proof}


\subsubsection{Proof of Lemma \ref{lem:4.3}}\label{proof:lemma:merging}
The key ingredient to prove \eqref{eq:faim}, that is, the boundary merging of $ \ltheta^e_{i+1} $ and  $ \ltheta^\partial_{i} $, is  Lemma \ref{lem:7.1}. We divide the proof into several steps.

{\it Step 1: Simplify the statement.}
First, one observes that it is easier to verify \eqref{eq:faim} for each term appearing in $ \ltheta^e_{i+1} $ and $ \ltheta^\partial_i $ in \eqref{eq:defDc}, after proper extraction and localization as in Corollary \ref{cor:2.1}, and then add them together. Therefore proving the following general statement will suffice: For any $ j=0,1,2 $ and real-valued $ f\in \mathscr{C}^0_p(\mathbb{R}^2)$,
\begin{align}
\label{eq:opmerg} 
&\E\big[f(\bar{X}_i,\bar{X}_{{i+1}})\I_{D_{i+1,n}} {\mathcal{I}}_{{{i+1}}}^j(1)\delta_L(\bar{X}_{{i}})(2\rho_i-1)
{\mathcal{I}}_i(1)
\,\big|\,\mathcal{G}_{{i-1}}, \zeta_{i+1},N_T= n\big]\\
 & =\frac{\lambda^{-1}}{\zeta_{i+1}-\zeta_{i-1}}
\E\big[f(L,\bar{X}^{\partial}_{i-1,i+1})
\I_{D^\partial_{i-1,i+1,n}}
a_{i-1}^{-1}
\bar{{\mathcal{I}}}_{i-1,i+1}^{j}(1)
\,\big|\,\mathcal{G}_{{i-1}},\zeta_{i+1},N_T= n\big].\nonumber
\end{align}

%

{\it Step 2: Proof of \eqref{eq:opmerg}. Case $ j=0 $.}
In order to foster the understanding of the proof, let us first consider the case $ j=0 $. The following arguments also gives rise to the definition of the merged boundary process.  

First, observe that as $(2\rho_i-1) \sigma_{i-1} Z_i = L-\bar{X}_{i-1}:=Y_{i-1} $ on the set $\left\{\bar{X}_{i}=L\right\}$, one has 
\begin{align*}
(2\rho_i -1)\delta_{L}(\bar{X}_{i}){\mathcal{I}}_i(1) 
& = \delta_{L}(\bar{X}_{i}) \frac{Y_{i-1}}{a_{i-1} (\zeta_{i}-\zeta_{i-1})}.
\end{align*}
Next, we rewrite the right hand side of \eqref{eq:opmerg}, namely, using the independence of $W$ and $N$, we have
\begin{align*}
&\E\big[f(\bar{X}_i,
\bar{X}_{i+1})
\I_{D_{i+1,n}}
\delta_L(\bar{X}_i)
(2\rho_i-1)
{\mathcal{I}}_i(1)
\,\big\vert\,\mathcal{G}_{i-1},\zeta_{i+1}, \zeta_i,\rho_i,N_T =  n\big]\\
& \quad = \int_L^{\infty}  f(L,z)\frac{Y_{i-1}}{a_{i-1}(\zeta_i-\zeta_{i-1}) }g(a_{i-1}(\zeta_i-\zeta_{i-1}),Y_{i-1}) \, g(a(L)(\zeta_{i+1}-\zeta_{i}),{z-L}) \, dz.
\end{align*}

We now use the following key property: {for a real-valued bounded $\mathcal{G}_{i-1}\times \mathcal{B}(\mathbb{R}^2_+)$-measurable function $h:\Omega\times \rr^2_+\rightarrow \mathbb{R}$,} one has  
\begin{gather}
\E[h(\zeta_{i} , \zeta_{i+1})\,\vert\, \mathcal{G}_{i-1}, \zeta_{i+1},N_T= n] = \E[h(s+U, t)\vert\, \mathcal{G}_{i-1}, \zeta_{i+1},N_T= n]\Big|_{s= \zeta_{i-1},\, t= \zeta_{i+1}}.\label{eq:U}
\end{gather}
\noindent where $U \sim \mathcal{U}(0,t-s)$ is independent of $\mathcal{G}_{i-1}$ and $N$. More precisely, by using the tower property for conditional expectation and \eqref{eq:U}, we obtain
\begin{align*}
&\E\big[ f(\bar{X}_i,
\bar{X}_{i+1})
\I_{D_{i+1,n}}\delta_L(
\bar{X}_i)
(2\rho_i-1)
{\mathcal{I}}_i(1)
\,\big\vert \,\mathcal{G}_{i-1},\zeta_{i+1},\rho_i,N_T = n\big]
\\
& =   \frac{1}{\zeta_{i+1}-\zeta_{i-1}}\int_L^{\infty} dz\int_0^{\zeta_{i+1}-\zeta_{i-1}}f(L,z)\frac{Y_{i-1}}{a_{i-1}s}g(a_{i-1}s,Y_{i-1})g(a(L)((\zeta_{i+1}- \zeta_{i-1})-s),{z-L}) \, ds.
\end{align*}

In order to compute the above time convolution and show that the above conditional expectation can be rewritten using the boundary process $ \bar{X}^\partial $, we apply Lemma \ref{lem:7.1}, in particular \eqref{eq:4.60}, noticing that $\frac{Y_{i-1}}{a_{i-1}s}g(a_{i-1}s,Y_{i-1}) =  \partial_2 g(a_{i-1}s,  |Y_{i-1}|)$, to obtain
\begin{align*}
& \E\big[f(\bar{X}_i,\bar{X}_{{i+1}})\I_{D_{i+1,n}} \delta_L(\bar{X}_{{i}})
(2\rho_i-1)
{\mathcal{I}}_i(1)
\,\big\vert \,\mathcal{G}_{{i-1}}, \zeta_{i+1},\rho_i,N_T = n\big] \\
& =	 \frac{1}{ a_{i-1}(\zeta_{i+1} - \zeta_{i-1})} \int_L^{\infty} f(L,z) g(a(L)(\zeta_{i+1} - \zeta_{i-1}),{z-(L(1-\mu(\bar{X}_{{i-1}})) + \bar{X}_{{i-1}} \mu(
	\bar{X}_{{i-1}}))}) \,dz,
\\
& =   \frac{1 }{ a_{i-1}(\zeta_{i+1} - \zeta_{i-1})} \E\big[f(L,\bar{X}^\partial_{i-1,i+1})
\I_{D^\partial_{i-1,i+1,n}}
 \,\big\vert \,\mathcal{G}_{i-1},\zeta_{i+1},N_T =  n\big].
\end{align*}
This finishes the proof for the case $ j=0$.

{\it Step 3: The general case: $ j=1,2 $.} Using equality \eqref{eq:4.6}, one can similarly prove \eqref{eq:opmerg} in the case  $j = 1,2$. In fact, a useful formula to prove this general case directly from \eqref{eq:4.6} is the following property for $ i\in\mathbb{N}_n $ which was stated in \eqref{eq:link:integral:hermite:pol}.

\begin{align*}
&{{\mathcal{I}}}_{i+1}^j(1)=(-1)^j
\left(g^{-1}\partial_y^jg\right )
(a_i(\zeta_{i+1} - \zeta_{i}),y-\rho_{i+1} \bar{X}_i-(1-\rho_{i+1})(2L-\bar{X}_i))
\Big |_{y=\bar{X}_{i+1}}.
\end{align*}

{\it Step 4: The general case follows by linearity.}
Note that in the general case of \eqref{eq:faim}, we have that $ \ltheta_{i+1}^e $ is given by \eqref{eq:defDc}. This expression can be rewritten using the extraction formula \eqref{eq:exta} as linear combinations of terms of the type $ f_j(\bar{X}_i,\bar{X}_{i+1})\mathcal{I}^j_{i+1}(1) $ for some particular functions $ f_j\in\mathscr{C}(\mathbb{R}^2) $. Therefore the result in \eqref{eq:opmerg} applies. 

One uses again \eqref{eq:exta} for $ \bar{\mathcal{I}} $ in order to rewrite the resulting expressions in a compact form. For this, note that we have conveniently defined $ \bar{\mathcal{D}} $ as the adjoint of $ \bar{\mathcal{I}} $ for which the extraction formulae are satisfied.

Therefore towards obtaining the formulae which appear in the definition of the weights $ \ltheta^{\partial*e} $ one combines linearly \eqref{eq:opmerg} to obtain: 
:
\begin{align}
\label{eq:c3}
&\E\big[f(\bar{X}_{{i+1}})\I_{D_{i+1,n}}\ltheta^e_{{i+1}}\delta_L(\bar{X}_{i})
(2\rho_i-1)
{\mathcal{I}}_i(1)
\big\vert \,\mathcal{G}_{{i-1}}, \zeta_{i+1},N_T =  n \big] = \\
& 2\lambda^{-1}
\E\left[\left.f(\bar{X}^{\partial}_{i-1,{i+1}})
\I_{D^\partial_{i-1,i+1,n}}
\left\{ \frac{1}{ a_{i-1}(\zeta_{i+1} - \zeta_{i-1})} \left(\bar{{\mathcal{I}}}^2_{i-1,i+1 }(\bar{d}^{i+1}_2) +\bar{{\mathcal{I}}}_{i-1,i+1 }
(\bar  c^{i+1}_3)\right)\right\}\right|\mathcal{G}_{{i-1}}, \zeta_{i+1},N_T = n\right].
\nonumber
\end{align}
Here, $\bar c^{i+1}_3$ takes the form
\begin{align*}
\bar  c^{i+1}_3:= b(\bar{X}^\partial_{i-1,i+1})-(2\rho_{i+1}-1)
(a'(\bar{X}^\partial_{i-1,i+1})((2\rho_{i+1}-1)+\sigma'(\bar{X}^\partial_{i-1,i+1})(Z_{i+1}+\Delta Z_i))-a'(L))
\end{align*}
and belongs to $\mathbb{S}_{i-1,i+1}(\bar{X}^\partial)$.

To continue simplifying the above expression and to obtain $\bar d^{i+1}_1$ from $\bar c^{i+1}_3$, note that except for the factor $ (2\rho_{i+1}-1) $ in the coefficient $ \bar{c}_3^{i+1}$ all expressions on the right hand side of the above equality do not depend on $ \rho_{i+1} $. In particular, note that the r.v. $ \bar{X}^\partial_{i-1,i+1} $ does not depend on $ \rho_{i+1} $. Therefore by the symmetry of the Bernoulli r.v. $ 2\rho_{i+1}-1 $, the term $(a'\sigma')(\bar X^\partial_{i-1,i+1})(Z_{i+1} + Z_i) - a'(L)$ in $\bar c_3^{i+1}$ vanishes, leading to the definition of $ \bar{d}_1^{i+1}$.  From here, 
\eqref{eq:faim} follows by multiplying \eqref{eq:c3} by  $ 2\lambda^{-1}(a'(L)-b(L)) $ in order to obtain $ \ltheta^{\partial} $ on the left side of the equation.

 Finally, the equality \eqref{eq:faim:last:interval1} follows by arguments similar to those employed in \eqref{eq:opmerg} for $ j=0 $ and we omit its proof.

{\it Step 5: The time degeneracy estimates.} Using the basic properties for Skorokhod integral, obtained mutatis mutandis from Lemma \ref{lem:6}, one obtains the time degeneracy inequalities as it was done in Lemma \ref{lem:7} using the estimate \eqref{eq:est2}. On the set $\left\{\bar{X}_{i-1}\geq L\right\}$, one obtains that $|\bar{d}_2^{i+1}(L,\bar{X}_{i-1,i+1})|\leq C|Z_{i+1}+Z_{i}|$ so that
\begin{align*}
\I_{D_{i-1,n}}\|\I_{D^\partial_{i-1,i+1,n}}\ltheta^{\partial *j}_{i-1,i+1}\|_{p, i-1}\leq& C(\zeta_{i+1}-\zeta_{i-1})^{-1/2}.
\end{align*} 
This ends the proof of Lemma \ref{lem:4.3}. \qed

One may heuristically understand why the above merged structure appears. Actually, the Dirac delta function $ \delta_L(\bar{X}_i) $ appearing inside the conditional expectation \eqref{eq:opmerg} imposes the approximation process to go from $\bar{X}_{i-1}  $ to $ L $ and then from $ L $ to $ \bar{X}_i $. The two corresponding Gaussian laws on each interval are convolved in time in the conditional expectation through the random time $\zeta_i$. Therefore, using the branching property of Gaussian kernels, stated in Lemma \ref{lem:7.1} in its analytic form\footnote{Clearly, this property is connected with a similar one for Bessel processes as stated in Chapter XI of \cite{revuz}.}, one obtains the results after renormalization of the variances. This is the argument used in the above proof.

\subsubsection{Proof of Lemma \ref{lem:4.3a}}\label{proof:lemma:merging:ibp}
As this proof has many intersections with the proof of Lemma \ref{lem:4.3}, we only indicate the main points.
	Essentially we need to perform two steps. First, one {simplifies (using the reduction formula of Corollary \ref{cor:2.1}) the expression of} $\bar{\theta}_i  $ using the fact that one has $ \delta_L(\bar{X}_i) $ in the expression \eqref{eq:faima} as was done in the proof of \eqref{eq:coeff}. Then one follows the lines of reasoning as in the proof of Lemma \ref{lem:4.3}, except that one needs to use \eqref{eq:4.6a} instead of \eqref{eq:4.6}.	
	
	We remark here that the time degeneration estimate in Lemma \ref{lem:4.3a} improves in comparison with its corresponding estimate in Lemma \ref{lem:4.3} because of the inequalities 
	\begin{align*}
	|\bar{X}_{i-1}-L|\vee |\bar{X}^\partial_{i-1,i+1}-L|\leq& {C}|\bar{X}^\partial_{i-1,i+1}-L(1-\mu(\bar{X}_{i-1}))-\bar{X}_{i-1}\mu(\bar{X}_{i-1})|\leq C|Z_{i+1}+Z_{i}|,\\
	(\bar{\Phi}g^{-1})(a(L)(\zeta_{i+1}-\zeta_{i-1}),Z_{i+1}+Z_{i})\leq& C{ (\zeta_{i+1}-\zeta_{i-1})^{1/2}  },
	\end{align*}	
\noindent where the last inequality follows from { change of variables and} Komatsu's inequality and in the first we assume that $\bar{X}_{i-1},\bar{X}^\partial_{i-1,i+1}\geq L  $. The proofs corresponding to the last interval are easier and follow similar arguments. This concludes the proof of Lemma \ref{lem:4.3a}.\qed

\subsection{Localization and reduction lemmas}
\label{app:ext}
%

The following lemma is the basic result which explains that the expectation of a weight can be simplified thanks to the symmetry of the law of $\bar{X}_i$, for $1\leq i \leq N_T$. For this reason, we call the following result a reduction lemma. This section only uses the results and setting of Section \ref{sec:3a}.
\begin{lem}
	\label{lem:2.1}
	Let $ f \in \mathscr{C}^1_p(\mathbb{R})$ and $(\ell, k, n ) \in \bar{\N} \times \N^2 $. Then, for any $i \in \N_n$, one has 
	\begin{align*}
	& \E\left[f(\bar{X}_{i})\delta_L(\bar{X}_{i})(2\rho_i-1)^\ell{{\mathcal{I}}}_{i}^{k}(1) | \mathcal{G}_{i-1}, T^{n+1}, N_T=n\right] \\
	&\quad  =
	\begin{cases}
	0,\ \text{ if } \ell +k\text{ is odd,}\\
	\E\left[f(\bar{X}_i)\delta_L(\bar{X}_i)(2\rho_i-1)^{k}{{\mathcal{I}}}_{i}^{k}(1)\,|\, \mathcal{G}_{i-1}, T^{n+1}, N_T=n \right],
	\ \text{if } \ell+k\text{ is even.}
	\end{cases}
	\end{align*}
\end{lem}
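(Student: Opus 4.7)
The plan is to reduce everything to a single variable, namely the Bernoulli sign $2\rho_i - 1$, and then use the symmetry of the Gaussian density together with the parity of Hermite polynomials.

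First I would localize on $\{\bar{X}_i = L\}$. From the one-step dynamics \eqref{eq:MCa}, the identity $\bar{X}_i = L$ forces
\[
\sigma_{i-1} Z_i = (2\rho_i - 1)(L - \bar{X}_{i-1}),
\]
as one checks by splitting on $\rho_i \in \{0,1\}$. Plugging this into the representation \eqref{eq:link:integral:hermite:pol}, we obtain on the set $\{\bar{X}_i = L\}$
\[
\mathcal{I}_i^k(1) = (-1)^k \mathcal{H}_k\bigl(a_{i-1}(\zeta_i - \zeta_{i-1}),\, (2\rho_i-1)(L - \bar{X}_{i-1})\bigr).
\]
Since $g$ is symmetric and $\mathcal{H}_k(t, -x) = (-1)^k \mathcal{H}_k(t, x)$ (the Hermite polynomials have definite parity $k\bmod 2$), and since $(2\rho_i-1)^2 = 1$, this gives
\[
\mathcal{I}_i^k(1)\big|_{\bar{X}_i = L} = (-1)^k (2\rho_i-1)^{k} \mathcal{H}_k\bigl(a_{i-1}(\zeta_i - \zeta_{i-1}),\, L - \bar{X}_{i-1}\bigr).
\]

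Next, I would evaluate the conditional expectation by freezing everything $\mathcal{G}_{i-1}$- and $T^{n+1}$-measurable and summing over the only remaining randomness that interacts with $\delta_L(\bar{X}_i)$: the Bernoulli $\rho_i$ and the Gaussian innovation $Z_i$ determining $\bar{X}_i$. Conditionally on $\rho_i$, the law of $\bar{X}_i$ is Gaussian with mean $\rho_i \bar{X}_{i-1} + (1-\rho_i)(2L - \bar{X}_{i-1})$ and variance $a_{i-1}(\zeta_i - \zeta_{i-1})$, and by symmetry of $g$ the density at $L$ is the same for both values of $\rho_i$, namely $g(a_{i-1}(\zeta_i - \zeta_{i-1}), L - \bar{X}_{i-1})$. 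Thus
\begin{align*}
& \E\bigl[f(\bar{X}_i)\delta_L(\bar{X}_i)(2\rho_i-1)^{\ell}\mathcal{I}_i^k(1) \,\big|\,\mathcal{G}_{i-1}, T^{n+1}, N_T = n\bigr] \\
&= f(L)\, g\bigl(a_{i-1}(\zeta_i - \zeta_{i-1}), L - \bar{X}_{i-1}\bigr)(-1)^k \mathcal{H}_k\bigl(a_{i-1}(\zeta_i - \zeta_{i-1}), L - \bar{X}_{i-1}\bigr) \cdot \tfrac{1}{2}\!\!\sum_{\rho_i\in\{0,1\}}\!(2\rho_i-1)^{\ell+k}.
\end{align*}

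The conclusion follows from elementary parity. If $\ell + k$ is odd, the sum $\tfrac12[(-1)^{\ell+k} + 1^{\ell+k}] = 0$, proving the first case. If $\ell + k$ is even, the sum equals $1$, and by the same calculation applied to the right-hand side with $\ell$ replaced by $k$ (where $k + k = 2k$ is automatically even), we obtain exactly the same expression, establishing the asserted identity. There is no real obstacle here beyond bookkeeping; the result is essentially a parity argument made possible by the fact that on the boundary set $\{\bar{X}_i = L\}$ the Gaussian increment factorizes cleanly through $2\rho_i - 1$.
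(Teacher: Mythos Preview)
Your proof is correct and follows essentially the same approach as the paper: both use the Hermite polynomial representation \eqref{eq:link:integral:hermite:pol}, the parity $\mathcal{H}_k(t,-x)=(-1)^k\mathcal{H}_k(t,x)$, and the observation that on $\{\bar X_i=L\}$ the pair $(Z_i,2\rho_i-1)$ takes the symmetric values $\pm(\sigma_{i-1}^{-1}(L-\bar X_{i-1}),1)$ with equal probability. Your argument simply makes this parity computation explicit, while the paper states the key conditional-law fact and leaves the rest to the reader.
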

\begin{proof}
The proof follows by using \eqref{eq:link:integral:hermite:pol} and noting that the Hermite polynomials of even degree are even functions (as functions of the variable $Z_i$) and the fact that, conditional on $\mathcal{G}_{i-1}$, $T^{n+1}$ and $\bar{X}_i = L$, the law of the random vector $(Z_i, 2\rho_{i}-1) $ is a Bernoulli(1/2) r.v. with symmetric values $ \pm (\sigma_{i-1}^{-1}(L-\bar X_{i-1}),1) $ for $i\in \N_n$.
\end{proof}

The above lemma states that any weight for which its leading term satisfies that $\ell+k $ is odd (in the sense of highest order of time degeneration) will have a reduction in its time degeneration. Combining the extraction formula \eqref{eq:exta} with Lemma \ref{eq:exta}, one obtains a result which encompasses extraction, localization (at $L$) and reduction in time degeneration.
\begin{corol}
	\label{cor:2.1} Let $ f \in \mathscr{C}^1_p(\mathbb{R})$ and $(\ell, k,n) \in\N_0 \times \N^2 $. Let $c \in \mathscr{C}^k_b(\mathbb{R})$. Then, for any $i \in \N_n$, one has 
	\begin{align*}
	&\E\left[f(\bar{X}_i)\delta_L(\bar{X}_i)(2\rho_i-1)^{\ell}{{\mathcal{I}}}_{i}^{\ell}(c(\bar{X}_i)) | \mathcal{G}_{i-1}, T^{n+1}, N_T=n\right] \\
	&\quad \quad  =
	\sum_{\substack{{j}=0\\ \ell+k-{j}=even}}^k(-1)^{j}{k\choose {j}}c^{({j})}(L)\E\left[f(\bar{X}_t)\delta_L(\bar{X}_t)(2\rho-1)^{k-j}
	{{\mathcal{I}}}_{i}^{k-{j}}(1) | \mathcal{G}_{i-1}, T^{n+1}, N_T=n\right].
	\end{align*}
\end{corol}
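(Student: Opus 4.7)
The plan is to combine the extraction formula \eqref{eq:exta} with the reduction of Lemma \ref{lem:2.1}. Reading the hypotheses, I will treat the statement with the natural interpretation that the power of $\mathcal{I}_i$ on the left-hand side is $k$ (matching the range of the sum on the right-hand side), while the power of $(2\rho_i-1)$ is $\ell$.

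First, I would invoke the extraction formula \eqref{eq:exta} with $H_1 = 1$ and $H_2 = c(\bar{X}_i)$. Since $\mathcal{D}_i$ acts on smooth r.v.'s as the ordinary derivative with respect to $\bar X_i$, one has $\mathcal{D}^j_i(c(\bar{X}_i)) = c^{(j)}(\bar{X}_i)$, hence
\begin{align*}
\mathcal{I}^k_i\bigl(c(\bar{X}_i)\bigr) = \sum_{j=0}^k (-1)^j \binom{k}{j}\, \mathcal{I}_i^{k-j}(1)\, c^{(j)}(\bar{X}_i).
\end{align*}
Multiplying by $f(\bar{X}_i)\delta_L(\bar{X}_i)(2\rho_i-1)^{\ell}$ and taking the conditional expectation, I would then use the presence of the Dirac mass $\delta_L(\bar{X}_i)$ to replace $c^{(j)}(\bar{X}_i)$ by the constant $c^{(j)}(L)$, which can then be factored out of each conditional expectation. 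This is rigorous in the sense of the conditional law interpretation recalled right after \eqref{eq:IBP} (or equivalently in the Skorokhod/IBP sense of \cite{IW}), since conditionally on $\mathcal{G}_{i-1}, T^{n+1}, N_T = n$ the r.v. $\bar{X}_i$ has a smooth Gaussian density and $c^{(j)}$ is continuous.

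Having done this, each term in the sum takes the form
\begin{align*}
(-1)^j \binom{k}{j} c^{(j)}(L)\,\E\bigl[f(\bar{X}_i)\delta_L(\bar{X}_i)(2\rho_i-1)^\ell \mathcal{I}_i^{k-j}(1)\,\big|\,\mathcal{G}_{i-1}, T^{n+1}, N_T=n\bigr],
\end{align*}
to which Lemma \ref{lem:2.1} applies directly with the pair of exponents $(\ell, k-j)$. The lemma tells me that this conditional expectation vanishes when $\ell + (k-j)$ is odd, and otherwise (when $\ell + (k-j)$ is even, i.e.\ $\ell + k - j$ is even) it equals the same conditional expectation with $(2\rho_i-1)^\ell$ replaced by $(2\rho_i-1)^{k-j}$. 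Substituting this into the expansion yields precisely the right-hand side of the stated identity. There is no real obstacle here: the entire argument is a bookkeeping combination of the two preceding tools, and no additional regularity beyond $c\in\mathscr{C}^k_b$ and the smoothness of the Gaussian kernel is required.
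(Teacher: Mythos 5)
Your proof is correct and is precisely the argument the paper intends: the paper's own justification for this corollary is the single sentence that it follows by combining the extraction formula \eqref{eq:exta} with Lemma \ref{lem:2.1}, which is exactly the expansion--localization--reduction chain you carry out. Your reading of the exponent on $\mathcal{I}_i$ in the left-hand side as $k$ (correcting the statement's apparent typo) is also the only one consistent with the right-hand side, so no further comment is needed.
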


\subsection{Appendix: Results about emergence and reduction of  jumps}
\label{app:jumps}
The first result is used in the proof of the probabilistic representation in Theorem \ref{th:3.2a} and is used to express that time integrals add jumps to the Poisson process. 
\begin{lem}
	\label{lem:cj}
	Let $ n \in \N $ and $ G:\{(t_1,\dots,t_{n+2}): 0<t_1<\dots<t_{n+1}<t_{n+2}:=T\}\rightarrow \mathbb{R}_+ $ be a measurable function such that
	$\E[\int_{\zeta_n}^TG(\zeta_1,\dots,\zeta_{n},s,T)\I_\seq{N_T=n}ds]<\infty  $. Then 
	\begin{align*}
	\E[\int_{\zeta_n}^TG(\zeta_1,\dots,\zeta_{n},s,T)\I_\seq{N_T=n}ds]=\lambda^{-1}
	\E[G(\zeta_1,\dots,\zeta_{n},\zeta_{n+1},T)\I_\seq{N_T=n+1}]
	\end{align*}
\end{lem}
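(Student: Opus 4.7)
The plan is to unfold both sides using the explicit joint law of the Poisson jump times and verify that they match after a change of variable.

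First, I would recall that on the event $\{N_T = n\}$, the vector $(\zeta_1,\dots,\zeta_n)$ has joint density
\[
\P(N_T=n,\;\zeta_1\in dt_1,\dots,\zeta_n\in dt_n)=\lambda^n e^{-\lambda T}\,\mathbf{1}_{A_n}(t_1,\dots,t_n)\,dt_1\cdots dt_n,
\]
where $A_n=\{0<t_1<\cdots<t_n<T\}$. This is standard and follows from the independence of the inter-arrival times $(T_{i}-T_{i-1})$, each $\mathrm{Exp}(\lambda)$-distributed, together with the relation $\{N_T=n\}=\{T_n\le T<T_{n+1}\}$. Using this identity, the left hand side becomes
\[
\E\Big[\int_{\zeta_n}^T G(\zeta_1,\dots,\zeta_n,s,T)\I_{\{N_T=n\}}\,ds\Big]=\lambda^n e^{-\lambda T}\int_{A_n}\!\!\int_{t_n}^T G(t_1,\dots,t_n,s,T)\,ds\,dt_1\cdots dt_n.
\]

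Second, I would perform the analogous computation on the right hand side for $N_T=n+1$, which gives
\[
\lambda^{-1}\E\big[G(\zeta_1,\dots,\zeta_{n+1},T)\I_{\{N_T=n+1\}}\big]=\lambda^n e^{-\lambda T}\int_{A_{n+1}} G(t_1,\dots,t_{n+1},T)\,dt_1\cdots dt_{n+1}.
\]
Renaming $t_{n+1}=s$ and observing that $A_{n+1}=\{(t_1,\dots,t_n,s):(t_1,\dots,t_n)\in A_n,\;t_n<s<T\}$, Fubini's theorem (justified by the assumed integrability) identifies the two expressions. The integrability hypothesis on $G$ ensures all iterated integrals are well-defined and the re-ordering of the integration is legitimate.

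There is no real obstacle here; the statement is essentially a restatement of the ``missing jump'' identity for Poisson point measures, and the only thing to be careful about is the bookkeeping of the simplex and the normalisation constants $\lambda^n$ versus $\lambda^{n+1}$, which is why the factor $\lambda^{-1}$ appears on the right.
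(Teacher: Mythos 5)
Your proof is correct and follows essentially the same route the paper indicates: the paper's one-line argument invokes the law of the Poisson jump times on the simplex (phrased via order statistics of uniforms), which is exactly the identity $\P(N_T=n,\,\zeta_1\in dt_1,\dots,\zeta_n\in dt_n)=\lambda^n e^{-\lambda T}\,\mathbf{1}_{A_n}\,dt_1\cdots dt_n$ that you write out and combine with Tonelli and the decomposition of $A_{n+1}$. Your version simply makes explicit the bookkeeping that the paper leaves to the reader.
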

The proof is straightforward and follows by rewriting the above expectations using that the conditional law of the jump times of a Poisson process given the number of jumps follows the same law as the order statistics for i.i.d. uniformly distributed r.v.'s. 

The next result is used for the reduction of jumps after the boundary merging procedure. For this reason we called it the time merging lemma.
\begin{lem}
	\label{lem:Unif}
	Let $n\in \N$. Let $ G:\{(t_1,...,t_{n}): 0<t_1<\dots<t_{n}<T\}\rightarrow \mathbb{R} $ be a measurable function such that for all $i \in \N_n$, $\E[(\zeta_{i+1}-\zeta_{i-1})^{-1}G(\zeta_1,\dots,\zeta_{i-1},\zeta_{i+1},\dots,\zeta_{N_T+1})\I_\seq{N_T=n}]<\infty  $. Then, for any $i \in \N_n$, one has
	\begin{align*}
	\E[(\zeta_{i+1}-\zeta_{i-1})^{-1}G(\zeta_1,\cdots,\zeta_{i-1},\zeta_{i+1},\dots,\zeta_{N_T+1})\I_\seq{N_T=n}]=\lambda\E[G(\zeta_1,\dots,\zeta_{N_T+1})\I_\seq{N_T=n-1}].
	\end{align*}
\end{lem}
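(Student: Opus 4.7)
The plan is to reduce the identity to an elementary computation about order statistics of uniform random variables by conditioning on $\{N_T=n\}$. Recall that the joint law of $(\zeta_1,\ldots,\zeta_n,\I_{\seq{N_T=n}})$ has density
\[
\lambda^n e^{-\lambda T}\,\I_{A_n}(t_1,\ldots,t_n)\,dt_1\cdots dt_n,
\]
with the convention $\zeta_0=0$, $\zeta_{n+1}=T$. Thus the left-hand side can be written as
\[
\lambda^n e^{-\lambda T}\int_{A_n} (t_{i+1}-t_{i-1})^{-1}\,G(t_1,\ldots,t_{i-1},t_{i+1},\ldots,t_n,T)\,dt_1\cdots dt_n.
\]

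The key observation is that, since the integrand depends on the $t_j$'s only through the coordinates with $j\neq i$, and since conditionally on $(t_{i-1},t_{i+1})$ the variable $t_i$ ranges freely in $(t_{i-1},t_{i+1})$, Fubini yields
\[
\int_{t_{i-1}}^{t_{i+1}} (t_{i+1}-t_{i-1})^{-1}\,dt_i = 1.
\]
After carrying out this one-dimensional integration, the remaining integral runs over
\[
\{0<t_1<\cdots<t_{i-1}<t_{i+1}<\cdots<t_n<T\},
\]
which, upon relabelling $s_j=t_j$ for $j<i$ and $s_j=t_{j+1}$ for $i\leq j\leq n-1$, is exactly the simplex $A_{n-1}$. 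Extracting one factor of $\lambda$, the left-hand side becomes
\[
\lambda\cdot\lambda^{n-1}e^{-\lambda T}\int_{A_{n-1}} G(s_1,\ldots,s_{n-1},T)\,ds_1\cdots ds_{n-1}
=\lambda\,\E\big[G(\zeta_1,\ldots,\zeta_{N_T+1})\I_{\seq{N_T=n-1}}\big],
\]
where in the last equality we recognise the joint density of $(\zeta_1,\ldots,\zeta_{n-1},\I_{\seq{N_T=n-1}})$ and use the convention $\zeta_{N_T+1}=T$ on that event. The integrability hypothesis permits the use of Fubini throughout. No step presents a real difficulty; the only point to watch is the relabelling and the bookkeeping of boundary conventions $\zeta_0=0$ and $\zeta_{n+1}=T$ so that the arguments of $G$ match on both sides.
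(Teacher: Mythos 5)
Your proposal is correct and follows essentially the same route as the paper: the paper's (very terse) proof invokes the conditional uniformity of $\zeta_i$ on $[\zeta_{i-1},\zeta_{i+1}]$ given the other jump times together with the identity $n\P(N_T=n)=\lambda T\,\P(N_T=n-1)$, and your explicit computation with the joint density $\lambda^n e^{-\lambda T}\I_{A_n}$, the Fubini integration of $t_i$ against $(t_{i+1}-t_{i-1})^{-1}$, and the relabelling onto $A_{n-1}$ is exactly that computation written out in full.
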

\begin{proof}
	The proof follows from standard computations having at hand the two following important facts: conditionally on the event $\left\{ N_T=n , \zeta_1,\dots,\zeta_{i-1},\zeta_{i+1},\dots.,\zeta_{N_T} \right\}$, the distribution of $ \zeta_i $ is uniform on $ [\zeta_{i-1},\zeta_{i+1}] $ and $ n\P(N_T=n)=\lambda T\P(N_T=n-1)$. 
\end{proof}

\begin{remark}\label{remark12}
The above result will be used in the proof of Theorem \ref{prop:5.2} for $ i\in\mathbb{N}_n $ on the set $ \{N_T=n\} $ for some measurable functions $G$ of the following form
\begin{align*}
G(\zeta_1,\dots,\zeta_{i-1},\zeta_{i+1},\dots,\zeta_{N_T})= F(\bar{X}^{\partial}_{i-1, i+1}, \zeta_{i+1}, \cdots, \zeta_{n+1}) \times \I_{D^{\partial}_{i-1,i+1, n}}\ltheta_{i-1,i+1}^{a_i} \times \prod_{j=1}^{i-1} \I_{D_{j, n}} \bar \theta_j
\end{align*}
\noindent for some symbols $ a_i \in \left\{ \partial * e , \partial \circledast  e\right\} $ and for some measurable function $F(x, s_{i+1}, \cdots, T)$ 
\end{remark}

\end{document}